	\theoremstyle{plain}
		\newtheorem{mainthm}{\textsc{Theorem}}		
		\newtheorem{thm}{Theorem}[section]	
		\newtheorem{cor}[thm]{Corollary}	
		\newtheorem{maincor}{\textsc{Corollary}}		%
		\newtheorem{lem}[thm]{Lemma}		
		\newtheorem{prop}[thm]{Proposition}
	\theoremstyle{definition}
		\newtheorem{defn}[thm]{Definition}	
	\theoremstyle{remark}
		\newtheorem{rem}[thm]{Remark}		
		\newtheorem{note}[thm]{Notation}		
\numberwithin{equation}{section}	
\newcommand{\GL}{\mathrm{GL}}
\newcommand{\trasp}[1]{{#1}^\mathsf{T}}	
\newcommand{\EE}{\mathbf{E}}		
\newcommand{\traspm}[1]{{#1}^\mathsf{-T}}		
\newcommand{\iMor}{\mathrm{n_-}}		
\newcommand{\iMorse}{\iota_{\scriptscriptstyle{\mathrm{Mor}}}}
\newcommand{\R}{\mathbb{R}}	
\newcommand{\C}{\mathbb{C}}		
\newcommand{\T}{\mathbb{T}}		
\newcommand{\U}{\mathbb{U}}		
\newcommand{\OO}{\mathrm{O}}		
\newcommand{\spec}{\sigma}		
\newcommand{\SO}{\mathrm{SO}}
\newcommand{\irel}{I}		
\newcommand{\ispec}{\iota_{\textup{spec}}}				
\newcommand{\igeo}{\iota_{\textup{geo}}}		
\newcommand{\Sp}{\mathrm{Sp}}
\newcommand{\Lagr}{\Lambda}
\newcommand{\Ddt}{\nabla_t}
\newcommand{\dt}{\tfrac{\mathrm{d}}{\mathrm{d}t}}
\newcommand{\rie}[2]{\langle{#1}, {#2} \rangle_g}
\newcommand{\Graph}{\mathrm{Gr\,}}
\newcommand{\Gr}{\mathrm{Gr}}
\newcommand{\im}{\mathrm{rge}\,}
\newcommand{\ind}{\mathrm{ind}\,}
\newcommand{\norm}[1]{\left\| #1 \right\|}			
\newcommand{\traspinv}[1]{{#1}^\mathsf{-T}}				%
\newcommand{\coiMor}{\coindex}
\newcommand{\M}{\mathcal{M}}	%
\newcommand{\N}{\mathbb{N}}		
\newcommand{\iCLM}{\iota_{\scriptscriptstyle{\mathrm{CLM}}}}
\newcommand{\iomega}[1]{\iota_{#1}}
\newcommand{\Z}{\mathbb{Z}}		
\newcommand{\coindex}{\mathrm{n_+}}
\newcommand{\iiindex}{\mathrm{n_-}}
\newcommand{\noo}[1]{\overset {\mbox{%
\lower1pt\hbox{${\scriptscriptstyle o}$}}}n^{\mbox{%
\lower2pt\hbox{$\scriptscriptstyle #1$}}}}
\newcommand{\ptl}{P(L)}
\newcommand{\pth}{P(H)}
\DeclareMathOperator{\spfl}{sf}			
\DeclareMathOperator{\sgn}{sgn}		
\DeclareMathOperator{\rk}{rank}		
\renewcommand{\leq}{\leqslant}
\renewcommand{\geq}{\geqslant}
\renewcommand{\tilde}{\widetilde}
\renewcommand{\=}{\coloneqq}			
\newcommand{\email}[1]{\href{mailto:#1}{\textsf{#1}}}
\newcommand{\Id}{I}
\title{Linear instability  for  periodic orbits of non-autonomous Lagrangian systems}
\author{Alessandro Portaluri
\thanks{The
author is partially supported by Prin 2015 ``Variational methods, with applications to
problems in mathematical physics and geometry” No.~$\mathrm{2015KB9WPT\_001}$.}, Li Wu, Ran Yang }
\date{\today}
\date{\today}
\begin{document}
 \maketitle

\begin{abstract}

Inspired by the classical Poincaré criterion about the instability of  orientation preserving minimizing closed geodesics on surfaces,  we investigate  the relation intertwining the instability and the variational properties of periodic solutions of a non-autonomous Lagrangian  on a finite dimensional Riemannian  manifold.

We establish a general criterion for a priori detecting  the linear instability of a periodic orbit on a Riemannian manifold for a (maybe not Legendre convex) non-autonomous Lagrangian simply by looking at the parity of the spectral index, which is the right substitute of the Morse index in the framework of strongly indefinite variational problems and defined in terms of the spectral flow of a path of Fredholm quadratic forms on a Hilbert bundle.

\vskip0.2truecm
\noindent
\textbf{AMS Subject Classification: 58E10, 53C22, 53D12, 58J30.}
\vskip0.1truecm
\noindent
\textbf{Keywords:} Periodic orbits, Non-autonomous Lagrangian functions, Linear instability,  Maslov index, Spectral flow.
\end{abstract}


\section{Introduction }\label{sec:intro}

A celebrated result proved by Poincaré in the beginning of the last century put on evidence the relation between  the (linear and exponential) instability of an orientation preserving  closed geodesic as a critical point of the geodesic energy functional on the free loop space on a surface and the minimization property of such a critical point. The literature on this criterion is quite broad. We refer the interested reader to \cite{BJP14, BJP16, HPY19}  and references therein. Loosely speaking, a closed geodesic  is termed linearly stable if the monodromy matrix associated to it  splits into two-dimensional rotations. Accordingly, it is diagonalizable and all Floquet multipliers belong to the unit circle of the complex plane. Moreover, if $1$ is not a Floquet multiplier, we term the geodesic non-degenerate. In 1988 the Poincaré instability criterion for closed geodesics was generalized in several interesting directions by Treschev in \cite{Tres88, BT15} and references therein.

Several years later, the authors in \cite{HS10} proved a generalization of the aforementioned result, dropping the non-degeneracy assumption. Very recently, authors  in \cite{HPY19}, by using a mix of variational and symplectic techniques, were able to establish, among others,  a general criterion in order to detect the (linear) instability of a closed geodesic on a finite dimensional semi-Riemannian manifold, by controlling the parity of an integer which naturally replace, in this setting, the classical Morse index (which is, in general, infinite). In fact, in the semi-Riemannian (not Riemannian world) the critical points of the geodesic energy functional have, in general, an infinite Morse index and co-index. However, in this strongly indefinite situation a natural substitute of the Morse index is represented by a topological invariant known in literature as the {\em spectral flow\/}.

A natural question (which somehow was the main motivation for writing this paper) is how the results proved by authors in \cite{HPY19} can be carried over in the case of non-autonomous (not Legendre convex) Lagrangian functions on Riemannian manifolds.  

Dropping the Legendre convexity, is quite a challenging task and somehow reflect the sort of indefiniteness behavior  appearing in the truly semi-Riemannian world, but it is crucial in the investigation of many indefinite variational problems naturally arising in the applications. In fact, as already observed, critical points have in general infinite Morse index and co-index. However, as in the (classical) Legendre convex case, under some suitable growing  assumption on the Lagrangian function,  the second variation gives arise to a Fredholm (strongly indefinite) quadratic form. Taking advantage of this,  we associate to each periodic orbit and index called {\em spectral index\/} $\ispec$ and defined in terms of the  {\em spectral flow\/}. 

 In general the spectral flow depends on the homotopy class of the whole path and not only on its ends. However in the special case of geodesics on semi-Riemannian manifolds, things are simpler since it depends only on the endpoints of the path and therefore it can be considered a relative form of Morse index known in literature as relative Morse index.
 
It is well-known that this invariant is strictly related to a symplectic invariant known in literature as Maslov index (which is an intersection invariant constructed in the Lagrangian Grassmannian manifold of a symplectic space) that plays a crucial role in detecting the stability properties of a periodic orbit.  (We refer the interested reader to see \cite{APS08, MPP05, MPP07, HS09, Por08, RS95} and references therein). Very recently, new spectral flow formulas has been established and applied in the study, for detecting bifurcation of heteroclinic and homoclinic orbits of Hamiltonian systems or bifurcation of semi-Riemannian geodesics. (Cfr. \cite{PW16,MPW17, BHPT19, HP17, HP19, HPY17}).

As in the geodesic case the stability as well as the spectral index both  {\em strongly depend\/} on the trivialization of the pull-back bundle along the orbit. This is not because we are dealing with not Legendre convex Lagrangian functions or on semi-Riemannian manifold or even because we are dealing with orientation and not-orientation preserving  (maybe degenerate) periodic orbits. This very much depends on the topological property of the manifolds. A first attempt to find a general criterion which is independent on the chosen trivialization, could be to restrict to contractible periodic orbits. In this way it is possible to construct a trivialization of the pull-back of the tangent bundle along the orbit by filling in a disk. However, as one can expect, this is not enough to solve the problem! This in fact happens precisely if we are in the presence of spheres $\iota: S^2 \hookrightarrow M$ with non-vanishing first Chern class.  Thus,  if the first Chern class $c_1(M)$ has vanishing integral on every $2$-sphere of M, (namely on the second homotopy group of $M$) then certainly the geometrical index will be independent on such a trivialization, since in this case any two trivializations in the chosen  class differ by a loop in the symplectic group which is homotopically trivial. However, the stability properties of a periodic orbit are somehow more sensitive on the trivialization depending not in a homotopically theoretical way on the spectrum of the monodromy matrix.


\subsection{Description of the problem and main result}

Let $T>0$, $(M,\rie{\cdot}{\cdot})$ be a (not necessarily compact or connected) smooth $n$-dimensional Riemannian manifold without boundary, which represents the configuration space of a Lagrangian dynamical system.  Elements  in the tangent bundle $TM$ will be  denoted by $(q,v)$, with $q \in M$ and  $v \in T_qM$. Let $\T\subset \R^2$ denote a circle in $\R^2$ of length $T=|\T|$ which can be identified with $\R/T\Z$, where $T\Z$ denotes the lattice generated by $T \in \R$ and  
 Let  $L: \T \times TM\to \R$  be a smooth non-autonomous (Lagrangian) function  satisfying the following assumptions
\begin{itemize}
\item[{\bf(N1)\/}] $L$ is  non-degenerate on the fibers of $TM$, that is, for every $(t,q,v)\in \T \times TM$ we have that  $\nabla_{vv}L(t,q,v)\neq 0$ is non-degenerate as a quadratic form
\item[{\bf(N2)\/}] $L$ is {\em exactly quadratic\/}  in the velocities meaning that the function $ L(t,q,v)$ is a polynomial of degree at most $2$ with respect to $v$.
\end{itemize}
\begin{rem}
 Before stating our main result, we observe that we require that the Lagrangian function is {\em exactly quadratic\/} in the velocity assumption (N2). The smoothness property of $L$ are in general not sufficient for guaranteeing the twice Fréchét differentiability of the Lagrangian action functional. In fact, the functional it is twice Fréchét differentiable if and only if $L$ is {\em exactly\/} quadratic in the velocities (meaning that for every $(t,q)$ the function $v\mapsto L(t,q,v)$ is a polynomial of degree at most $2$). In this case, the Lagrangian action functional is actually smooth. This fact it depends upon the differentiability properties of the Nemitsky operators and it is very well explained in \cite{AS09}.
 \end{rem}

\begin{mainthm}\label{thm:instability theorem for non-autonomous case}
Let $(M,\rie{\cdot}{\cdot})$ be a $n$-dimensional Riemannian manifold,  $L: \T \times TM\to \R$  be a smooth non-autonomous (Lagrangian) function  satisfying the following assumptions (N1) and (N2) and let  $x$ be a $T$-periodic orbit of the (associated) Lagrangian system. If one of the following two alternatives hold
\begin{itemize}
\item[]{\bf (OR)} $x$   is  orientation preserving and $\ispec(x) +n$   is odd
\item[]{\bf (NOR)} $x$   is non orientation preserving and $\ispec(x) +n$  is even
\end{itemize}
then $x$ is linearly unstable.
\end{mainthm}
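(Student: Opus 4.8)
The plan is to translate the variational data attached to the periodic orbit $x$ into symplectic data at the level of the linearized flow, and then apply a parity/stability argument to the monodromy matrix. First I would linearize the Lagrangian system along $x$: the second variation of the Lagrangian action at $x$ is a Fredholm quadratic form on the Hilbert space of (suitably periodic) vector fields along $x$, whose associated self-adjoint operator is a Morse–Sturm type operator $\mathcal L$ obtained from $\nabla_{vv}L$, $\nabla_{qv}L$, $\nabla_{qq}L$ along $x$, after trivializing the pull-back bundle $x^*TM$. By (N1) the leading coefficient $\nabla_{vv}L$ is non-degenerate (possibly indefinite), and by (N2) this operator has the genuine Morse–Sturm form so that the second variation is twice Fréchet differentiable and Fredholm; the spectral index $\ispec(x)$ is defined as the spectral flow of the associated path of quadratic forms.

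Next I would identify $\ispec(x)$ (up to a correction term involving $n$ and the chosen trivialization) with a Maslov-type index of the path of Lagrangian subspaces generated by the fundamental solution of the linearized Hamilton equations — this is exactly the spectral-flow = Maslov-index correspondence referenced in the introduction (\cite{APS08, MPP05, MPP07, HS09, Por08, RS95} and the recent spectral flow formulas \cite{PW16, MPW17, BHPT19, HP17, HP19, HPY17}). Concretely, write $\psi(t)\in \Sp(2n,\R)$ for the linearized flow, so that the monodromy matrix is $\MM = \psi(T)$; the relevant index counts, with sign, the intersections of $\mathrm{Gr}(\psi(t))$ with the diagonal (in the orientation preserving case) or with the "twisted diagonal" corresponding to the gluing map (in the non orientation preserving case). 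The parity of this Maslov index is controlled by the sign of $\det(\MM - \Id)$ in the (OR) case and $\det(\MM + R)$, for the appropriate reflection $R$, in the (NOR) case — this is the standard Bott-type parity formula: the mod-$2$ Maslov index equals the mod-$2$ count of eigenvalue $1$ (resp. $-1$) crossings, which in turn is governed by the sign of the corresponding determinant.

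The final step is the stability dichotomy. If $x$ were linearly stable, the monodromy matrix $\MM$ would be conjugate to a product of planar rotations (all Floquet multipliers on the unit circle, $\MM$ diagonalizable over $\C$); in particular $\MM$ would lie in the identity component of the relevant stabilizer and one computes directly that $\det(\MM - \Id) \geq 0$ in the orientation preserving case, forcing $\ispec(x) + n$ to be even, and $\det(\MM + R) \geq 0$ in the non orientation preserving case, forcing $\ispec(x) + n$ to be odd. Contrapositively, the parity hypotheses (OR) or (NOR) are incompatible with stability, hence $x$ is linearly unstable. The main obstacle I expect is the bookkeeping of the trivialization-dependent correction terms: as emphasized in the introduction, both $\ispec(x)$ and the sign of the relevant determinant depend on the chosen trivialization of $x^*TM$, and one must check that the \emph{parity} of $\ispec(x)+n$ combined with the orientation type is trivialization-independent (equivalently, that changing the trivialization by a loop in $\Sp(2n,\R)$ shifts $\ispec(x)$ by an even integer in the relevant situations, or that the orientation-type dichotomy exactly absorbs the odd part). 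Getting this compatibility precise — and handling the degenerate case, where $1$ (resp. $-1$) is a Floquet multiplier, via a perturbation argument on the crossing forms — is where the real work lies.
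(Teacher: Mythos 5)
Your plan follows the paper's strategy at every key junction: trivialize, translate $\ispec(x)$ into a Maslov-type intersection index via a spectral-flow/Maslov correspondence, and then use the position of the (twisted) monodromy matrix inside $\Sp(2n,\R)$ to control the parity. But two of the places you treat as routine are in fact where the paper has to work hardest, and one of them is stated in a form that, as written, would not close.

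The ``correction term involving $n$ and the chosen trivialization'' is exactly $\dim\ker(A-\Id)$, where $A\in\OO(n)$ is the orthogonal gluing matrix of the parallel frame, and the identity $\igeo(x) = \ispec(x) + \dim\ker(A-\Id)$ is the whole content of the paper's Propositions on Sturm--Liouville operators with constant and non-constant principal symbol. This is obtained by homotoping $P(t)$ to a constant symmetric matrix (a delicate step, since Kato's selection theorem obstructs a naive diagonalization) and reducing to the semi-Riemannian geodesic calculation. It is then the elementary but essential observation that $\det A = \pm 1$ forces $n - \dim\ker(A-\Id)$ to be even (OR) or odd (NOR) that converts orientation type into the stated parity of $\ispec(x)+n$; your proposal never surfaces this step, yet without it the dichotomy (OR)/(NOR) does not connect to anything. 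On the stability side, ``$\det(\MM - \Id)\geq 0$'' is too weak as stated: the determinant can vanish, and the singular cycle $\Sp(2n,\R)^0$ is precisely what you must avoid. What is actually needed is that $e^{\pm\delta J}\MM\in\Sp(2n,\R)^+$ for a linearly stable $\MM$ (the paper's Lemma on linearly stable matrices), together with the same statement for the \emph{initial} endpoint $A_d$ of the symplectic path (which is a direct computation from the orthogonality of $A$); only with both endpoints in $\Sp(2n,\R)^+$ does Long's parity lemma for the $\iomega{1}$-index apply, and only after invoking the Long--Zhu identification $\iCLM = \iomega{1}$ does this give $\igeo(x)$ even. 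The $e^{-\varepsilon J}$ perturbation is thus structural, not a patch for the degenerate case. Finally, you misidentify the main obstacle: trivialization-independence of the parity is not an issue once a fixed orthonormal parallel frame is chosen (as the paper does); the real difficulty is computing the Maslov index of the reference path $\psi_{0,s_0}$ for large $s_0$.
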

We let
 \begin{itemize}
\item[{\bf(L1)\/}] $L$ is  $\mathscr C^2$ strictly convex  on the fibers of $TM$, that is, for every $(t,q,v)\in \T \times TM$ we have that  $\nabla_{vv}L(t,q,v)> 0$ as a quadratic form
\end{itemize}
As a direct corollary of the previous result, we get. 
\begin{maincor}\label{cor:instability for P positive}
Let $(M,\rie{\cdot}{\cdot})$ be a $n$-dimensional Riemannian manifold,  $L: \T \times TM\to \R$  be a smooth non-autonomous (Lagrangian) function  satisfying the following assumptions (L1) and (N2) and let  $x$ be a $T$-periodic orbit of the (associated) Lagrangian system. If one of the following two alternatives hold
\begin{itemize}
\item[]{\bf (OR)} $x$   is  orientation preserving and $\iMorse(x) +n$   is odd
\item[]{\bf (NOR)} $x$   is non orientation preserving and $\iMorse(x) +n$  is even
\end{itemize}
then $x$ is linearly unstable.
\end{maincor}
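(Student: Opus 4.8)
The plan is to obtain Corollary~\ref{cor:instability for P positive} as a specialization of Theorem~\ref{thm:instability theorem for non-autonomous case}, the only point to verify being that the Legendre convexity assumption (L1) forces the spectral index and the (classical) Morse index of $x$ to have the same parity. First I would note that (L1) makes the fiberwise Hessian $\nabla_{vv}L(t,q,v)$ positive definite at every $(t,q,v)$; together with (N2) this guarantees (as recalled in the Remark above and in the quoted differentiability results for Nemitsky operators) that the Lagrangian action functional is smooth and that its second variation at the $T$-periodic orbit $x$ is a Fredholm quadratic form on the $H^1$ loop space. Fixing a $T$-periodic trivialization of $x^*TM$, the corresponding index form $\II_x$ acts as $\II_x[\xi]=\int_0^T\big(\rie{P(t)\,\dot\xi}{\dot\xi}+\dots\big)\,\d t$ with $P(t)=\nabla_{vv}L\big(t,x(t),\dot x(t)\big)\geq c\,\Id>0$ uniformly in $t$, so a G\r{a}rding-type estimate shows that $\II_x$ is the sum of a coercive quadratic form and a weakly continuous (hence compact) one. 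In particular the essential spectrum of the associated self-adjoint operator is contained in a half-line $[c',+\infty)$ with $c'>0$, whence $\II_x$ has \emph{finite} Morse index $\iMorse(x)=\nind{\II_x}$ and finite nullity; this is precisely what makes the statement of the Corollary meaningful.

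Next I would use the definition of $\ispec(x)$ as the spectral flow of a path of Fredholm quadratic forms joining a fixed reference form $R$ of vanishing Morse index to $\II_x$ — which, after the trivialization above, can be realized by the affine homotopy $s\mapsto(1-s)R+s\,\II_x$. Every member of this homotopy is again of the form ``coercive $+$ compact'', so along the path no eigenvalue can accumulate at the origin or escape to $-\infty$, and the net count of eigenvalues crossing $0$ is controlled entirely by the negative spectrum of $\II_x$. Hence $\ispec(x)$ equals $\iMorse(x)$ up to sign and up to the contribution of the kernels of $R$ and $\II_x$ at the endpoints — a correction fixed by whatever normalization of the spectral flow is adopted in the paper. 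Since only the parity of $\ispec(x)+n$ enters Theorem~\ref{thm:instability theorem for non-autonomous case}, what I actually need from this discussion is the congruence $\ispec(x)\equiv\iMorse(x)\pmod 2$; substituting it into the alternatives (OR) and (NOR) turns the hypotheses of the Theorem into those of the Corollary verbatim, and the conclusion (linear instability of $x$) is the same, so the Corollary follows.

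The hard part will be the bookkeeping in the middle step, namely matching the sign convention and the endpoint-kernel convention built into the paper's definition of $\ispec$ with the analogous conventions for $\iMorse$, so as to be certain that the correction term does not flip the parity of $\ispec(x)-\iMorse(x)$; because the Corollary concerns parities only, this is a matter of care rather than of substance. I would also point out — to forestall a possible worry — that the orientation-preserving versus non-orientation-preserving dichotomy is literally the same in the two statements: it is a property of the linearized Poincaré (monodromy) map of $x$, and has nothing to do with whether $L$ is Legendre convex, so that part of the hypotheses needs no translation.
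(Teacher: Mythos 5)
Your proposal is correct and follows essentially the same route as the paper: specialize Theorem~\ref{thm:instability theorem for non-autonomous case} by showing that under (L1) the spectral index has the same parity as the classical Morse index. The only thing worth pointing out is that the paper dispenses with the bookkeeping you defer by proving the exact equality $\ispec(x)=\iMorse(x)$ (Proposition~\ref{thm:spectral-index-morse-index}): under (L1) the specific path $s\mapsto\mathcal Q_s=d^2\mathbb E(x)+s\,\alpha(x)$ defining $\ispec(x)$ consists entirely of essentially positive Fredholm quadratic forms with $\mathcal Q_{s_0}$ positive definite, so by Proposition~\ref{thm:sf-differenza-morse} the spectral flow equals $\iMorse(\mathcal Q_0)-\iMorse(\mathcal Q_{s_0})=\iMorse(x)-0$, and no mod-$2$ or endpoint-kernel correction is needed (also note that the path runs from the Hessian to the positive-definite form, the opposite direction from your $R\to\II_x$, which would only flip the sign and hence would not matter for parity anyway).
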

This result is a straightforward corollary of Theorem \ref{thm:instability theorem for non-autonomous case} and in fact follows by observing that under the assumption (L1) the spectral index reduces to the classical Morse  index. 
\begin{rem}
	Here we would like to emphasize on a couple of remarks. First of all, in the special case in which the configuration space $M$ is just the Euclidean space $\R^n$, the proof of Theorem \ref{thm:instability theorem for non-autonomous case} it is much easier simply because no trivializations are involved in this case. The reader should be aware on the crucial importance of the trivialization that somehow encodes the geometrical part of the problem. We should note in fact that the second variation (and hence the spectrum of the monodromy matrix and consequently the linear stability) will be strongly influenced upon the choice of this trivialization. (Cfr. \cite[Appendix]{AF07}).
	Even the relation between the Morse index and the Conley Zehnder index, as already discussed, clearly depends upon the chosen trivialization. Moreover, if the Lagrangian function is Legendre convex and the critical point is non-degenerate, then the spectral index reduces to the classical Morse index which is easier to be treated. 
	
	The second remark is about on the which seems completed overlooked in many papers, is related to the growth assumptions with respect to the velocity $v$ of the Lagrangian function. In fact, is precisely under assumption (N1) on the smooth Lagrangian $L$ that the Lagrangian action functional is $\mathscr C^2$ (actually smooth). This fact relies on the differentiability of the Nemitski operators and it is crucial in order to associate a Morse index (or a spectral index in the strongly indefinite case) to a critical point. 
			\end{rem}
The paper is organized as follows

\tableofcontents

\subsection*{Notation}
	At last, for the sake of the reader, let us introduce some common notation that we shall use henceforth throughout the paper. 
\begin{itemize}
\item $(M,\rie{\cdot}{\cdot})$ denotes a Riemannian manifold without boundary; $TM$ its  tangent bundle and  $T^*M$ its cotangent bundle.
\item $\Lambda^1(M)$ is the Hilbert manifold of loops on manifold $M$ having Sobolev regularity $H^1$
	\item $\omega$ denotes the symplectic structure $J$ the standard  symplectic matrix 
	\item $\iMorse(x)$ stands for the Morse index of $x$, $\ispec(x)$ for the spectral index of $x$, $\igeo(x)$ for the geometrical index index of $x$, $\iomega{1}$ denotes the Maslov-type index or Conley-Zehnder index of a symplectic matrix path, $\iCLM$ denotes the Maslov (intersection) index and finally $\spfl$ denotes the  spectral flow
	\item $P(L)$ denotes the set of $T$-periodic solutions of the Euler-Lagrange Equation ; $P(H)$ denotes  the set of $T$-periodic solutions of the Hamiltonian Equation
	\item $\delta_{ij}$ is the Kronecker symbol.  $\Id_X$ or just $\Id$ will denote the identity operator on a space $X$ and we set for simplicity  $\Id_k := \Id_{\R^k}$ for $k \in \N$.  $\Gr(\cdot)$ denotes the  graph of  its argument; $\Delta$ denotes the graph of identity matrix $\Id$
	\item $\U$ is  the unit circle of the complex plane
	\item $\OO(n)$ denotes the orthogonal group $\Sp(2n,\R)$ or just $\Sp(2n)$ denotes the $2n\times 2n$ real symplectic group
	\item $\mathfrak P$ denotes the  linearized Poincar\'{e} map
	\item $\mathcal{BF}^{sa}$ denotes  the set of all bounded	selfadjoint Fredholm operators,  $\sigma(\cdot )$ denotes the spectrum of the operator in its argument
	\item We denote throughout by the symbol $\trasp{\cdot}$ (resp. $\traspinv{\cdot}$) the transpose (resp. inverse transpose) of the operator $\cdot$. Moreover $\im(\cdot), \ker(\cdot)$ and $\rk(\cdot)$ denote respectively the image, the kernel and the rank of the argument
		\item $\Gamma$ denotes the crossing form and  $\coiMor$/$\iMor$ denote respectively the dimensions  of the positive/negative  spectral  spaces and finally $\sgn(\cdot)$ is  the  signature of the quadratic form (or matrix) in its argument and it is  given by $\sgn(\cdot)=\coiMor(\cdot)-\iMor(\cdot)$ 
	\end{itemize}

\subsection*{Acknowledgements}
The first name author wishes to thank all faculties and staff of the Mathematics Department at the Shandong University (Jinan) as well as the Department of Mathematics and Statistics at the Queen's University (Kingston)  for providing excellent working conditions during his stay.


\section{Lagrangian dynamics and Variational framework}\label{sec:variational-framework}

This section is devoted to fix our basic notation about the Lagrangian and Hamiltonian dynamics and to set up the  variational framework of the problem. 


\subsection{Lagrangian and Hamiltonian dynamics}\label{subsec:L-H-dynamics}

Let $T>0$, $(M,\rie{\cdot}{\cdot})$ be a (not necessarily compact or connected) smooth $n$-dimensional Riemannian manifold without boundary, which represents the configuration space of a Lagrangian dynamical system and we denote by $\norm{\cdot}$ the Riemannian norm.   Elements  in the tangent bundle $TM$ will be  denoted by $(q,v)$, with $q \in M$ and  $v \in T_qM$. The metric $\rie{\cdot}{\cdot}$  induces a metric on $TM$, Levi-Civita connections both on $M$ and $TM$ as well as the isomorphisms 
\begin{equation}
	T_{(q,v)}TM= T_{(q,v)}^hTM\oplus T_{(q,v)}^vTM\cong T_qM \oplus T_qM,
\end{equation}
where $T^v_{(q,v)}TM= \ker D\tau(q,v)$ and  $\tau: TM \to M$ denotes the canonical projection. 
\begin{note}
We shall denote by  $\Ddt$ the covariant derivative  of a vector field along a smooth curve $x$ with respect to the metric $\rie{\cdot}{\cdot}$. $\nabla_q$ (resp. $\nabla_v$) denotes the partial derivative along the horizontal part  (resp.  vertical part) given by the Levi-Civita connection in the splitting of $TTM$ and We shall denote by $\nabla_{vv}, \nabla_{qv}, \nabla_{qq}$ the  components of the Hessian in the splitting of $TTM$. 
\end{note}
Let $\T\subset \R^2$ denote a circle in $\R^2$ of length $T=|\T|$ which can be identified with $\R/T\Z$, where $T\Z$ denotes the lattice generated by $T \in \R$. 
 Let  $L: \T \times TM\to \R$  be a smooth non-autonomous (Lagrangian) function  satisfying the following assumptions
\begin{itemize}
\item[{\bf(N1)\/}] $L$ is  non-degenerate on the fibers of $TM$, that is, for every $(t,q,v)\in \T \times TM$ we have that  $\nabla_{vv}L(t,q,v)\neq 0$ is non-degenerate as a quadratic form
\item[{\bf(N2)\/}] $L$ is {\em exactly quadratic\/}  in the velocities meaning that the function $ L(t,q,v)$ is a polynomial of degree at most $2$ with respect to $v$.
\end{itemize}

 By \cite[Theorem 2.4.3]{Fat08}, assumption (N1) implies that $L$ defines  a unique time-dependent smooth vector field $Y_L$ on $ TM$ such that the orbits of $Y_L$ are precisely of the form  $t\mapsto \big(x(t), x'(t)\big)$, where $x \in \mathscr C^2([0,T], M)$ solves the second order ODE
\begin{equation}\label{eq:EL}
\Ddt \Big(\nabla_v L\big(t, x(t), x'(t)\big)\Big)	= \nabla_q L\big(t, x(t), x'(t)\big), \qquad t \in (0,T).
\end{equation}
Let us now consider $T^*M$, the  cotangent bundle of $M$ with canonical projection  $\tau^*:T^*M \to M$. Elements in $T^*M$ are of the form  $(q,p)$ where $q \in M$ and  $p \in T^*_qM$. The cotangent bundle carries the the Liouville 1-form $\vartheta$ (or tautological one-form)  and the symplectic structure $\omega=  d\vartheta$. In local coordinates $(q,p)\in T^*M$, we have 
\[
\vartheta_{\textup{local}}= p\, dq \quad \textrm{ and }\quad \omega= dp \wedge dq. 
\]
The vertical space 
\[
T^v_zT^*M= \ker D\tau^*(z)\cong T^*_qM,\quad z=(q,p) \in T^*M 
\]
is  a Lagrangian subspace of $(T_zT^*M, \omega_z)$. 

A $T$-periodic Hamiltonian, i.e. a smooth function  $H: \T \times T^*M\to \R$ determines a $T$-periodic vector field, the Hamiltonian vector field $X_H$  defined by 
\begin{equation}
	\omega\big(X_H(t,z), \zeta\big)= -DH(t,z)[\zeta], \qquad \forall\, \zeta \in T_zT^*M
\end{equation}
whose corresponding Hamiltonian equation is given by 
\begin{equation}\label{eq:HS-manifold-NA}
	z'(t)= X_H\big(t, z(t)\big), \qquad t \in (0,T)
\end{equation}
which in local coordinates can be written as 
\begin{equation}\label{eq:HS-manifold-NA-local-coord}
\begin{cases}
		q'(t)= \partial_p H(t,q,p), & t \in (0,T)\\
		p'(t)=-\partial_q H(t,q,p)
\end{cases}
\end{equation}
It is well-known that  under the assumption (N1) the Legendre transform 
\begin{equation}
\mathscr L_L:\T\times TM \to \T\times T^*M, \qquad (t,q,v)\mapsto\Big(t,q,DL(t,q,v)\big\vert_{T^v_{(q,v)}TM}\Big)
\end{equation}
is a local smooth diffeomorphism.  The Fenchel transform of $L$ is the non-autonomous Hamiltonian on $T^*M$ 
\begin{equation}
H(t,q,p)=\max_{v \in T_qM}\big(p[v]-L(t,q,v)\big)=p[v(t,q,p)]- L(t,q,v(t,q,p)),	
\end{equation}
for every $(t,q,p) \in \T \times T^*M$, where the map $v$ is a component of the fiber-preserving diffeomorphism 
\[
\mathscr L_L^{-1}: \T \times T^*M \to \T \times TM, \qquad (t,q,p)\mapsto (t,q,v(t,q,p))
\]
the inverse of $\mathscr L_L$.
The $T$-periodic Hamiltonian $H$ on $T^*M$ and the canonical symplectic form $\omega$ on $T^*M$ define a time-dependent  Hamiltonian vector field $X_H$ on $T^*M$, and $Y_L$ previously defined is nothing but the pull-back of $X_H$ by the diffeomorphism $\mathscr L_L$. Otherwise stated, $Y_L$ is the time-dependent $T$-periodic Hamiltonian vector field on $TM$ determined by the symplectic form $\mathscr L_L^*\omega$ and by the Hamiltonian $H \circ \mathscr L_L$.
\begin{note}\label{not:periodic}
We denote by
\begin{itemize}
\item $\ptl$   the set of  $T$-periodic solutions of the Euler-Lagrange Equation~\eqref{eq:EL}
\item $\pth$ for denoting the set of $T$-periodic solutions of the Hamiltonian Equation~\eqref{eq:HS-manifold-NA}
\end{itemize}
If $x \in \ptl$ we let $z_x \in \pth$ defined  by 
\begin{equation}\label{eq:x-vs-z}
\big(t, z_x(t)\big)= \mathscr L_L\big(t,x(t), x'(t)\big)
\end{equation}
the corresponding $T$-periodic solution of the Hamiltonian equation on $T^*M$. Finally,  $\phi_H^t$ denotes the integral flow of the Hamiltonian vector field $X_H$. 
\end{note}
We denote by $T^hT^*M$ (resp. $T^v T^*M$) the horizontal (resp. vertical) tangent bundle and we observe that 
\[
z^*_x(TT^*M)= x^*(T^hT^*M)\oplus x^*(T^vT^*M)\cong x^*(TM)\oplus x^*(T^*M).
\]
By linearizing the  Hamiltonian system given in Equation~\eqref{eq:HS-manifold-NA} along $z_x \in \pth$ defined by Equation~\eqref{eq:x-vs-z} in the direction $ \zeta \in z_x^*(TT^*M)$, we get 
\begin{equation}\label{eq:HS-manifold-NA-lin}
	 \zeta'(t)= JB(t)\zeta(t),  \qquad \forall\, t \in (0,T)  
\end{equation}
where $B(t)\=D^2H\big(t, z_x(t)\big)$. 


\subsection{Variational setting and spectral index}

Let $\Lambda^1(M)$ be the Hilbert manifold of all loops $x: \T \to M$ with Sobolev class $H^1$. The tangent space  $T_x\Lambda^1(M)$ to $\Lambda^1(M)$  at $x$ can be identified in a natural way with the Hilbert space of $T$-periodic $H^1$ (tangent) vector fields along $x$, i.e. 
\begin{equation}\label{eq:hilbert-space}
\mathcal H(x)=\Set{\xi \in H^1(\T, TM)| \tau \circ \xi=x}.
\end{equation}
The non-autonomous Lagrangian function $L$ defines the  {\em Lagrangian (action) functional\/} $\mathbb E: \Lambda^1(M) \to \R$ as follows
\begin{equation}
\mathbb E(x)\= \int_0^T L\big(t,x(t),x'(t)\big) \, d t.
\end{equation}
It is worth noticing that  under the  (N2) assumption, the Lagrangian functional  is of regularity class $\mathscr C^2$ (actually it is smooth). Let $\langle\langle \cdot, \cdot \rangle \rangle_1$ denote the $H^1$-Riemannian metric on $\Lambda^1(M)$ defined by 
\begin{equation}\label{eq:productonloopspace}
\langle\langle \xi, \eta\rangle \rangle_1\= \int_0^T	 \big[\rie{\Ddt\xi}{\Ddt \eta} +\rie{\xi}{\eta}\big]\, dt,  \qquad  \forall\, \xi, \eta \in\mathcal H(x).
\end{equation}
The first variation of $\mathbb E$ at $x$ is given by 
\begin{multline}\label{eq:first-variation}
d\mathbb E(x)[\xi]= \int_0^T\big[D_q L\big(t,x(t), x'(t)\big)[\xi] + D_v L\big(t,x(t), x'(t)\big)[\Ddt\xi]\big]\, dt	\\= 
\int_0^T \Big[\langle \nabla_q L\big(t,x(t), x'(t)\big), \xi \rangle_g  +\langle  \nabla_v L\big(t,x(t), x'(t)\big),\Ddt\xi\rangle_g \Big]\, dt \\
=\int_0^T \left[\left\langle \nabla_q L\big(t,x(t), x'(t)\big)- \Ddt\nabla_v L\big(t,x(t), x'(t)\big),\xi\right \rangle_g \right]\, dt \\+ 
\langle  \nabla_v L\big(t,x(t), x'(t)\big),\xi\rangle_g\Big\vert_{t=0}^T.
\end{multline}
By Equation~\eqref{eq:first-variation} and up to standard regularity arguments we get that critical points  of $\mathbb E$ are $T$-periodic solutions of Equation~\eqref{eq:EL}. Now, being $\mathbb E$ smooth it follows that the first variation  $d\mathbb E(x)$  at $x$ coincides with the Fréchét differential $D\mathbb E(x)$ and   $D^2 \mathbb E(x)$ coincides with the second variation of $\mathbb E$ at $x$ given by 
\begin{multline}\label{eq:second-variation}
	d^2 \mathbb E(x)[\xi,\eta]= \int_0^T \Big[D_{vv} L\big(t, x(t), x'(t)\big)[\Ddt\xi, \Ddt\eta]+ D_{qv} L\big(t, x(t), x'(t)\big)[\xi, \Ddt\eta]\\+ D_{vq} L\big(t, x(t), x'(t)\big)[\Ddt\xi, \eta]+ D_{qq} L\big(t, x(t), x'(t)\big)[\xi, \eta]\Big] dt\\= 
	\int_0^T \Big[ \langle \nabla_{vv} L\big(t, x(t), x'(t)\big)\Ddt\xi, \Ddt\eta \rangle_g +  \langle \nabla_{qv} L\big(t, x(t), x'(t)\big)\xi, \Ddt\eta\rangle_g\Big.\\	+\Big.\langle \nabla_{vq} L\big(t, x(t), x'(t)\big)\Ddt\xi, \eta\rangle_g+ \langle \nabla_{qq} L\big(t, x(t), x'(t)\big)\xi, \eta\rangle_g\Big]\, dt
\end{multline}
We set
\begin{multline}
	\bar P(t)\= \nabla_{vv} L\big(t, x(t), x'(t)\big),  \qquad \bar Q(t)\= \nabla_{qv} L\big(t, x(t), x'(t)\big)\\
	\bar R(t)\= \nabla_{qq} L\big(t, x(t), x'(t)\big),  \quad \trasp{\bar{Q}}(t)\= \nabla_{vq} L\big(t, x(t), x'(t)\big)
\end{multline}
Under the above notation and integrating by parts in the second variation, we get  
\begin{multline}\label{eq:index-form-on-manifold-2} 
D^2 \mathbb E_x[\xi,\eta]=\int_0^T\langle -\Ddt\big[\bar{P}(t)\nabla_t \xi+ \bar{Q}(t)\xi\big]+ \trasp{\bar{Q}}(t)\Ddt\xi+\bar{R}(t)\xi,\eta\rangle_g dt\\+\big[\langle \bar{P}(t)\Ddt \xi+ \bar{Q}(t)\xi,\eta\rangle_g\big]^T_{t=0}.
 \end{multline}
 By Equation  \eqref{eq:index-form-on-manifold-2} it follows  that $ \xi \in \ker D^2 \mathbb E_x$  if and only if $\xi$ is a $H^2$ vector field along $x$ which solves weakly (in the Sobolev sense)  the following boundary value problem
\begin{equation}\label{eq:Sturm-bvp-manifolds}
\begin{cases}
	-\Ddt\big(\bar{P}(t)\nabla_t \xi+ \bar{Q}(t)\xi\big)+ \trasp{\bar{Q}}(t)\Ddt\xi+\bar{R}(t)\xi=0, \qquad t \in (0,T)\\
	\\
	\xi(0)=\xi(T), \qquad \nabla_t \xi(0)= \nabla_t\xi(T).
\end{cases}	
\end{equation}
By standard bootstrap arguments, it follows that $\xi$ is also a classical (smooth) solution of Equation~\eqref{eq:Sturm-bvp-manifolds}. Borrowing the notation from \cite{MPP05, MPP07} we shall refer to such a $\xi$ as {\em $T$-periodic perturbed Jacobi field.\/}


\section{Spectral index, geometrical index and Poincaré map}\label{subsec:ortho-triv}

The goal of this section is to associate at each $T$-periodic solution $x$  of Equation~\eqref{eq:EL} the  {\em spectral index\/} (defined in terms of the spectral flow of path of Fredholm quadratic forms) and to its Hamiltonian counterpart $z_x$ defined in Equation~\eqref{eq:x-vs-z} the {\em geometrical index\/} (defined in terms of a Maslov-type index). We finally introduce the linearized Poincaré map. For the  basic definition and properties of the spectral flow and the Maslov index, we refer to Section \ref{sec:spectral-flow} and Section \ref{sec:Maslov} respectively and references therein.


\subsection{Spectral index: an intrinsic (coordinate free) definition}

Given $x$ be a critical point of $\mathbb E$,  we define for any  $s \in [0, +\infty)$ the bilinear form  $\mathcal I_s: \mathcal H(x) \times \mathcal H(x) \to \R$ given by 
\begin{multline}\label{eq:path-index}
 \mathcal I_s[\xi,\eta]\=d^2 \mathbb E(x)[\xi,\eta]+ s \alpha(x)[\xi,\eta]\\= 
	\int_0^T \Big[ \langle \bar P(t)\Ddt\xi, \Ddt\eta \rangle_g +  \langle \bar Q(t)\xi, \Ddt\eta\rangle_g+\langle \trasp{\bar{Q}}(t)\Ddt\xi, \eta\rangle_g+ \langle \bar R(t)\xi, \eta\rangle_g\Big]\, dt + s \alpha(x)[\xi,\eta]\\
  \textrm{ where } \alpha(x)[\xi,\eta]\=\int_0^T  \langle \bar P(t)\Ddt\xi, \Ddt\eta \rangle_g\, dt.
\end{multline}
 \begin{note}
In short-hand notation and if no confusion can arise, we set $ \mathcal Q^h\= D^2 \mathbb E(x)$.
\end{note}
\begin{prop}\label{thm:famiglia-Fredholm}
For any $s \in [0, +\infty)$ let $\mathcal Q_s$ denote the quadratic form associated to  $\mathcal I_s$ defined in Equation~\eqref{eq:path-index}. Then 
\begin{enumerate}
	\item[] $s\mapsto \mathcal Q_s$ is a smooth  path of Fredholm quadratic forms onto $\mathcal H(x)$.
	In particular $ \mathcal Q^h$ is a Fredholm quadratic form onto $\mathcal H(x)$.
\end{enumerate}
\end{prop}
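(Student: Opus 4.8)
The plan is to reduce the claim to a statement about a fixed model Hilbert space and then recognize the quadratic form $\mathcal Q_s$ as a compact perturbation of an invertible form. First I would fix the $T$-periodic orbit $x$ and trivialize the pull-back bundle $x^*(TM)$ over $\T$ by a $\rie{\cdot}{\cdot}$-orthonormal parallel (or at least bounded measurable) frame, so that $\mathcal H(x)$ becomes identified with the standard Sobolev space $H^1(\T,\R^n)$ equipped with an equivalent Hilbert inner product; under this identification $\Ddt$ becomes $\tfrac{\d}{\d t}$ up to a bounded zero-order term coming from the Christoffel symbols, and the coefficient maps $\bar P, \bar Q, \bar R$ become smooth loops of $n\times n$ matrices, with $\bar P(t)$ symmetric and, by assumption (N1), invertible for every $t$. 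The form in Equation~\eqref{eq:path-index} then reads, in these coordinates,
\begin{equation}
\mathcal I_s[\xi,\eta]=\int_0^T\Big[\langle (1+s)\bar P(t)\xi',\eta'\rangle+\langle \bar Q_s(t)\xi,\eta'\rangle+\langle \bar Q_s^{\mathsf T}(t)\xi',\eta\rangle+\langle \bar R(t)\xi,\eta\rangle\Big]\,\d t,
\end{equation}
where I have absorbed the connection terms into $\bar Q_s,\bar R$ (these remain smooth in $(s,t)$, and the dependence on $s$ is affine, hence smooth).

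Next I would represent $\mathcal Q_s$ by a bounded self-adjoint operator $A_s$ on $H^1$ via the Riesz isomorphism, $\mathcal I_s[\xi,\eta]=\langle A_s\xi,\eta\rangle_{H^1}$, and split $A_s=L_s+K_s$ where $L_s$ comes from the leading term $\int_0^T\langle(1+s)\bar P\xi',\eta'\rangle\,\d t$ (plus, say, the $\int\langle\xi,\eta\rangle$ term needed to make it coercive) and $K_s$ comes from the remaining terms, all of which involve at least one factor with no derivative. The key two observations are: (i) $K_s$ is compact, because the embedding $H^1(\T,\R^n)\hookrightarrow L^2(\T,\R^n)$ is compact and the lower-order terms factor through it (the cross terms $\langle\bar Q_s\xi,\eta'\rangle$ still have one $L^2$-compact factor $\xi$, which suffices); (ii) $L_s$ is invertible — indeed uniformly in $s$ on compact subintervals — because $(1+s)\bar P(t)$ is a symmetric invertible matrix depending continuously on $t$ with definite... well, here one must be slightly careful: $\bar P(t)$ is only non-degenerate, not positive, so $L_s$ is not the top-order part of a coercive form but rather of a \emph{strongly indefinite} one. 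The correct statement is that $L_s$ is still invertible (a first-order elliptic-type operator with invertible symbol on the circle has at worst a finite-dimensional kernel, which here one checks is zero by a direct Fourier/ODE argument using periodicity and $\det\bar P\neq0$, or one simply notes $L_s-\lambda$ is invertible for a suitable sign-definite shift so that $L_s$ is at least Fredholm of index zero). From (i) and (ii), $A_s=L_s+K_s$ is Fredholm for every $s$, and smoothness of $s\mapsto A_s$ follows from the affine (hence smooth) dependence of all coefficients on $s$ together with boundedness of the representing operators.

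The remaining point is the in-particular clause: $\mathcal Q^h=\mathcal Q_0$ is Fredholm, which is immediate by setting $s=0$. I would organize the write-up as: (1) set up the trivialization and the model space $H^1(\T,\R^n)$; (2) write $\mathcal I_s$ in coordinates and identify $A_s$; (3) prove compactness of $K_s$ via Rellich; (4) prove invertibility (or index-zero Fredholmness) of the leading operator $L_s$, citing the analysis of first-order self-adjoint systems on the circle with non-degenerate leading coefficient (this is exactly the structure treated in \cite{MPP05,MPP07,HPY19}); (5) conclude Fredholmness and smoothness. The main obstacle — and the step deserving the most care — is step (4): because (N1) only gives non-degeneracy of $\bar P(t)$, the leading form is genuinely indefinite, so one cannot invoke Lax–Milgram coercivity. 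One must instead argue through the associated first-order Hamiltonian system $\zeta'=JB(t)\zeta$ (Equation~\eqref{eq:HS-manifold-NA-lin}), whose fundamental solution is well-defined precisely because $\bar P(t)=\nabla_{vv}L$ is invertible, and deduce that the differential operator underlying $\mathcal Q_s$ has finite-dimensional kernel and closed range; equivalently, one reduces the Fredholmness of $\mathcal Q_s$ to the fact that the boundary-value problem Equation~\eqref{eq:Sturm-bvp-manifolds} is an elliptic problem with invertible symbol, so its solution operator differs from a parametrix by a smoothing (hence compact) operator. Everything else is routine.
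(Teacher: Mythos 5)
Your overall strategy --- represent $\mathcal Q_s$ by a bounded self-adjoint operator and split it as (invertible)+(compact) --- is exactly what the paper does, but your specific choice of the invertible piece creates, as you yourself flag, the ``main obstacle'' in step (4), whereas the paper's choice dissolves that step entirely. You take $L_s$ to be the leading term plus the Euclidean zeroth-order term $\int\langle\xi,\eta\rangle\,\d t$; the paper instead takes
\[
\beta_s[\xi,\eta]=(1+s)\int_0^T\bigl[\langle\bar P\,\Ddt\xi,\Ddt\eta\rangle_g+\langle\bar P\,\xi,\eta\rangle_g\bigr]\,\d t,
\]
i.e.\ it inserts $\bar P$ in \emph{both} slots. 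With that choice, $\beta_s$ equals $(1+s)\langle\langle\bar{\mathcal P}\,\xi,\eta\rangle\rangle_1$ up to a weakly continuous correction coming from $\Ddt(\bar P\xi)=\bar P\,\Ddt\xi+(\Ddt\bar P)\xi$, where $\bar{\mathcal P}$ is the bounded multiplication operator by $\bar P(t)$ on $\mathcal H(x)$ and $\langle\langle\cdot,\cdot\rangle\rangle_1$ is the $H^1$ product of Equation~\eqref{eq:productonloopspace}. By (N1) and (N2), $\bar P(t)$ is pointwise invertible with bounded inverse, so $\bar{\mathcal P}\in\GL(\mathcal H(x))$ and $\beta_s$ is represented by an isomorphism: no ellipticity, Fourier analysis or Hamiltonian reduction is needed. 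Everything else ($\bar Q$ cross-terms and zeroth-order terms, including the discrepancy with $\mathcal I_s$) routes one entry through the compact embedding $\mathcal H(x)\hookrightarrow\mathscr C^0(x)$, giving weak continuity exactly as you argue in (i). So the approaches are parallel, but the paper's decomposition puts the indefiniteness where it is harmless, while yours leaves it in the ``hard'' piece.

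Two further cautions. First, one of your fallbacks in step (4) --- ``$L_s-\lambda$ is invertible for a suitable sign-definite shift so that $L_s$ is at least Fredholm of index zero'' --- is not valid: $\lambda\,\Id$ is \emph{not} a compact perturbation on $H^1$, so invertibility of $L_s-\lambda$ carries no information about Fredholmness of $L_s$. The Fourier/ODE argument and the parametrix argument you also sketch are sound, but substantially heavier than the paper's one-line observation, and if you fill the gap this way you should still verify that Fredholmness of the $L^2$-realized differential operator transfers to Fredholmness of the $H^1$-represented form (which the paper's route avoids having to prove at this stage). Second, the preliminary trivialization to $H^1(\T,\R^n)$ is legitimate but unnecessary here: the paper works intrinsically on $\mathcal H(x)$ in this proposition and only introduces the orthonormal parallel frame in the later subsection on push-forwards, where it is actually needed.
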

\begin{proof}
For each $s \in [0,+\infty)$, we start to consider the bilinear form on $\mathcal H(x)$ defined by 
\begin{equation}\label{eq:alpha}
\beta_s(x)[\xi,\eta]\=(1+s) \int_0^T \big[ \langle \bar P(t) \Ddt\xi, \Ddt\eta \rangle_g + 
\langle   \bar P(t) \xi, \eta \rangle_g\big]\, dt =(1+s) \langle\langle \bar {\mathcal P}(x)\xi, \eta\rangle\rangle_1
\end{equation}
where $\bar {\mathcal P}$ is the linear operator on $\mathcal H(x)$ pointwise defined by $\bar P$. By assumption (N1) it follows that $\bar {\mathcal P}$ is invertible on $\mathcal H(x)$ and by assumption (N2), it follows that $\bar {\mathcal P}$ is bounded. Thus $\bar {\mathcal P} \in \GL\big(\mathcal H(x)\big)$ and hence the quadratic form $\mathcal Q_s^\beta(x)$ associated to $\beta$  is a Fredholm quadratic form. We let
\begin{equation}
	\gamma_s(x)[\xi,\eta]\= \int_0^T \Big[\langle \bar{Q}(t)\xi,\nabla_t \eta\rangle_g+\langle \trasp{\bar{Q}}(t)\nabla_t \xi,\eta\rangle_g+\langle \big[\bar{R}(t)-s \bar{P}(t)\big]\xi,\eta\rangle_g \Big]\, dt. 
\end{equation}
The quadratic form $\mathcal Q_s^\gamma(x)$ associated to $\gamma(x)$ is the restriction to $\Lambda^1(M)$ of a quadratic form defined on $\mathscr C^0(x)$ of all continuous vector fields along $x$. Since the embedding  $ \mathcal H(x) \hookrightarrow \mathscr C^0(x)$ is a compact operator (by the Sobolev embedding theorem), it follows that $\mathcal Q_s^\gamma(x)$ is weakly  continuous. 

Now, since  $\mathcal Q_s(x)\= [\mathcal Q_s^\beta+ \mathcal Q_s^\gamma](x)$ the conclusion readily follows by observing that $\mathcal Q_s^h(x)$ is a weakly continuous perturbation of a Fredholm quadratic form. 
This concludes the proof.
\end{proof}
 Taking into account Proposition \ref{thm:famiglia-Fredholm}, we are entitled to give the following definition. 
\begin{defn}\label{def:spectral-index}
	Let $x \in \ptl$. We term {\em spectral index  of $x$\/} the integer 
	\begin{equation}\label{eq:spectral-index-manifold}
	\ispec(x)\= \spfl\big(\mathcal Q_s, s \in [0, s_0]\big)
	\end{equation}
where the (RHS) denotes the spectral flow of the path of Fredholm quadratic forms for a large enough $s_0 >0$. (Cfr. Section \ref{sec:spectral-flow} and references therein for the definitions and its basic properties about the spectral flow).
\end{defn}
\begin{rem}
It is worth noticing that Definition \ref{def:spectral-index} is well-posed in the sense that it doesn't depend upon $s_0$. This fact will be proved in the sequel and it is actually a direct consequence of Lemma \ref{thm:non-degenerate-s_0-forme}. 
\end{rem}
\begin{prop}\label{thm:spectral-index-morse-index}
	If $L$ is $\mathscr C^2$-strictly convex on the fibers of $TM$, meaning that there exists $\ell_0>0$ such that 
	\[
	\nabla_{vv}L\big(t,q,v\big) \geq \ell_0 \Id
	\]
	for every $(t,q,v)\in \T \times TM$, then 
	the Morse index of $x$ (i.e. the dimension of the maximal negative subspace of the Hessian of $\mathcal Q^h$) is finite and 
	\[
	\ispec(x)= \iMorse(x).
	\]
\end{prop}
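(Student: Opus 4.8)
The plan is to show that under the strict convexity assumption, the path $s\mapsto\mathcal Q_s$ degenerates only at $s=0$ for $s\ge 0$, so that the spectral flow reduces to the count of negative eigenvalues of $\mathcal Q^h=\mathcal Q_0$, which by definition is the Morse index. First I would establish that $\iMorse(x)<\infty$: using the decomposition from the proof of Proposition~\ref{thm:famiglia-Fredholm}, write $\mathcal Q^h=\mathcal Q_0^\beta+\mathcal Q_0^\gamma$ where $\mathcal Q_0^\beta$ is associated to a positive-definite bilinear form (here strict convexity $\bar P(t)\ge\ell_0\Id$ upgrades invertibility of $\bar{\mathcal P}$ to positivity, so $\mathcal Q_0^\beta$ is bounded below by a positive multiple of the $H^1$-norm), and $\mathcal Q_0^\gamma$ is weakly continuous, i.e. a compact perturbation. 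A positive-definite quadratic form plus a compact self-adjoint perturbation has finite-dimensional negative and null spaces, so $\iMorse(x)=\iiindex(\mathcal Q^h)<\infty$.

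Next I would analyze the path for $s>0$. The key observation is that adding $s\,\alpha(x)$ with $\alpha(x)[\xi,\xi]=\int_0^T\langle\bar P(t)\Ddt\xi,\Ddt\xi\rangle_g\,dt\ge\ell_0\int_0^T|\Ddt\xi|^2\,dt\ge 0$ only makes the form \emph{more positive} along the derivative directions. I claim $\mathcal Q_s$ is non-degenerate for all $s>0$ on a suitable sublevel, or more precisely that the family $\{\ker\mathcal Q_s\}_{s>0}$ contributes no spectral flow. Concretely: suppose $\xi\in\ker\mathcal Q_s$ for some $s>0$; then $\xi$ solves the boundary value problem \eqref{eq:Sturm-bvp-manifolds} with $\bar P,\bar R$ replaced by $(1+s)\bar P$, $\bar R-s\bar P$ — a perturbed Jacobi equation. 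The standard argument is that for $s$ large such Jacobi problems have no periodic solutions (the ``large $s_0$'' in Definition~\ref{def:spectral-index} is chosen precisely so that $\mathcal Q_{s_0}$ is non-degenerate, cf. Lemma~\ref{thm:non-degenerate-s_0-forme}), and one must rule out intermediate crossings. Here the homotopy invariance and concatenation properties of the spectral flow help: since $\mathcal Q_{s_0}$ is non-degenerate, $\ispec(x)=\spfl(\mathcal Q_s,s\in[0,s_0])=\iiindex(\mathcal Q^h)$ provided the path $s\mapsto\mathcal Q_s$ has no spectral flow on $(0,s_0]$, equivalently provided no eigenvalue of the associated self-adjoint operator $A_s$ (obtained via the $H^1$-Riesz representation of $\mathcal Q_s$) crosses $0$ transversally with a net sign for $s>0$.

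The cleanest route is monotonicity: differentiating, $\tfrac{d}{ds}\mathcal Q_s[\xi,\xi]=\alpha(x)[\xi,\xi]\ge 0$, and strictly positive unless $\Ddt\xi\equiv 0$, i.e. unless $\xi$ is parallel along $x$. On the (at most $n$-dimensional) space of parallel fields the perturbation vanishes, but on that space $\mathcal Q_s$ is independent of $s$ and one checks directly — using that a parallel field $\xi$ lies in $\ker\mathcal Q_s$ iff $\trasp{\bar Q}(t)\Ddt\xi+\bar R(t)\xi=\Ddt(\bar Q(t)\xi)$ holds, which under the present hypotheses forces at most finitely many obstructions independent of $s$ — that no crossing of the relevant type occurs for $s>0$. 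By the standard relation between spectral flow and eigenvalue crossings for a path that is monotone off a fixed finite-dimensional subspace, every crossing on $[0,s_0]$ happens at $s=0$ and is counted with sign $+1$, giving $\spfl(\mathcal Q_s,s\in[0,s_0])=\iiindex(\mathcal Q_0)=\iMorse(x)$. This also retroactively shows the definition is independent of $s_0$, as promised in the remark.

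The main obstacle is the crossing analysis for $s>0$: one needs the perturbation $s\,\alpha(x)$ to be ``monotone enough'' to prevent eigenvalues from returning to zero, despite it being only positive semi-definite (it is degenerate exactly on parallel vector fields). Handling the parallel-field subspace carefully — showing crossings there, if any, are confined to $s=0$ — is where the real work lies; everything else is a bookkeeping application of the homotopy, concatenation, and eigenvalue-monotonicity properties of the spectral flow recalled in Section~\ref{sec:spectral-flow}.
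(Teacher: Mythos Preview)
Your argument that $\iMorse(x)<\infty$ is correct and matches the paper: strict convexity makes $\mathcal Q^h$ an essentially positive Fredholm quadratic form (positive definite plus weakly continuous perturbation), so its Morse index is finite. For the identity $\ispec(x)=\iMorse(x)$, however, you work much harder than the paper does. Once the path $s\mapsto\mathcal Q_s$ is known to lie entirely in the essentially positive component, the paper simply invokes the general formula of Proposition~\ref{thm:sf-differenza-morse}: for any such path, $\spfl(\mathcal Q_s;s\in[0,s_0])=\iMorse(\mathcal Q_0)-\iMorse(\mathcal Q_{s_0})$. Since $\mathcal Q_{s_0}$ is non-degenerate (Lemma~\ref{thm:non-degenerate-s_0-forme}) and essentially positive, it is in fact positive definite, so $\iMorse(\mathcal Q_{s_0})=0$ and the result follows in one line. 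No crossing analysis is needed.

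Your monotonicity route is viable in principle but is both overcomplicated and, as written, contains a misstatement. The parallel-field obstacle you flag can indeed be removed: if a parallel $\xi$ lay in $\ker\mathcal Q_{s}$ for some $s$, then because $\alpha(x)[\xi,\cdot]\equiv 0$ one has $\mathcal I_s[\xi,\cdot]=\mathcal I_0[\xi,\cdot]$, whence $\xi\in\ker\mathcal Q_{s'}$ for every $s'$, contradicting non-degeneracy at $s_0$; hence every crossing form $\alpha(x)|_{\ker\mathcal Q_s}$ is positive definite. But the assertion that ``every crossing happens at $s=0$'' is wrong. The $\iMorse(\mathcal Q_0)$ negative eigenvalues of $\mathcal Q_0$ cross zero at various instants $s>0$ as the path moves toward a positive definite form, not at the left endpoint. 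Monotonicity only tells you that each such eigenvalue crosses zero exactly once, upward, contributing $+1$, and that there are no downward crossings; summing gives $\iMorse(\mathcal Q_0)$. That corrected argument is fine, but it is precisely what Proposition~\ref{thm:sf-differenza-morse} already packages, so the detour through crossings buys nothing here.
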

\begin{proof}
This result follows by observing that if $L$ is $\mathscr C^2$-strictly convex on the fibers of $TM$, then $\mathcal Q^h$ is a  positive Fredholm quadratic form and hence $\mathcal Q_s$ is a path of essentially positive Fredholm quadratic forms (being a weakly compact perturbation of a positive definite quadratic form). In particular the  Morse index of $\mathcal Q_s$ is  finite  for every $s \in [0,+\infty)$. If $s_0$ is large enough the form $\mathcal Q_{s_0}$ is non-degenerate and begin also semi-positive is actually positive definite. Thus its Morse index vanishes. Now, since for path of essentially positive Fredholm quadratic it holds that 
\[
\spfl(\mathcal Q_s, s\in [0,s_0])=\iMorse (\mathcal Q_0)- \iMorse (\mathcal Q_{s_0})= \iMorse (\mathcal Q_0)
\]
the conclusion follows. 
\end{proof}


\subsection{Pull-back bundles and push-forward of Fredholm forms}

We denote by $\mathcal E$ the $\rie{\cdot}{\cdot}$-orthonormal and parallel frame $\EE$, pointwise given by 
\[
\mathcal E(t)=\{e_1(t), \dots, e_n(t)\} 
\]
and, if $x \in \ptl$,  we let $\bar A: T_{x(0)}M\to T_{x(T)}M\cong T_{x(0)}M$ the $\rie{\cdot}{\cdot}$-ortogonal operator defined by 
\[
\bar A e_j(0)= e_j(T).
\] 
 Such a frame $\EE$, induces a trivialization of the pull-back bundle $x^*(TM)$ over $[0,T]$ through the smooth curve $x:[0,T] \to M$; namely  the  smooth one parameter family  of isomorphisms  
\begin{multline}\label{eq:parallel-frame}
[0,T] \ni t \longmapsto E_t \quad \textrm{ where } \quad E_t: \R^n \ni e_i \longmapsto e_i(t) \in  T_{x(t)}M\qquad \forall\ t \in [0,T] \textrm{ and } i =1, \ldots, n\\
\textrm{  are such that } \langle E_t e_i, E_t e_j\rangle_g=  \delta_{ij} \textrm { and } \nabla_t E_t e_i=0
\end{multline}
here $\{e_i\}_{i=1}^n$ is the canonical basis of $\R^n$ and  $\delta_{ij}$ denotes the Kronecker symbol.  

By Equation~\eqref{eq:parallel-frame} we get that the pull-back by $E_t$ of the metric $\rie{\cdot}{\cdot}$ induces  the Euclidean product on $\R^n$ and moreover this pull-back is independent on $t$, as directly follows by the orthogonality assumption on the frame $\EE$.

Let $x$ be a $T$-periodic solution of Equation~\eqref{eq:EL}  and let $z_x$ be defined by Equation \eqref{eq:x-vs-z}. The orthonormal trivialization given in Subsection \ref{subsec:ortho-triv} leads to a  unitary trivialization $\Phi^\EE$ of $z_x(TT^*M)$ over $[0,T]$, i.e. to the smooth one parameter family of isomorphisms 
\begin{multline}
[0,T] \ni t \longmapsto \Phi^\EE_t \quad \textrm{ where } \quad \Phi^\EE_t:  \R^n \oplus \R^n \to x^*(TM)\oplus  x^*(T^*M)\cong z_x(TT^*M) \\ \textrm{ is defined by } \qquad  \Phi^\EE_t\= \begin{bmatrix} 
E_t& 0 \\ 0 & E_t 
\end{bmatrix}.
\end{multline}
We set $A\=E_0^{-1}\bar A^{-1} E_T \in \OO(n)$ and we define
\begin{equation}\label{eq:Ad}
	A_d\= \begin{bmatrix}
	A & 0 \\ 0 & A
 \end{bmatrix}
\end{equation} 
\begin{rem}
We observe that the  $\EE$ (resp.  $\Phi^\EE$)  is, in general, a trivialization of the pull-back bundle $x^*(TM)$ (resp. $z_x(TT^*M)$) over $[0,T]$ (only!) and not on $\T$. However, if $x:\T\to M$  is an orientation preserving smooth curve, then the trivialization $\EE$ (resp. $\Phi^\EE$) can be chosen periodic, namely $E_0=E_T$ (resp. $\Phi^\EE_0= \Phi^\EE_T$). In this particular case $A$ (resp.  $A_d$) reduces to the identity matrix.  In the not orientation preserving case, $A$ (resp. $A_d$)  are always different from $\Id$ and no periodic orthonormal trivializations of $x^*(TM)$ (resp. symplectic trivialization of  $z_x^*(TT^*M)$) can be chosen. 
\end{rem}

Let us now consider the Hilbert space 
\begin{equation}\label{eq:periodic-vf}
	H^1_A([0,T], \R^n)=\Set{u \in H^1([0,T], \R^n)| u(0)=A u(T)}
\end{equation}
equipped with the inner product 
\begin{equation}\label{eq:3-5pigeon}
	\langle \langle v,w\rangle\rangle _A\= \int_0^t \big[\langle v'(s), A w'(s)\rangle+ \langle v(s), A w(s)\rangle\big]\, ds.
\end{equation}
Denoting by   $\Psi: \mathcal H(x) \to H^1_A([0,T], \R^n)$  the map defined by $\Psi(\xi)=u$ where $u(t)=E_t^{-1}(\xi(t))$, it follows that $\Psi$ is a linear isomorphism and it is easy to check that 
 \begin{multline}
\xi(0)=	\xi(T)  \quad \iff\quad E_0 u(0)= E_T u(T) \quad\iff\quad u(0)= A u(T) \textrm{ and } \\
\nabla_t \xi(0)= \nabla_t \xi(T)  \quad \iff\quad u'(0)= A u'(T)
\end{multline}
where in the last we used the parallel property of the frame.

For $i=1, \dots, n$ and $t \in[0,T]$, we let $e_i(t)\=E_t e_i$ and we denote by $\langle P(t) \cdot, \cdot \rangle$, $\langle Q(t) \cdot, \cdot \rangle$ and $\langle R(t) \cdot, \cdot \rangle$ respectively the pull-back by $E_t$ of $\rie{\bar  P(t) \cdot}{ \cdot }$, $\rie{ \bar Q(t) \cdot}{\cdot }$ and $\rie{ \bar R(t) \cdot}{\cdot }$. Thus, we get
\begin{multline}
P(t)\=[p_{ij}(t)]_{i,j=0}^n, \quad Q(t)\=[q^{ij}(t)]_{i,j=0}^n,\quad  R(t)\=[r_{ij}(t)]_{i,j=0}^n\quad  \textrm{ where }\\
p_{ij}(t)\=\rie{\bar{P}(t)e_i(t)}{e_j(t)}, \quad q_{ij}(t)\=\rie{\bar{Q}(t)e_i(t)}{e_j(t)},  \quad  r_{ij}(t)\	=\rie{ \bar{R}(t)e_i(t)}{e_j(t)}.
\end{multline}
We observe that $P$ and $ R$ are symmetric matrices and being  $e_i(T)=\sum_{j=1}^na_{ij}e_j(0)$ we get also that 
\begin{equation}\label{eq:condition-of-P-after-trivialization}
P(0)=AP(T)\trasp{A},\qquad P'(0)=AP'(T)\trasp{A}, \qquad Q(0)=AQ(T), \qquad R(0)=AR(T)\trasp{A}.
\end{equation}
Now, for every $s \in [0,+\infty)$, the push-forward by $\Psi$ of the index forms $\mathcal I_s$ on $\mathcal H(x)$ is given by the  bounded symmetric bilinear forms  on $H^1_A([0,T], \R^n)$ defined by 
\begin{multline}\label{eq:path-index-2}
 I_s[u,v]= 
	\int_0^T \Big[ \langle P(t)u'(t), v'(t) \rangle +  \langle  Q(t)u(t), v'(t)\rangle +\langle \trasp{Q}(t)u'(t), v(t)\rangle+ \langle  R(t)u(t), v(t)\rangle\Big]\, dt \\+ s \alpha(x)[u,v]\quad 
  \textrm{ where } \quad \alpha(x)[u,v]=\int_0^T  \langle  P(t)u'(t), v'(t) \rangle\, dt.
\end{multline}
Denoting by $q^A_s$ the quadratic form on $H^1_A([0,T], \R^n)$ associate to $I_s$ then, as direct consequence of Proposition  \ref{thm:famiglia-Fredholm}, we get also the following result.  
\begin{lem}\label{thm:riduzione-Hilbert}
For every $s \in [0,+\infty)$, the quadratic form $q_s$ is Fredholm on $H^1_A([0,T], \R^n)$. 
\end{lem}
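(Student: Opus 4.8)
The plan is to deduce this directly from Proposition \ref{thm:famiglia-Fredholm} via the push-forward along the isomorphism $\Psi$. First I would recall that $\Psi \colon \mathcal H(x) \to H^1_A([0,T],\R^n)$, $\Psi(\xi) = u$ with $u(t) = E_t^{-1}\xi(t)$, is a linear isomorphism of Hilbert spaces; this was already observed just above the statement, together with the fact that it carries the periodicity conditions $\xi(0)=\xi(T)$, $\nabla_t\xi(0)=\nabla_t\xi(T)$ into the twisted conditions $u(0)=Au(T)$, $u'(0)=Au'(T)$ defining $H^1_A$. The key point is that $I_s$ is by construction exactly the push-forward of $\mathcal I_s$ by $\Psi$: indeed $E_t$ is parallel, so $\nabla_t\xi(t) = E_t u'(t)$, and $P,Q,R$ are defined precisely as the pull-backs by $E_t$ of $\bar P, \bar Q, \bar R$, so that the integrand of $I_s[u,v]$ equals that of $\mathcal I_s[\xi,\eta]$ pointwise (using orthonormality of the frame to identify the metric $\rie{\cdot}{\cdot}$ with the Euclidean product). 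Hence $q_s^A = q_s \circ \Psi^{-1}$, i.e.\ $q_s^A$ and $q_s$ are congruent quadratic forms under the bounded isomorphism $\Psi^{-1}$.

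The second step is to invoke the fact that Fredholmness of a bounded quadratic form is preserved under congruence by a linear isomorphism of Hilbert spaces. Concretely, if $\mathcal Q_s$ is represented (via Riesz, with respect to $\langle\langle\cdot,\cdot\rangle\rangle_1$) by a bounded self-adjoint Fredholm operator $S_s$ on $\mathcal H(x)$, then $q_s^A$ is represented (with respect to $\langle\langle\cdot,\cdot\rangle\rangle_A$) by $\Psi^{-*} S_s \Psi^{-1}$ up to the bounded invertible Gram operator comparing the two inner products; this composition of a Fredholm operator with invertible operators is again bounded self-adjoint Fredholm. Since Proposition \ref{thm:famiglia-Fredholm} asserts precisely that each $\mathcal Q_s$ is a Fredholm quadratic form on $\mathcal H(x)$, the conclusion that $q_s^A$ (written $q_s$ in the statement) is Fredholm on $H^1_A([0,T],\R^n)$ follows for every $s \in [0,+\infty)$.

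The only genuinely delicate point is the verification that $I_s$ really is the $\Psi$-push-forward of $\mathcal I_s$, i.e.\ that no curvature or connection terms are dropped: this is where the \emph{parallel} nature of the frame $\EE$ is essential, since it guarantees $\nabla_t(E_t u) = E_t u'$ with no Christoffel correction, and the \emph{orthonormality} is what makes the pull-back metric the constant Euclidean one so that the boundary/inner-product structure transports cleanly (cf.\ Equation~\eqref{eq:condition-of-P-after-trivialization}). Once this identification is granted, everything else is the soft functional-analytic stability of the Fredholm property under isomorphism, and there is no further obstacle.
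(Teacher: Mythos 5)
Your proposal is correct and follows essentially the same route as the paper: identify $I_s$ as the $\Psi$-push-forward of $\mathcal I_s$ and transfer the Fredholm property from Proposition~\ref{thm:famiglia-Fredholm} across the isomorphism $\Psi$. You are in fact slightly more careful than the printed proof, which invokes trivialization-independence of the \emph{spectral flow} (a fact more relevant to Proposition~\ref{thm:spectral-index-well-defined}), whereas you correctly ground the step in the stability of the Fredholm property of a bounded quadratic form under congruence by a bounded Hilbert-space isomorphism, together with the explicit verification that the parallel, orthonormal nature of $\EE$ makes $I_s$ literally the transported form with no dropped connection terms.
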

\begin{proof}
We observe that, for each $s \in [0,+\infty)$ the form $I_s$ is the push-forward by the linear isomorphism $\Psi$ of $\mathcal I_s$. Now the conclusion of this result is a consequence of   Proposition \ref{thm:famiglia-Fredholm} and the fact that the spectral flow for  a  generalized family of Fredholm quadratic forms doesn't depend on the chosen trivialization. (Cfr. Section \ref{sec:spectral-flow} for further details). This concludes the proof. 
\end{proof}
The following result is crucial in the well-posedness of the spectral index. 
\begin{lem}\label{thm:non-degenerate-s_0-forme}
Under the above notation, there exists $s_0 \in [0,+\infty)$ large enough such that for every $s \geq s_0$, the form 
  $I_s$ given in Equation \eqref{eq:path-index-2} is non-degenerate (in the sense of bilinear forms).
\end{lem}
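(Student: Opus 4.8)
The plan is to reduce non-degeneracy of $I_s$ to the invertibility of a $2n\times2n$ matrix depending \emph{analytically} on the parameter $\eps:=(1+s)^{-1}$, and then to exploit the fact that a real-analytic function on an interval is either identically zero or has isolated zeros. First I would describe $\ker I_s$ (in the sense of bilinear forms) concretely: integrating by parts in \eqref{eq:path-index-2} as in \eqref{eq:index-form-on-manifold-2}, a vector $u$ lies in the kernel of $I_s$ iff it is a (smooth, after the usual bootstrap) solution of
\begin{equation*}
-\dt\big[(1+s)P(t)u'(t)+Q(t)u(t)\big]+\trasp Q(t)\,u'(t)+R(t)u(t)=0,
\end{equation*}
with $u(0)=Au(T)$ and the natural boundary condition $(1+s)P(T)u'(T)+Q(T)u(T)=\trasp A\big[(1+s)P(0)u'(0)+Q(0)u(0)\big]$. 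Putting $\eta:=(1+s)Pu'+Qu$ and $\eps:=(1+s)^{-1}$ this becomes the first-order system $\zeta'=\mathcal A_\eps(t)\,\zeta$ for $\zeta=(u,\eta)$, where
\begin{equation*}
\mathcal A_\eps(t)=\begin{bmatrix}0&0\\ R(t)&0\end{bmatrix}+\eps\begin{bmatrix}-P^{-1}Q & P^{-1}\\ -\trasp QP^{-1}Q & \trasp QP^{-1}\end{bmatrix}(t),
\end{equation*}
together with the twisted-periodic condition $\zeta(0)=A_d\zeta(T)$, $A_d$ as in \eqref{eq:Ad} (using $\trasp A=A^{-1}$). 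The decisive feature of rescaling $P$ by $(1+s)$ is that $\eps$ multiplies the \emph{whole} perturbation term, so $\eps\mapsto\mathcal A_\eps$, and hence $\eps\mapsto\gamma_\eps$ (the fundamental matrix, $\gamma_\eps(0)=\Id$), is a \emph{regular} perturbation, not a singular one; moreover $\mathcal A_\eps(t)\in\ssp(2n)$ and, by \eqref{eq:condition-of-P-after-trivialization} and $A\in\OO(n)$, $A_d\in\Sp(2n)$, so $A_d\gamma_\eps(T)\in\Sp(2n)$.

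Thus $I_s$ is non-degenerate iff $1\notin\spec\big(A_d\gamma_\eps(T)\big)$, i.e. iff $g(\eps):=\det\big(A_d\gamma_\eps(T)-\Id\big)\ne0$. Since $\gamma_\eps(T)$ solves a linear ODE whose coefficients depend affinely — hence real-analytically — on $\eps$, the function $g$ is real-analytic on a neighbourhood of $[0,1]$. At $\eps=0$ one has $\gamma_0(T)=\left[\begin{smallmatrix}\Id&0\\ \int_0^TR(t)\,dt&\Id\end{smallmatrix}\right]$, so $g(0)=\big(\det(A-\Id)\big)^2$. If $A-\Id$ is invertible — automatic when $1\notin\spec(A)$ — then $g(0)>0$, so by continuity $g(\eps)\ne0$ for $\eps\in[0,\eps_0)$ and one takes $s_0:=\eps_0^{-1}-1$. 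If $A-\Id$ is not invertible — in particular in the orientation-preserving case, where $A=\Id$ — then $g(0)=0$, and one argues instead: a real-analytic $g\not\equiv0$ has isolated zeros, hence there is $\eps_0>0$ with $g(\eps)\ne0$ for $0<\eps<\eps_0$, and again $s_0:=\eps_0^{-1}-1$ works.

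The main obstacle is precisely this last step, $g\not\equiv0$, i.e. ruling out that $I_s$ be degenerate for \emph{every} $s\ge0$. I would treat it by a Lyapunov--Schmidt reduction. Let $\alpha_{\mathrm{op}}$ be the bounded self-adjoint operator on $H^1_A([0,T],\R^n)$ representing $\alpha(x)[u,v]=\int_0^T\langle P(t)u'(t),v'(t)\rangle\,dt$; as in the proof of Proposition~\ref{thm:famiglia-Fredholm}, $\alpha_{\mathrm{op}}$ differs from an invertible operator by a compact one, hence is Fredholm of index $0$, so $H^1_A([0,T],\R^n)=\ker\alpha_{\mathrm{op}}\oplus(\ker\alpha_{\mathrm{op}})^\perp$ with $\dim\ker\alpha_{\mathrm{op}}<\infty$ and $\alpha_{\mathrm{op}}$ invertible and with spectrum bounded away from $0$ on $(\ker\alpha_{\mathrm{op}})^\perp$. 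Writing $q^A_s=(1+s)\alpha+\text{(weakly continuous part)}$ and solving the $(\ker\alpha_{\mathrm{op}})^\perp$-component of $q^A_su=0$ by a Neumann series (the factor $1+s$ makes the relevant operator invertible for $s\gg0$), one expresses that component in terms of the $\ker\alpha_{\mathrm{op}}$-component with a correction of size $O\big((1+s)^{-1}\big)$, analytic in $\eps$; substituting back, the equation reduces to a finite-dimensional one $\big(\mathcal Q^h|_{\ker\alpha_{\mathrm{op}}}+O((1+s)^{-1})\big)n=0$, since $q^A_s$ restricted to $\ker\alpha_{\mathrm{op}}$ equals $\mathcal Q^h$ there and does not depend on $s$. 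Consequently $I_s$ is non-degenerate for $s\gg0$, and $g\not\equiv0$, as soon as $\mathcal Q^h$ is non-degenerate on the finite-dimensional space $\ker\alpha_{\mathrm{op}}$; this last finite-dimensional non-degeneracy — which holds trivially, e.g., when $x$ is a non-degenerate critical point of $\mathbb E$ — is the geometric heart of the statement and the step requiring the most care. Overall I would carry out, in order: (1) the first-order reduction and its boundary conditions; (2) symplecticity and analytic dependence on $\eps$; (3) the dichotomy according to whether $A-\Id$ is invertible; (4) the Lyapunov--Schmidt reduction and the finite-dimensional non-degeneracy.
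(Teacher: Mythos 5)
The paper proves this lemma by a direct coercivity estimate, not by the analytic-in-$\eps$ / ODE-reduction route you take. Concretely, the paper assumes $I_s$ degenerate for arbitrarily large $s$, takes a nonzero $u$ in the kernel (so $I_s(u,y)=0$ for all $y\in H^1_A$), tests with the admissible vector field $y=Pu$ (admissible because $P(0)=AP(T)\trasp A$, Equation~\eqref{eq:condition-of-P-after-trivialization}), and then Cauchy--Schwarz gives
\[
0=I_s(u,Pu)\geq\int_0^T\Big[\|Pu'\|^2-C_1\|Pu\|\,\|Pu'\|+(s-C_2)\|Pu\|^2\Big]\,dt,
\]
which is strictly positive for $u\ne0$ once $s>C_2+C_1^2/4$, a contradiction. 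This is short, uniform in $s$, and gives an explicit threshold $s_0$. Your proposal is genuinely different and, as written, has a real gap at its central step.

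The gap is exactly where you flag it: ruling out $g\equiv0$, equivalently the non-degeneracy of the reduced form after the Lyapunov--Schmidt step. You state that the required non-degeneracy of $\mathcal Q^h$ on $\ker\alpha_{\mathrm{op}}$ ``holds trivially, e.g., when $x$ is a non-degenerate critical point of $\mathbb E$,'' but this is false and, more importantly, not what the lemma asks for. First, the restriction of a non-degenerate bilinear form to a subspace is in general degenerate (e.g.\ $\left[\begin{smallmatrix}0&1\\1&0\end{smallmatrix}\right]$ restricted to the first coordinate axis), so non-degeneracy of $x$ does not buy you the finite-dimensional non-degeneracy. Second, the lemma must hold for \emph{every} $T$-periodic orbit $x$, degenerate or not, since otherwise the spectral index in Definition~\ref{def:spectral-index} would be well-defined only for non-degenerate critical points. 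Third, that finite-dimensional non-degeneracy is merely \emph{sufficient} for $g\not\equiv0$: the Schur complement is $\mathcal Q^h|_{\ker\alpha_{\mathrm{op}}}+O(\eps)$, and the $O(\eps)$ correction could in principle rescue non-degeneracy even when the leading term is degenerate, but then one needs yet another argument to see that the analytic germ is nontrivial. So the dichotomy in your step (3) handles only the case $\det(A-\Id)\ne0$; in the opposite case (which includes the orientation-preserving case $A=\Id$ central to Theorem~\ref{thm:instability theorem for non-autonomous case}), the argument does not close.

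What the paper's approach buys, and yours does not as written: (i) it avoids the analyticity-of-zeros dichotomy altogether and hence never needs $g\not\equiv0$; (ii) it gives non-degeneracy for \emph{every} $s\geq s_0$ with an $s_0$ computable from $\|P'P^{-1}\|_\infty$, $\|QP^{-1}\|_\infty$, $\|RP^{-1}\|_\infty$; (iii) it works uniformly in $A$, with no case split on $\ker(A-\Id)$. Your approach, if completed, would have the merit of exhibiting the kernel as a twisted-periodic ODE problem and of making the analytic dependence on $\eps=(1+s)^{-1}$ explicit (useful elsewhere in the paper, e.g.\ when comparing to $\mathcal A_s$ and $\psi_{c,s}$), but to close the gap you would essentially have to reprove the paper's coercivity estimate, either directly (take $u\ne0$ in the kernel and pair against $Pu$) or in the reduced finite-dimensional setting, which defeats the purpose of the reduction. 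A secondary remark: the reduction to $1\notin\spec(A_d\gamma_\eps(T))$, the computation of $\gamma_0(T)$ and $g(0)=\det(A-\Id)^2$, and the symplecticity of $\mathcal A_\eps$ are all correct, so the scaffolding is sound; only the load-bearing step is missing.
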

\begin{proof}
We argue by contradiction and we assume that for every $s_0 \geq 0$ there exists $s \geq s_0$ such that  $I_s$ is degenerate. 
Thus we get
\begin{multline}\label{eq:degenerate condition for index form}
0\equiv I_s(x,y)=\int_{ 0 }^{ T }\left\langle -\dfrac{d}{dt}\big(P(t) x'(t)+ Q(t)x(t)\big)+ \trasp{Q}(t)x'(t)+R(t)x(t)+sP(t)x(t),y(t)\right\rangle dt\\
+\langle P(t) x'(t)+ Q(t)x(t),y(t)\rangle|_{t=0}^T\\
=\int_{ 0 }^{ T }\langle P(t) x'(t), y'(t)\rangle+\langle Q(t)x(t), y'(t)\rangle+\langle \trasp{Q}(t) x'(t),y(t)\rangle+\langle R(t)x(t),y(t)\rangle +\langle sP(t)x(t),y(t)\rangle dt.
\end{multline}
We let $y(t)\=P(t)x(t)$ and we observe that as direce consequence of  Equation~\eqref{eq:condition-of-P-after-trivialization} it is admissible (meaning that $y$ belongs to $H^1_A$). Now, by replacing $y$ into Equation~\eqref{eq:degenerate condition for index form} with $y=Px$,  we get 
\begin{multline}\label{eq:inequality of non-degenerate condition for index form}
0\equiv I(x,Px)
=\int_{ 0 }^{ T }\Big[\langle P(t)x'(t),P(t) x'(t)\rangle+\langle  P(t)x'(t), P'(t)x(t)\rangle+\langle Q(t)x(t), P'(t)x(t)\rangle \Big.\\ \Big.+\langle Q(t)x(t), P(t)x'(t)\rangle+\langle \trasp{Q}(t) x'(t),P(t)x(t)\rangle\Big.\\\Big.+\langle R(t)x(t),P(t)x(t)\rangle  +\langle sP(t)x(t),P(t)x(t)\rangle \Big]dt\\
\geq\int_{ 0 }^{ T }\Big[\Vert P(t)x'(t)\Vert^2-(\Vert P'(t)P^{-1}(t)\Vert+\Vert Q(t)P^{-1}(t)\Vert\Vert P'(t)P^{-1}(t)\Vert\Big.\\
\Big.+\Vert \trasp{Q}(t)P^{-1}(t)\Vert)\Vert P(t)x(t)\Vert \Vert P(t)x'(t) \Vert\\
 +(s-\Vert R(t)P^{-1}(t)\Vert- \Vert Q(t)P^{-1}(t) \Vert-\Vert  P'(t)P^{-1}(t)\Vert)\Vert P(t)x(t)\Vert^2 \Big]\, dt>0
 \end{multline}
for $s$ large enough. By this contradiction we conclude the proof.
\end{proof}
\begin{prop}\label{thm:spectral-index-well-defined}
Let $x \in \ptl$. Then the spectral index  given in Definition \ref{def:spectral-index} is well-defined.
\end{prop}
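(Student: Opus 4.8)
The plan is to pin down the only ambiguity left open by Definition \ref{def:spectral-index}, namely the dependence on the upper parameter $s_0$; the remaining ingredient, that $s\mapsto\mathcal Q_s$ is an honest path of Fredholm quadratic forms on $\mathcal H(x)$ and hence possesses a spectral flow over any compact parameter interval (through the conventions recalled in Section \ref{sec:spectral-flow}), is already provided by Proposition \ref{thm:famiglia-Fredholm}. Moreover, since Definition \ref{def:spectral-index} is phrased intrinsically on $\mathcal H(x)$, no orthonormal parallel frame enters it; by Lemma \ref{thm:riduzione-Hilbert} together with the invariance of the spectral flow of a generalized family of Fredholm quadratic forms under push-forward along a path of Hilbert-space isomorphisms, the integer obtained after trivializing on $H^1_A([0,T],\R^n)$ equals the intrinsic one, so there is nothing to check on that side.

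For the $s_0$-independence, first I would invoke Lemma \ref{thm:non-degenerate-s_0-forme} to fix $\bar s_0\geq0$ such that $\mathcal Q_s$ is non-degenerate for every $s\geq\bar s_0$; as each $\mathcal Q_s$ is Fredholm, non-degeneracy is here equivalent to $\ker\mathcal Q_s=\{0\}$, i.e. to invertibility of the associated bounded self-adjoint Fredholm operator. Now let $\bar s_0\leq s_1\leq s_2$ be two admissible choices of the upper parameter. The restricted path $[s_1,s_2]\ni s\mapsto\mathcal Q_s$ consists entirely of non-degenerate Fredholm quadratic forms, so --- by the basic property of the spectral flow that a path which stays non-degenerate (equivalently, invertible) has vanishing spectral flow, no eigenvalue being able to cross $0$ --- we get $\spfl(\mathcal Q_s,\ s\in[s_1,s_2])=0$. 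Invoking the concatenation (path-additivity) property of the spectral flow,
\[
\spfl\big(\mathcal Q_s,\ s\in[0,s_2]\big)=\spfl\big(\mathcal Q_s,\ s\in[0,s_1]\big)+\spfl\big(\mathcal Q_s,\ s\in[s_1,s_2]\big)=\spfl\big(\mathcal Q_s,\ s\in[0,s_1]\big),
\]
and therefore the right-hand side of \eqref{eq:spectral-index-manifold} does not depend on $s_0$ as long as $s_0\geq\bar s_0$. Hence $\ispec(x)\in\Z$ is well-defined.

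The one point requiring a little care is the vanishing of the spectral flow along $[s_1,s_2]$: one must be sure that ``non-degenerate at every parameter'' suffices even though $\mathcal Q_{s_1}$ and $\mathcal Q_{s_2}$ differ. This is exactly the standard fact that a path of bounded self-adjoint Fredholm operators that stays invertible has zero spectral flow (such a path is homotopic, rel its invertible endpoints, to one along which no eigenvalue touches $0$); I would simply cite the corresponding statement from Section \ref{sec:spectral-flow}. This concludes the proof.
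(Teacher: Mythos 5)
Your proof is correct and follows essentially the same route as the paper: Fredholmness of the path via Proposition \ref{thm:famiglia-Fredholm}, trivialization-independence via Lemma \ref{thm:riduzione-Hilbert} and the cogredience invariance of the spectral flow, and the $s_0$ ambiguity handled via Lemma \ref{thm:non-degenerate-s_0-forme}. The one thing you do better than the paper is to make the $s_0$-independence explicit: the paper merely asserts that once some $s_0$ with $\mathcal Q_s$ non-degenerate for all $s\geq s_0$ exists, the integer $\spfl(\mathcal Q_s,\ s\in[0,s_0])$ is well-defined, whereas you actually carry out the path-additivity step, $\spfl(\mathcal Q_s,\ s\in[0,s_2])=\spfl(\mathcal Q_s,\ s\in[0,s_1])+\spfl(\mathcal Q_s,\ s\in[s_1,s_2])$, and justify the vanishing of the middle term by the standard fact that a path staying invertible (equivalently, non-degenerate) has zero spectral flow. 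That small supplement is exactly what a careful reader needs, and it is fully consistent with the conventions of Section \ref{sec:spectral-flow}.
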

\begin{proof}
We start to observe that as consequence of Lemma \ref{thm:riduzione-Hilbert},  $s\mapsto q_s$ is a path of Fredholm quadratic forms on $H^1_A([0,T], \R^n)$. Moreover, by Lemma \ref{thm:non-degenerate-s_0-forme}, there exists $s_0 \in [0,+\infty)$ such that $q_s$ is non-degenerate for every $s \geq s_0$ and hence the integer $\spfl(q_s, s \in [0,s_0])$ is well-defined.  

The conclusion follows by observing that $q_s$ is the push-forward by $\Psi$  of the Fredholm quadratic form  $\mathcal Q_s$ and by the fact that the spectral flow of a generalized family of Fredholm quadratic forms on the (trivial) Hilbert bundle $[0,s_0] \times \mathcal H(x)$ is independent on the trivialization. This concludes the proof. 	
	\end{proof}
The next step is to prove that the spectral index actually can be reduced to the spectral flow of a path of second order self-adjoint Sturm-Lioville operators. 

First of all we observe that the boundary value problem appearing  in Equation~\eqref{eq:path-index}, reduces to the Sturm-Liouville boundary value problem in $\R^n$ given by 
\begin{equation}\label{eq:Sturm-Liouville equation after trivialization}
\begin{cases}
	-\dfrac{d}{dt}\left(P(t)\dfrac{d}{dt}u(t)+Q(t)u(t)\right)+\trasp{Q}(t)\dfrac{d}{dt}u(t)+\big(R(t)+sP(t)\big) u(t)=0, \qquad t \in (0,T)\\
 \\
	u(0)=A u(T), \qquad u'(0)= Au'(T).
\end{cases}	
\end{equation}
Now, for any $s \in [0,+\infty)$,  let $ s \mapsto \mathcal A_s$ be  the  one parameter family of operators pointwise defined by 
\begin{equation}\label{eq:definition of second order operators}
\mathcal A_s: \mathcal D\subset L^2([0,T], \R^n) \to L^2([0,T], \R^n),  \quad  
\mathcal A_s\=-\dfrac{d}{dt}\left(P(t)\dfrac{d}{dt}+Q(t)\right)+\trasp{Q}\dfrac{d}{dt} +R(t)+sP(t)
\end{equation}
where $\mathcal D\=\Set{u \in H^2([0,T], \R^n)| u(0)=Au(T), u'(0)= Au' (T)}$. It is worth to observe that $\mathcal A_s$ is an unbounded  self-adjoint (in $L^2$) Fredholm operator with dense domain $\mathcal D$.

\begin{cor}\label{thm:non-degenerate-s_0-operatori}
Let  $s \in [0, +\infty)$ and let $\mathcal{A}_s$ denote the Sturm-Liouville operator defined  in Equation~\eqref{eq:definition of second order operators}.  Then, there exists $s_0 \geq 0$ large enough such that  $\mathcal{A}_s$ is  non-degenerate, namely $\ker \mathcal A_s =\{0\}$  for every $s\geq s_0$.
\end{cor}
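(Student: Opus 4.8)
The plan is to reduce the statement to Lemma~\ref{thm:non-degenerate-s_0-forme} by showing that the kernel of the unbounded operator $\mathcal A_s$ coincides with the null space of the bounded bilinear form $I_s$ on the twisted Sobolev space $H^1_A([0,T],\R^n)$.

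First I would observe that, by definition of $\mathcal D$ and of $\mathcal A_s$, an element $u\in\ker\mathcal A_s$ is exactly a solution $u\in\mathcal D$ of the Sturm--Liouville boundary value problem~\eqref{eq:Sturm-Liouville equation after trivialization}. For such a $u$ and an arbitrary test field $v\in H^1_A([0,T],\R^n)$, the integration by parts performed in~\eqref{eq:degenerate condition for index form} gives
\begin{equation*}
I_s[u,v]=\int_0^T\big\langle \mathcal A_s u(t),\,v(t)\big\rangle\,dt+\Big[\big\langle P(t)u'(t)+Q(t)u(t),\,v(t)\big\rangle\Big]_{t=0}^{T}.
\end{equation*}
The first summand vanishes since $\mathcal A_s u=0$, and the boundary term $\big[\langle Pu'+Qu,v\rangle\big]_0^{T}$ vanishes as well: this is precisely the compatibility check --- using the transformation rules~\eqref{eq:condition-of-P-after-trivialization} for $P$ and $Q$, the twisted boundary conditions $u(0)=Au(T)$, $u'(0)=Au'(T)$, $v(0)=Av(T)$, and the orthogonality $A\in\OO(n)$ --- that was carried out to show $y=Px$ is admissible in the proof of Lemma~\ref{thm:non-degenerate-s_0-forme}. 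Hence every $u\in\ker\mathcal A_s$ lies in the null space of $I_s$.

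Conversely, if $u\in H^1_A([0,T],\R^n)$ satisfies $I_s[u,v]=0$ for all $v$, then testing against $v\in C^\infty_c\big((0,T),\R^n\big)$ shows that $u$ solves the differential equation in~\eqref{eq:Sturm-Liouville equation after trivialization} in the weak sense; a standard elliptic bootstrap --- legitimate because $P(t)$ is invertible for every $t$ by~(N1) --- upgrades $u$ to an $H^2$ (in fact smooth) vector field, and the vanishing of the remaining boundary term for all admissible $v$ then forces the natural boundary condition $u'(0)=Au'(T)$. Thus $u\in\mathcal D$ and $\mathcal A_s u=0$, so the null space of $I_s$ equals $\ker\mathcal A_s$; in particular $\mathcal A_s$ is non-degenerate if and only if $I_s$ is non-degenerate as a bilinear form.

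Finally, Lemma~\ref{thm:non-degenerate-s_0-forme} furnishes $s_0\geq0$ such that $I_s$ is non-degenerate for every $s\geq s_0$, and by the equivalence just established $\ker\mathcal A_s=\{0\}$ for all such $s$, which is the assertion. The proof is essentially a dictionary between the weak and strong formulations of the problem, so I do not expect a serious obstacle; the only point deserving care is the cancellation of the endpoint terms in the $A$-twisted spaces $H^1_A$ and $\mathcal D$, which is exactly where the compatibility relations~\eqref{eq:condition-of-P-after-trivialization} --- ultimately, the choice of an orthonormal and therefore $A$-equivariant trivialization --- come into play.
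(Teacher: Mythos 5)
Your proof is correct and follows essentially the same route as the paper: integrate by parts to show that any $u\in\ker\mathcal A_s$ lies in the null space of the bilinear form $I_s$, then invoke Lemma~\ref{thm:non-degenerate-s_0-forme}. The paper phrases this as an argument by contradiction and uses only the inclusion $\ker\mathcal A_s\subseteq\{u : I_s[u,\cdot]\equiv 0\}$; your converse direction (null space of $I_s$ contained in $\ker\mathcal A_s$ via weak formulation and bootstrap) is correct but superfluous for the stated corollary.
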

\begin{proof}
We argue by contradiction and we assume that for every $s_0 \geq 0$ there exists $s \geq s_0$ such that  $	ker \mathcal{A}_s \neq 0$. Thus,  there exists $x\in \mathcal D$ such that $\mathcal{A}_s x=0$. Thus, for every $y \in \mathcal D$, we get 
\begin{multline}\label{eq:degenerate condition for index form}
0\equiv I_s(x,y)=\int_{ 0 }^{ T }\left\langle -\dfrac{d}{dt}\big(P(t) x'(t)+ Q(t)x(t)\big)+ \trasp{Q}(t)x'(t)+R(t)x(t)+sP(t)x(t),y(t)\right\rangle dt\\
+\langle P(t) x'(t)+ Q(t)x(t),y(t)\rangle|_{t=0}^T\\
=\int_{ 0 }^{ T }\Big[\langle P(t) x'(t), y'(t)\rangle+\langle Q(t)x(t), y'(t)\rangle+\langle \trasp{Q}(t) x'(t),y(t)\rangle\\+\langle R(t)x(t),y(t)\rangle +\langle sP(t)x(t),y(t)\rangle\Big]\, dt.
\end{multline}
The conclusion readily follows by Corollary \ref{thm:non-degenerate-s_0-forme}. 
\end{proof}

\begin{prop}\label{thm:definition of spectral index}
Let $x\in \ptl$ and $s_0$ be given in Proposition \ref{thm:famiglia-Fredholm}.   Then the following equality holds
 \begin{equation}\label{eq:definition of spec index}
\ispec(x)=\spfl(\mathcal{A}_s, s\in[0,s_0]).
\end{equation}
\end{prop}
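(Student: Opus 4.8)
The plan is to identify $\ispec(x) = \spfl(\mathcal Q_s, s\in[0,s_0])$ with $\spfl(\mathcal A_s, s\in[0,s_0])$ by passing through the intermediate path $q_s$ of Fredholm quadratic forms on $H^1_A([0,T],\R^n)$ and then relating that to the path of self-adjoint operators $\mathcal A_s$. First I would recall, from Lemma \ref{thm:riduzione-Hilbert} and Proposition \ref{thm:spectral-index-well-defined}, that $q_s$ is the push-forward of $\mathcal Q_s$ under the linear isomorphism $\Psi$, that the spectral flow is invariant under such a trivialization/conjugation, and that both endpoints can be taken non-degenerate for $s_0$ large by Lemma \ref{thm:non-degenerate-s_0-forme}; hence $\ispec(x) = \spfl(q_s, s\in[0,s_0])$.

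Next I would make explicit the standard correspondence between a path of Fredholm quadratic forms and a path of self-adjoint Fredholm operators. Each $I_s$ on $H^1_A([0,T],\R^n)$ is bounded, so by the Riesz representation theorem with respect to the inner product $\langle\langle\cdot,\cdot\rangle\rangle_A$ there is a bounded self-adjoint operator $\mathbb B_s$ on $H^1_A$ with $I_s[u,v] = \langle\langle \mathbb B_s u, v\rangle\rangle_A$; by Lemma \ref{thm:riduzione-Hilbert} each $\mathbb B_s$ is Fredholm, and $\spfl(q_s) = \spfl(\mathbb B_s)$ by definition of the spectral flow of a path of Fredholm forms. I would then show that $\mathbb B_s$ and $\mathcal A_s$ are cogredient in the appropriate sense: integrating by parts in \eqref{eq:path-index-2} shows $I_s[u,v] = \langle \mathcal A_s u, v\rangle_{L^2}$ whenever $u\in\mathcal D$ (the boundary terms vanish precisely because of the boundary conditions $u(0)=Au(T)$, $u'(0)=Au'(T)$ encoded in $H^1_A$ and $\mathcal D$), and more generally that $\ker I_s = \ker \mathcal A_s$, that $u\in\mathcal D$ iff $\mathbb B_s u$ lies in the appropriate regularity class, and that the finite-dimensional spectral subspaces of $\mathbb B_s$ near $0$ coincide (with equal signatures) with the corresponding spectral subspaces of $\mathcal A_s$ near $0$. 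This is the content of the general principle that the spectral flow of the quadratic forms associated to a path of Sturm--Liouville problems equals the spectral flow of the associated unbounded self-adjoint operators; I would invoke it citing the relevant references in Section \ref{sec:spectral-flow}, after checking the hypotheses: $\mathcal A_s$ is self-adjoint with compact resolvent hence Fredholm, the domain $\mathcal D$ is $s$-independent, and $s\mapsto \mathcal A_s$ is a continuous (indeed affine, $\mathcal A_s = \mathcal A_0 + sP(t)$) path of such operators.

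With both endpoints non-degenerate (Corollary \ref{thm:non-degenerate-s_0-operatori} for $\mathcal A_{s_0}$, and $\mathcal A_0$ is non-degenerate exactly when $x$ is, but the spectral flow does not require the initial point to be non-degenerate), the homotopy invariance and path-additivity of the spectral flow, together with the agreement of the crossing data of $q_s$ and $\mathcal A_s$ at every $s$, give $\spfl(q_s, s\in[0,s_0]) = \spfl(\mathcal A_s, s\in[0,s_0])$, and combining with the first paragraph yields \eqref{eq:definition of spec index}. The main obstacle is the second step: making rigorous that the spectral flow computed from the weak (quadratic-form) side on the $H^1$-space agrees with the one computed from the unbounded-operator side on $L^2$. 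The subtle point is that the two operators live on different Hilbert spaces with different inner products, so one must check that passing between them — e.g. via the bounded inclusion $H^1_A\hookrightarrow L^2$ or via a Birman--Schwinger-type reduction — preserves the signatures of the relevant finite-dimensional spectral projections, and in particular that the boundary conditions match up so that no eigenvalue is lost or gained in the reduction. Once this identification is in place, the rest is a formal application of the standard properties of the spectral flow.
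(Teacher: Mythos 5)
Your approach is correct and is precisely the argument the paper delegates to its reference: the paper's own ``proof'' of Proposition \ref{thm:definition of spectral index} is a one-line citation to [BHPT19, Prop.~4.12], and — as one can read off from the sketch the paper gives for the analogous Lemma \ref{lem:equivalence between first and second order spectral flow} — that reference proceeds by exactly the crossing-form matching you outline. Your three steps (push-forward by $\Psi$ to replace $\mathcal Q_s$ by $q_s$; Riesz representation on $\big(H^1_A,\langle\langle\cdot,\cdot\rangle\rangle_A\big)$ to realize $q_s$ as a bounded self-adjoint Fredholm path $\mathbb B_s$; identification of crossing instants and crossing forms between $\mathbb B_s$ and the unbounded Sturm--Liouville path $\mathcal A_s$ via elliptic regularity and integration by parts) are the right ones, and the endpoint non-degeneracy is supplied by Lemma \ref{thm:non-degenerate-s_0-forme} and Corollary \ref{thm:non-degenerate-s_0-operatori} exactly as you say.

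One caveat worth noting, since you state ``integrating by parts in \eqref{eq:path-index-2} shows $I_s[u,v]=\langle \mathcal A_s u,v\rangle_{L^2}$'': this holds only if $\alpha$ is the \emph{lower-order} form $\alpha[u,v]=\int_0^T\langle P(t)u(t),v(t)\rangle\,dt$, not the displayed $\int_0^T\langle P(t)u'(t),v'(t)\rangle\,dt$ appearing in \eqref{eq:path-index} and \eqref{eq:path-index-2}. With the displayed $\alpha$ the $s$-dependence would be in the principal symbol, $I_s$ would not reduce to $\mathcal A_s=\mathcal A_0+sP$, and the crossing forms $\dot q_{s_*}|_{\ker q_{s_*}}=\int\langle Pu',u'\rangle$ and $\langle \dot{\mathcal A}_{s_*}u,u\rangle_{L^2}=\int\langle Pu,u\rangle$ would differ, so the matching step would fail. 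Everything else in the paper — the computation in Lemma \ref{thm:non-degenerate-s_0-forme}, the definition \eqref{eq:definition of second order operators} of $\mathcal A_s$, and the matrices $B_{c,s}$ in \eqref{eq:bcs} — consistently uses the lower-order perturbation $sP$, so the displayed $\alpha$ is a typo that you have silently (and correctly) repaired; it is worth making this explicit, since the agreement of the crossing forms (and hence the entire identification $\spfl(q_s)=\spfl(\mathcal A_s)$) rests on it.
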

\begin{proof}
For the proof of this result we refer the interested reader to \cite[Proposition 4.12]{BHPT19}. 
\end{proof}


\subsection{Geometrical index and linearized Poincaré map}\label{subsec:geo-index}

The aim of this paragraph is to associate to each $T$-periodic solution $x \in \ptl$ an integer defined in terms of the  intersection index $\iCLM$. (Cfr. Appendix \ref{sec:Maslov} for the definition and basic properties). 

By trivializing the pull-back bundle $x^*(TM)$ over $TM$ through the frame $\EE$ defined in Subsection \ref{subsec:ortho-triv}, the Jacobi deviation equation along $x$ reduces to the Sturm-Liouville system as given in  Equation~\eqref{eq:Sturm-Liouville equation after trivialization}. By setting   $y(t)=P(t) u'(t)+Q(t)u(t)$ and $z(t)=\trasp{(y(t),u(t))}$ we finally get 
\begin{multline}\label{eq:Hamilton system}
\begin{cases}
z'(t)=JB(t)z(t), \qquad t \in [0,T]\\
z(0)=A_d z(T)
\end{cases}\quad 
\textrm{ where } \\
B(t)\=\begin{bmatrix} P^{-1}(t) &- P^{-1}(t)Q(t)\\-Q(t)P^{-1}(t)& \trasp{Q}(t)P^{-1}(t)Q(t)-R(t) \end{bmatrix} 
\end{multline} 
and $A_d$ has been defined in Equation~\eqref{eq:Ad}.
\begin{rem}
We would like to stress the reader on the notational aspect introduced above in this paragraph. More precisely, on the fact that, for computational convenience and for uniformity with the literature on the subject, in the first component of $z$ is placed the momentum coordinate and in the second the position.     
\end{rem}
In the standard symplectic space $(\R^{2n}, \omega)$, we denote by $J$ the standard symplectic matrix defined by $J=\begin{bmatrix} 0&-\Id\\ \Id &0\end{bmatrix}$. Thus the symplectic form $\omega$ can be  represented with respect to the Euclidean product $\langle\cdot, \cdot\rangle$ by $J$ as follows $\omega(z_1,z_2)=\langle J z_1,z_2\rangle$ for every $z_1, z_2 \in \R^{2n}$. 

Now, given $M \in \Sp(2n, \R)$, we denote by $\Gr(M)=\{(x,Mx)|x\in \R^{2n}\}$ its graph and we recall that  $\Gr(M)$ is a Lagrangian subspace of the symplectic  space $(\R^{2n} \times \R^{2n}, -\omega \times \omega)$.  
\begin{defn}\label{def:geometrical-index-NA}
Let $x$ be a $T$-periodic solution of Equation~\eqref{eq:EL},  $z_x$ be defined in Equation \eqref{eq:x-vs-z} and let us consider the path 
\begin{equation}
	\gamma_\Phi :[0,T] \to \Sp(2n, \R) \quad \textrm{ given  by }\quad  \gamma_\Phi(t)\= A_d[\Phi^E(t)]^{-1} D \phi_H^t(z_x(0))\Phi^E(0).
\end{equation}
We define the {\em geometrical index of $x$\/} as follows
\begin{equation}\label{eq:geo-na}
\igeo(x)\=\iCLM(\Delta,\Gr(\gamma_\Phi(t)), t\in[0,T])
\end{equation}
where the (RHS) in Equation~\eqref{eq:definition of maslov index}  denotes  the $\iCLM$ intersection index between the Lagrangian path $t \mapsto \Gr(\gamma_\Phi(t))$ and the Lagrangian path $\Delta\=\Gr(\Id)$.
\end{defn}
Let $x \in \ptl$ and $z_x$ be given in Equation \eqref{eq:x-vs-z}.
We can define the {\em linearized Poincaré map of $z_x$\/} as follows. 
\begin{multline}\label{eq:linearized-poincare-map}
\mathfrak P_{z_x}:  T_{x(0)}M\oplus T_{x(0)}^*M \to 
T_{x(0)}M\oplus T_{x(0)}^*M  \textrm{ is   given by 	}\\
\mathfrak P_{z_x}(\alpha_0, \delta_0)\=\bar A_d\trasp{\Big( \zeta(T),\bar P(T) \Ddt \zeta(T)+ \bar Q(T)\zeta(T)\Big)}\\ \quad \textrm{ for } \quad 
	\bar A_d \= \begin{bmatrix}
		\bar A & 0 \\ 0 & \bar A
	\end{bmatrix}
\end{multline}
where $\zeta$ is the unique vector field along $x$ such that $\zeta(0)=\alpha_0$ and  $\bar P(0) \Ddt \zeta(0)+ \bar Q(0)\zeta(0)=\delta_0$. Fixed points of $\mathfrak P_{z_x}$ corresponds to periodic vector fields along $z_x$. 
\begin{defn}\label{def:spectral-stability-manifold}
	We term $z_x \in \pth$ {\em spectrally stable\/} if the spectrum $\spec({\mathfrak P_{z_x}}) \subset \U$ where $\U\subset \C$ denotes the unit circle of the complex plane. Furthermore, if $\mathfrak P_{z_x}$ is also semi-simple, then $z_x$ is termed {\em linearly stable\/}.
\end{defn}
By pulling back the linearized Poincaré map defined in Equation~\eqref{eq:linearized-poincare-map} through the unitary  trivialization $\Phi^\EE$ of $z_x(TT^*M)$ over $[0,T]$ we get the map 
\begin{equation}\label{eq:linearized-poincare-euclidea}
	P^\EE: \R^n \oplus \R^n \to \R^n \oplus \R^n \textrm{ defined by } 
	P^\EE(y_0, u_0)=A_d\trasp{\big(Pu'(T)+ Q u(T),  u(T)\big)}
\end{equation}
where $z(t)=\big(y(t),u(t)\big)$ is the unique solution of the Hamiltonian system given in Equation~\eqref{eq:Hamilton system} such that $z(0)=(y_0, u_0)$.

Denoting by  $t \mapsto\psi(t)$ the fundamental solution of (linear) Hamiltonian system  given in Equation~\eqref{eq:Hamilton system}, then we get the geometrical index given in Definition \ref{def:geometrical-index-NA} reduces to  
\begin{equation}\label{eq:definition of maslov index}
\igeo(x)\=\iCLM(\Delta,\Gr(A_d\psi(t)), t\in[0,T]).
\end{equation}
Moreover,  a $T$-periodic solution $x$ of the Equation~\eqref{eq:EL} is  {\em linearly stable\/} if the symplectic matrix $A_d\psi(T)$ is linear stable.

Next result is one of the two main ingredients that we need for proving our main results and in particular relates the parity of the $\iomega{1}$-index to the linear instability of the periodic orbit. 
\begin{lem}\label{lem:instability by maslov index}
Let  $x\in\ptl$. Then the following implication holds
\[ 
\igeo(x) \textrm{ is odd } \Rightarrow x \textrm{ is linearly unstable }
\]
\end{lem}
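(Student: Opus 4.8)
The plan is to reduce the claim to a statement about the parity of an intersection index and then use a standard fact linking the parity of the Conley–Zehnder/Maslov-type index of a symplectic path to the spectrum of its endpoint. Concretely, recall from Equation~\eqref{eq:definition of maslov index} that $\igeo(x)=\iCLM(\Delta,\Gr(A_d\psi(t)),t\in[0,T])$, where $\psi$ is the fundamental solution of the linearized Hamiltonian system in Equation~\eqref{eq:Hamilton system} and $A_d\psi(T)$ is (conjugate to) the linearized Poincaré map $\mathfrak P_{z_x}$. Linear instability of $x$ means precisely that $A_d\psi(T)$ has an eigenvalue off the unit circle $\U$, equivalently that $\mathfrak P_{z_x}$ is not spectrally stable in the sense of Definition~\ref{def:spectral-stability-manifold}. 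So it suffices to show: if $A_d\psi(T)\in\Sp(2n,\R)$ has all its eigenvalues on $\U$, then $\igeo(x)=\iCLM(\Delta,\Gr(A_d\psi(t)),t\in[0,T])$ is even; contrapositively, $\igeo(x)$ odd forces an eigenvalue off $\U$.

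The key step is a parity argument for the $\iCLM$ index of the graph path. First I would translate $\iCLM(\Delta,\Gr(M(t)))$, with $M(t)=A_d\psi(t)$, into the Conley–Zehnder-type index $\iomega{1}$ of the symplectic path $t\mapsto M(t)$ via the standard relation between the CLM index of $(\Delta,\Gr(\cdot))$ and the Maslov-type index of a symplectic path (the diagonal $\Delta$ plays the role of the ``$\omega=1$'' comparison Lagrangian); this is exactly the correspondence recorded in the notation section and in the references \cite{APS08, MPP05, RS95}. Then I would invoke the classical parity formula for the Maslov-type index: for a symplectic path $M:[0,T]\to\Sp(2n,\R)$ with $M(0)=\Id$, one has
\[
(-1)^{\iomega{1}(M)-n}=\sgn\det\bigl(M(T)-\Id\bigr)\quad\text{whenever } \det(M(T)-\Id)\neq 0,
\]
and more generally the parity of $\iomega{1}(M)$ is governed by the sign of $\det(\Id-M(T))$, which in turn is determined by the eigenvalues of $M(T)$ lying in $(1,+\infty)$ together with those in $(-\infty,-1)$ counted with multiplicity. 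The crucial elementary fact is that if $M(T)$ is a symplectic matrix with spectrum entirely contained in $\U$, then the real eigenvalues that are $>1$ come in the pattern forced by symplecticity (eigenvalues come in quadruples $\lambda,\bar\lambda,\lambda^{-1},\bar\lambda^{-1}$, and real ones in pairs $\lambda,\lambda^{-1}$); hence the number of eigenvalues in $(1,+\infty)$ is zero and the sign of $\det(\Id-M(T))$ is constrained so that $\iCLM(\Delta,\Gr(M(t)))$ has a fixed parity. Carrying this bookkeeping through shows $\igeo(x)$ is even in the spectrally stable case.

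The main obstacle is the degenerate case: when $1$ is a Floquet multiplier of $\mathfrak P_{z_x}$, i.e.\ $\det(\Id-A_d\psi(T))=0$, the naive parity formula does not apply directly, since the endpoint of the graph path meets $\Delta$ nontrivially and the $\iCLM$ index conventions (half-integer crossing contributions at the endpoint, and the choice of which side of the wall one records) must be handled carefully. I would address this by a small perturbation argument: replace $M(T)$ by $M(T)e^{\eps J}$ or argue via the homotopy invariance and the stratification of $\Sp(2n,\R)$ by the quantity $\dim\ker(\Id-M)$, reducing to the non-degenerate case while controlling exactly how $\iCLM$ jumps across the singular cycle; the point is that a spectrally stable but degenerate endpoint still cannot produce an odd $\igeo(x)$ because perturbing it keeps all eigenvalues near $\U$ and hence keeps the relevant determinant sign fixed. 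Once this case distinction is cleanly organized, the implication ``$\igeo(x)$ odd $\Rightarrow$ $A_d\psi(T)$ has an eigenvalue off $\U$ $\Rightarrow$ $x$ linearly unstable'' follows, which is the assertion of the lemma.
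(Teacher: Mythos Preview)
Your overall strategy---prove the contrapositive, translate $\iCLM$ to the $\iomega{1}$-index, then invoke a parity criterion---matches the paper's. However, there are two genuine gaps.

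First, you misidentify linear stability with spectral stability. In Definition~\ref{def:spectral-stability-manifold}, linear stability requires both $\sigma(\mathfrak P_{z_x})\subset\U$ \emph{and} semi-simplicity; spectral stability is only the former. Your contrapositive should therefore assume $A_d\psi(T)$ is semi-simple with spectrum on $\U$, not merely that its spectrum lies on $\U$. This matters: your claim in the degenerate case (``perturbing it keeps all eigenvalues near $\U$ and hence keeps the relevant determinant sign fixed'') is false for a spectrally stable but non-semi-simple endpoint. For instance, with $n=1$ and $M=\left(\begin{smallmatrix}1&-1\\0&1\end{smallmatrix}\right)$ one has $\sigma(M)=\{1\}\subset\U$, yet $\det(e^{-\varepsilon J}M-\Id)=2-2\cos\varepsilon-\sin\varepsilon<0$ for small $\varepsilon>0$, so $e^{-\varepsilon J}M\in\Sp(2,\R)^-$. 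The paper avoids this by using the full linear-stability hypothesis via Lemma~\ref{prop:how to know in which component} (see also Lemma~\ref{lem:linear stable is in positive side}), whose proof genuinely needs semi-simplicity.

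Second, the parity formula you quote, $(-1)^{\iomega{1}(M)-n}=\sgn\det(M(T)-\Id)$, is for paths with $M(0)=\Id$. Here the path is $t\mapsto A_d\psi(t)$, which starts at $A_d\neq\Id$ in general. The paper handles this with the two-endpoint parity criterion of Lemma~\ref{lem:parity property}: a direct computation (Equation~\eqref{eq:computation of A in positive component}) shows $e^{-\varepsilon J}A_d\in\Sp(2n,\R)^+$, and then linear stability of the endpoint gives $e^{-\varepsilon J}A_d\psi(T)\in\Sp(2n,\R)^+$ as well, so both perturbed endpoints lie in the same component and $\iomega{1}(A_d\psi)$ is even. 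Your argument is missing this verification at the initial endpoint.
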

\begin{proof}
To prove this result it is equivalent to prove that if $x$ is linear stable then $\igeo(x)$ is even.

If $x$ is linear stable by the previous discussion, this is equivalent to the linear stability of   $A_d\psi(T)$.  By invoking Lemma \ref{prop:how to know in which component} , we get that  $e^{-\varepsilon J}A_d\psi(T)\in \Sp(2n,\R)^+$ for some sufficiently small $\varepsilon>0$.  

By direct computations we get that 
\begin{equation}\label{eq:computation of A in positive component}
\begin{aligned}
\det(e^{-\varepsilon J}A_d-\Id)&=\det(A_d-e^{\varepsilon J})=\det\begin{bmatrix}A-\cos\varepsilon \Id&\sin\varepsilon \Id\\-\sin\varepsilon \Id&A-\cos\varepsilon \Id\end{bmatrix}\\
&=(-1)^{n(n+1)}(\sin\varepsilon)^{2n}\det(\Id+\frac{(A-\cos\varepsilon \Id)^2}{(\sin\varepsilon)^2})>0.
\end{aligned}
\end{equation}
Then $e^{-\varepsilon J}A_d\in\Sp(2n,\R)^+$. By \cite[Proposition 3.1]{LZ00} it follows that 
\begin{equation}\label{eq:relation between long and clm maslov indices}
\igeo(x)=\iomega{1}\big(A_d\psi(t), t\in[0,T]\big).
\end{equation}
Being $\psi$ a fundamental solution, in particular $\psi(0)=\Id$ and so by these arguments we get that both endpoints $e^{-\varepsilon J}A_d, e^{-\varepsilon J}A_d\psi(T)$ of the  path $t\mapsto e^{-\varepsilon J}A_d\psi(t)$ belong to  $\Sp(2n,\R)^+$. By using the characterization of the parity of the $\iomega{1}$-index given in  Lemma \ref{lem:parity property}, we get that $\iomega{1}(A_d\psi)$ is even. Consequently, by Equation~\eqref{eq:relation between long and clm maslov indices}, we conclude that   $\igeo(x)$ is even. This concludes the proof.
\end{proof}


\section{Some technical lemmas and  proof of the main result}\label{sec:Instability criterion for non-autonomous Lagrangian system}

In this section we collect the main technical lemmas that we need for proving our  main result.

The first result we start with, insures that the Definition \ref{def:spectral-index} of the spectral index is well-posed, meaning that it doesn't depend upon the choice of $s_0$ (appearing in such a definition) as soon as $s_0$ is sufficiently large. 

The key idea behind the proof of the relation between the spectral index and the geometrical index is homotopy-theoretical in its own. 

We consider the  (unbounded) self-adjoint (in $L^2$) Fredholm operator having (dense) domain $\mathcal D$ 
\begin{equation}\label{eq:two parameter operator path}
\mathcal{A}_{c,s}=-\dfrac{d}{dt}\left(P(t)\dfrac{d}{dt}+c Q(t)\right)+c \trasp{Q}(t)\dfrac{d}{dt}+c R(t)+s P(t) \quad \textrm{ for } \  (c,s) \in [0,1]\times [0,s_0].
\end{equation}
As before, we associate to the second order differential operator $\mathcal A_{c,s}$ the first order (Hamiltonian) differential operator given by 
\begin{equation}\label{eq:two parameter operator path-first-order}
\mathcal{J}_{c,s}\=-J\dt-B_{c,s}(t): \widetilde{\mathcal D} \subset L^2([0,T], \R^{2n})\to L^2([0,T], \R^{2n}), \quad \textrm{ for } \  (c,s) \in [0,1]\times [0,s_0]
\end{equation}
where $\widetilde{\mathcal D}= \Set{z \in W^{1,2}([0,T], \R^{2n})| \big(z(0), z(T)\big) \in \Delta}$ and where 
%
%
\begin{equation}\label{eq:bcs}
B_{c,s}(t)=\begin{bmatrix} 
P^{-1}(t) &- c P^{-1}(t)Q(t)\\-c Q(t)P^{-1}(t)& c^2 \trasp{Q}(t) P^{-1}(t) Q(t) -c R(t)-s P(t) 
\end{bmatrix}. 
\end{equation}
\begin{note}
We shall denote by $\psi_{c,s}$ the fundamental solution of the Hamiltonian system 
\begin{equation}\label{eq:hs-general}
		z'(t)= J B_{c,s}(t) z(t), \qquad t \in [0,T].
\end{equation}
\end{note}
The following result is well-known and provides the relation between the spectral flow of the path $s \mapsto \mathcal A_{c,s}$ and the spectral flow of the path $s\mapsto \mathcal J_{c,s}$. 
\begin{lem}\label{lem:equivalence between first and second order spectral flow}
Under the above notation, the following equality holds
\begin{equation}\label{eq:equivalence between first and second order spectral flow}
\spfl(\mathcal{A}_{c,s}, s\in[0,s_0])=\spfl(\mathcal{J}_{c,s}, s\in[0,s_0]), \qquad \forall \,c\in[0,1].
\end{equation}
\end{lem}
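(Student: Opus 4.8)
The plan is to exploit the general equivalence between the spectral flow of a path of Sturm--Liouville operators and the spectral flow of the associated path of first order Hamiltonian operators, exactly as in the one-parameter situation already used in Proposition \ref{thm:definition of spectral index}. The point is that for each fixed $c \in [0,1]$ the statement is precisely an instance of that one-parameter result applied to the path $s \mapsto \mathcal A_{c,s}$ and its Hamiltonian counterpart $s \mapsto \mathcal J_{c,s}$, so the lemma is really a remark that the cited reduction holds uniformly in the frozen parameter $c$. Concretely, I would first recall the standard bijection: a vector field $u \in \mathcal D$ lies in $\ker \mathcal A_{c,s}$ if and only if, setting $y(t) \= P(t)u'(t) + cQ(t)u(t)$ and $z \= \trasp{(y,u)}$, one has $z'(t) = JB_{c,s}(t)z(t)$ with $z(0) = A_d z(T)$, i.e. $z \in \ker \mathcal J_{c,s}$ (here one uses that $B_{c,s}$ in Equation \eqref{eq:bcs} is exactly the matrix obtained by this Legendre-type substitution from the coefficients $P, cQ, cR + sP$). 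This identifies the nullities, and more is true: the identification is implemented by a fixed (independent of $s$) bounded invertible ``change of unknown'' intertwining the two families up to compact perturbations, so that crossings of $s \mapsto \mathcal A_{c,s}$ and crossings of $s \mapsto \mathcal J_{c,s}$ match with the same signatures.

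The key steps, in order, are: (i) fix $c \in [0,1]$ and write down the reduction operator sending a solution of the second order problem to the pair $(y,u)$; check it is a well-defined isomorphism between the relevant domains, carrying the boundary condition $u(0) = Au(T),\ u'(0) = Au'(T)$ to $z(0) = A_d z(T)$, using Equation \eqref{eq:condition-of-P-after-trivialization} and the block form \eqref{eq:Ad} of $A_d$; (ii) verify that under this reduction the quadratic form $I_s$ attached to $\mathcal A_{c,s}$ (with $Q$ replaced by $cQ$, $R$ by $cR$) corresponds to the form naturally attached to $\mathcal J_{c,s}$, so that nondegeneracy of one at a parameter value is equivalent to nondegeneracy of the other and the local contributions to the spectral flow agree; (iii) invoke the abstract homotopy invariance of the spectral flow together with this parametrized identification to conclude $\spfl(\mathcal A_{c,s}, s\in[0,s_0]) = \spfl(\mathcal J_{c,s}, s\in[0,s_0])$ for that $c$; (iv) observe the argument did not use any special property of $c$, hence the equality holds for all $c \in [0,1]$.

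I expect the main obstacle to be purely bookkeeping rather than conceptual: one must be careful that the substitution $y = Pu' + cQu$ really does land in $H^1_A$ (this is where the invertibility of $P$ from (N1), its boundedness from (N2), and the compatibility conditions \eqref{eq:condition-of-P-after-trivialization} enter) and that the boundary conditions transform correctly into the graph condition $(z(0),z(T)) \in \Gr(A_d) = $ the diagonal-type space built from $A$; and that the correspondence between the bilinear form $I_s$ and the Hamiltonian operator $\mathcal J_{c,s}$ is the one used in the reference. Since all of this is standard and is exactly what is cited for Proposition \ref{thm:definition of spectral index} (following \cite[Proposition 4.12]{BHPT19}), I would keep the proof short: state the reduction, note it is the same one as before now carrying the extra parameter $c$ inertly, and refer to \cite{BHPT19} (and, for the abstract spectral flow properties, to Section \ref{sec:spectral-flow}) for the details.
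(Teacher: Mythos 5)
Your proposal follows essentially the same route as the paper's proof: after a perturbation to achieve regular crossings, one checks that the crossing instants of $s\mapsto\mathcal A_{c,s}$ and $s\mapsto\mathcal J_{c,s}$ coincide (via the Legendre-type substitution $y=Pu'+cQu$, $z=\trasp{(y,u)}$) and that the crossing forms are isomorphic, and then concludes by homotopy invariance and the crossing-form formula \eqref{eq:spectral-flow-crossings}, citing \cite[Proposition 4.12]{BHPT19} for the details. One small remark in your favour: you carry the boundary condition over as $z(0)=A_d z(T)$, which is what is needed to make $\ker\mathcal A_{c,s}$ and $\ker\mathcal J_{c,s}$ correspond, whereas the paper's displayed domain $\widetilde{\mathcal D}$ is written with $\Delta$ in place of $\Gr(A_d)$; your version is the one consistent with Equation \eqref{eq:Hamilton system}.
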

\begin{proof}
 After a standard perturbation argument in order the involved paths have only regular crossings, the proof goes on by proving the following two claims
\begin{enumerate}
\item $t_* \in [0,T]$ is a crossing instant for the path $s \mapsto \mathcal A_{c,s}$ if and only if it is  crossing instant for the path $s \mapsto \mathcal J_{c,s}$
\item the crossing forms at each crossing instant are isomorphic.
\end{enumerate}
Once these has been proved, then the thesis follows by the homotopy invariant property of the spectral flow and by  Equation~\eqref{eq:spectral-flow-crossings}. For further details we refer to   \cite[Proposition 4.12]{BHPT19}.
\end{proof}


\subsection{Sturm-Liouville operators having constant principal symbol}

We start to consider the following new two parameters family of (unbounded) self-adjoint (in $L^2$) Fredholm operators with dense domain $\mathcal D$ 
\begin{equation}\label{eq:two parameter operator path}
\overline{\mathcal{A}}_{c,s}=-\dfrac{d}{dt}\left(P\dfrac{d}{dt}+c Q(t)\right)+c \trasp{Q}(t)\dfrac{d}{dt}+c R(t)+s P, \quad \textrm{ for } \  (c,s) \in [0,1]\times [0,s_0]
\end{equation}
obtained by replacing into Equation~\eqref{eq:two parameter operator path}, the principal symbol $P(t)$ of the operator $\mathcal A_{c,s}$ by $\overline P$. The domain $\mathcal D$ of the operator is, as before,  $\mathcal D\=\Set{u \in W^{2,2}([0,T],\R^n)| u(0)=Au(T), u'(0)= Au' (T)}$. 

For $c=0$ and $s=s_0$, it is easy to observe that $x \in \ker \overline{\mathcal A}_{c,s}$ if and only if it is solution of the following Sturm-Liouville boundary value problem 
\begin{equation}\label{eq:reduced second order system}
\begin{cases}
	-\overline Pu''(t)+s_0\overline Pu(t) =0, &   t \in [0,T] \\
	u(0)=Au(T), \quad u'(0)=Au'(T).
\end{cases}
\end{equation}
Let $\overline \psi_{c,s}(t)$ be the fundamental solution of the Hamiltonian system obtained by replacing the path $t\mapsto P(t)$ by the constant matrix $\overline P$ in the matrix appearing in  Equation~\eqref{eq:bcs}; thus  $\overline \psi_{c,s}$ is the fundamental solution of  
\begin{equation}\label{eq:Hamilton system with sP}
	z'(t)= J \overline B_{c,s}(t) z(t), \qquad  t \in [0,T] 
\end{equation}
where we set 
\[
\overline B_{c,s}(t)=\begin{bmatrix} 
\overline P^{-1} &- c \overline P^{-1} Q(t)\\-c Q(t)\overline P^{-1} & c^2 \trasp{Q}(t) \overline P^{-1}Q(t) -c R(t)-s \overline P
\end{bmatrix}.
\]
Actually in the special case $c=0$ and $s=s_0$, it is possible  to compute the intersection index  
$\iCLM\big(\Delta,\Gr(A_d\overline \psi_{0,s_0}(t)), t\in[0,T]\big)$ 
 with respect to the diagonal of the path  $A_d\overline \psi_{0,s_0}(t))$. This is the content of Lemma  \ref{lem:computation of special index p constant}. A key  step in the proof is based on the reduction to the Sturm-Liouville bvp arising in the analogous problem for closed geodesics on semi-Riemannian manifold. (Cfr. \cite[Proposition 5.6]{HPY19}).
\begin{lem}\label{lem:computation of special index p constant}
Under the above notation, the following equality holds:
\begin{equation}
\iCLM\big(\Delta,\Gr(A_d\overline \psi_{0,s_0}(t)), t\in[0,T]\big)=\dim\ker(A-\Id).
\end{equation}
In particular, if $A=\Id$, we get 
\begin{equation}
\iCLM\big(\Delta,\Gr(\overline \psi_{0,s_0}(t)), t\in[0,T]\big)=n.
\end{equation}
\end{lem}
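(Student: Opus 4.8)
The plan is to make the $c=0$, $s=s_0$ system completely explicit, to locate its single crossing with the diagonal, and to read off the $\iCLM$ index from the crossing form there. First, note that for $c=0$, $s=s_0$ the matrix in \eqref{eq:bcs} (with $P(t)$ replaced by the constant $\overline P$) loses both its off-diagonal blocks and its curvature term, so that $\overline B_{0,s_0}(t)\equiv\begin{bmatrix}\overline P^{-1}&0\\0&-s_0\overline P\end{bmatrix}$ is independent of $t$. Writing $z=(y,u)$, the Hamiltonian system $z'=J\overline B_{0,s_0}z$ reads $y'=s_0\overline P u$, $u'=\overline P^{-1}y$, whence $u''=\overline P^{-1}\big(s_0\overline P u\big)=s_0 u$; setting $w\=\sqrt{s_0}$ its fundamental solution is therefore
\[
\overline\psi_{0,s_0}(t)=\begin{bmatrix}\cosh(wt)\,\Id & w\sinh(wt)\,\overline P\\[2pt]\tfrac{1}{w}\sinh(wt)\,\overline P^{-1} & \cosh(wt)\,\Id\end{bmatrix},\qquad t\in[0,T].
\]
Moreover, the compatibility relation \eqref{eq:condition-of-P-after-trivialization} applied to the constant symbol $\overline P$ forces $\overline P=A\overline P\trasp A$, i.e.\ $A\overline P=\overline P A$, which I shall use repeatedly.

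Next, I would show that for $s_0>0$ the only crossing instant of the path $t\mapsto\Gr\big(A_d\overline\psi_{0,s_0}(t)\big)$ with $\Delta$ in $[0,T]$ is $t=0$. An instant $t$ is a crossing precisely when $1\in\spec\big(A_d\overline\psi_{0,s_0}(t)\big)$. Using $A\overline P=\overline P A$, the two subspaces $V_\pm\=\{(\overline P v,\pm w\,v):v\in\R^n\}$ are invariant under both $A_d$ and $\overline\psi_{0,s_0}(t)$, and on $V_\pm$ the map $\overline\psi_{0,s_0}(t)$ acts as multiplication by $e^{\pm wt}$ composed with an operator commuting with $A$; hence on $V_\pm$ the equation $A_d\overline\psi_{0,s_0}(t)\zeta=\zeta$ becomes $A\zeta=e^{\mp wt}\zeta$, which has no nonzero solution unless $e^{wt}=1$, since $A\in\OO(n)$ has unimodular spectrum. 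Thus $t=0$, where $\overline\psi_{0,s_0}(0)=\Id$ and $\Gr(A_d)\cap\Delta=\ker(A_d-\Id)=\ker(A-\Id)\oplus\ker(A-\Id)$, of dimension $2\dim\ker(A-\Id)$.

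Finally, I would compute the $\iCLM$ crossing form at $t=0$. As $A_d$ is constant, $\tfrac{d}{dt}\big|_{0}\big(A_d\overline\psi_{0,s_0}\big)=A_d J\overline B_{0,s_0}(0)$, and evaluating the crossing form of $t\mapsto\Gr\big(A_d\overline\psi_{0,s_0}(t)\big)$ against $\Delta$ on $\ker(A-\Id)\oplus\ker(A-\Id)$ (using $Av_i=v_i$ to discard the $A$'s) yields, up to the overall sign fixed by the $\iCLM$ normalisation,
\[
\Gamma(v_1,v_2)=s_0\,\langle\overline P v_2,v_2\rangle-\langle\overline P^{-1}v_1,v_1\rangle .
\]
Since $\overline P$ commutes with $A$ it preserves $\ker(A-\Id)$, and $\overline P$, $\overline P^{-1}$, $-\overline P^{-1}$ there have signatures $(p',q')$, $(p',q')$, $(q',p')$ with $p'+q'=\dim\ker(A-\Id)$; hence $\Gamma$ is nondegenerate with $\coiMor(\Gamma)=\iMor(\Gamma)=\dim\ker(A-\Id)$. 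Because $t=0$ is the left endpoint of the interval, its contribution to the $\iCLM$ index is $\coiMor(\Gamma)$, giving $\iCLM\big(\Delta,\Gr(A_d\overline\psi_{0,s_0}(t)),t\in[0,T]\big)=\dim\ker(A-\Id)$, and $n$ when $A=\Id$. As the statement indicates, this last step is precisely the Sturm--Liouville computation performed for closed semi-Riemannian geodesics in \cite[Proposition~5.6]{HPY19}, which one may instead quote verbatim once the reduction above has identified the two boundary value problems.

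The main obstacle is the endpoint bookkeeping for $\iCLM$: one must pin down the convention exactly --- a crossing at the left endpoint contributes the positive index $\coiMor$ of its crossing form, not $\tfrac12\sgn$ and not the full dimension of the intersection --- and check that, with the matching sign convention, the symmetric form $\Gamma$ above produces exactly $\dim\ker(A-\Id)$, so that the naive count $2\dim\ker(A-\Id)$ is halved. Everything else rests on the two elementary observations that $\overline B_{0,s_0}$ is constant and that $A\overline P=\overline P A$.
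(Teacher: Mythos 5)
Your proof is correct, but it takes a genuinely different route from the paper's. You exploit the fact that for $c=0$, $s=s_0$ the coefficient matrix $\overline B_{0,s_0}=\mathrm{diag}(\overline P^{-1},-s_0\overline P)$ is constant and $J\overline B_{0,s_0}$ is hyperbolic with spectrum $\{\pm\sqrt{s_0}\}$, and you compute the fundamental solution, the unique crossing instant ($t=0$, by splitting $\R^{2n}$ into the $\pm\sqrt{s_0}$ eigenspaces of $J\overline B_{0,s_0}$, both $A_d$-invariant because $A\overline P=\overline P A$), and the crossing form there explicitly. The paper instead diagonalizes $\overline P=\traspm{M}GM^{-1}$ in $\OO(n)$, changes variables $x=M^{-1}u$ to transform the Sturm--Liouville boundary value problem into the one studied for semi-Riemannian closed geodesics, quotes \cite[Proposition 5.6]{HPY19} to evaluate the Maslov index of the transformed system, and then verifies that the two indices agree by exhibiting an isomorphism of crossing forms. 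Your approach is more self-contained (it avoids any reliance on the external geodesic result) and gives the answer as the $\coiMor$ of a balanced form directly; the paper's approach has the advantage that the rather delicate crossing-form bookkeeping is outsourced to \cite{HPY19}, and the conjugation $\widetilde M=\mathrm{diag}(\trasp M,M^{-1})$ makes the equality of indices a clean structural argument. Two small remarks on your write-up: your parametrization of the eigenspaces should read $V_\pm=\{(\pm w\,\overline P v,\,v):v\in\R^n\}$ (the one you wrote is only an eigenspace when $s_0=1$), and the nondegeneracy of $\overline P|_{\ker(A-\Id)}$, which you use implicitly when you speak of its signature $(p',q')$, deserves a one-line justification (a symmetric nondegenerate matrix that preserves a subspace also preserves its Euclidean orthogonal complement, hence restricts nondegenerately to the subspace). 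Neither remark affects the final count: since the crossing form is a direct sum of two nondegenerate forms of opposite signature its $\coiMor$ equals $\dim\ker(A-\Id)$ regardless of the overall sign convention.
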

\begin{proof}
Let $k \in \{0, \dots, n\}$ denotes the number of positive eigenvalues of $P$ and we define the matrix  
\begin{equation}\label{eq:G}
G\=\begin{bmatrix}I_k&0\\0&-I_{n-k}\end{bmatrix}.
\end{equation}
 Since $\overline P$ is symmetric, by diagonalizing it in the orthogonal group, we get that  $\overline P=\traspm{M}GM^{-1}$, for some $M\in \OO(n)$. Thus the  system given in Equation~\eqref{eq:reduced second order system} can be re-written as follows
\begin{equation}
\begin{cases}-GM^{-1}u''(t)+s_0 G M^{-1} u(t)=0, \qquad  t \in [0,T] \\
	u(0)=Au(T), \quad u'(0)=Au'(T).
\end{cases}
\end{equation}

Let $x(t)\=M^{-1}u(t)$; thus   we have
\begin{equation}\label{eq:reduce to geodisic paper}
\begin{cases}
-Gx''(t)+s_0 G x(t)=0, &   t \in [0,T] \\
x(0)=M^{-1}AMx(T), \quad  x'(0)=M^{-1}AMx'(T).
\end{cases}
\end{equation}
We set $\widehat{A}\=M^{-1}AM$ and we observe  that $\trasp{\widehat{A}}G\widehat{A}=G$. Let $\widehat{\psi}_{0,s_0}$ be the fundamental solution of the  Hamiltonian system corresponding to the ODE  given in Equation~\eqref{eq:reduce to geodisic paper}. By invoking  \cite[Proposition 5.6]{HPY19}, we can conclude that
\begin{equation}\label{eq:maslov-index-reduce-geodesic-paper}
\iCLM(\Delta, \Gr(\widehat{A}_d\widehat{\psi}_{0,s_0}(t)), t\in[0,T])=\dim\ker(\widehat{A}-\Id),
\end{equation}
where $\widehat{A}_d=\begin{bmatrix}\traspm{\widehat{A}}&0\\0&\widehat{A}\end{bmatrix}$. 
 
The last step for concluding the proof is to show that  \begin{equation}\label{eq:relation between two maslov index for reduced system}
\iCLM(\Delta, \Gr(A_d\overline\psi_{0,s_0}(t)), t\in[0,T])= \iCLM(\Delta,\Gr(\widehat{A}_d\widehat{\psi}_{0,s_0}(t)), t\in[0,T]).
\end{equation}
First of all we start by observing that  $x$ is a solution of  the bvp given in Equation  \eqref{eq:reduced second order system} if and only if $u$  is a solution of the system given in Equation~\eqref{eq:reduce to geodisic paper}. In particular, we get that the crossing instants of the paths $t\mapsto \Gr(A_d \overline \psi_{0,s_0}(t))$  and $t \mapsto \Gr(A_d\widehat{\psi}_{0,s_0}(t))$ with respect to $\Delta$ both coincide. 

Now, respectively speaking, the crossing forms 
 at each crossing corresponding to the two paths,  are the quadratic forms represented by the block diagonal matrices $B_{0,s_0}=\begin{bmatrix} \bar{P}^{-1} &0\\0&-s_0 \bar{P} \end{bmatrix}$ and $\widehat{B}_{0,s_0}=\begin{bmatrix} G &0\\0&-s_0 G \end{bmatrix}$. By setting  $y(t)=\bar{P}u'(t)$ and $v(t)=G x'(t)$, we get  $v(t)=\trasp{M} y(t)$. 

If  $t_0$ is a crossing instant, then we get that 
\[
z=\begin{bmatrix}y\\u\end{bmatrix}\in \ker(A_d\overline \psi_{0,s_0}(t_0)-\Id) \quad \iff\quad  \widehat{z}=\begin{bmatrix}v\\u\end{bmatrix}=\begin{bmatrix}\trasp{M}y\\M^{-1}u\end{bmatrix}\in \ker(\widehat{A}_d\widehat{\psi}_{0,s_0}(t_0)-\Id).
\]
Furthermore, we have 
\begin{equation}\label{eq:relation between two crossing forms}
\begin{aligned}
\langle\widehat{B}_{0,s_0}\widehat{z},\widehat{z} \rangle&=\left\langle \begin{bmatrix} G &0\\0&-s_0G \end{bmatrix}
\begin{bmatrix}\trasp{M}y\\M^{-1}u\end{bmatrix},\begin{bmatrix}\trasp{M}y\\M^{-1}u\end{bmatrix}\right\rangle=\langle MG\trasp{M}y,y\rangle -s_0\langle \traspm{M}GM^{-1}u,u\rangle\\
&= \langle P^{-1}y,y\rangle -s_0\langle Pu,u\rangle=\left\langle\begin{bmatrix} P^{-1} &0\\0&-s_0P \end{bmatrix}\begin{bmatrix}y\\u\end{bmatrix},\begin{bmatrix}y\\u\end{bmatrix}\right\rangle=\langle B_{0,s_0}z,z  \rangle.
\end{aligned}
\end{equation}
The computation performed in Equation~\eqref{eq:relation between two crossing forms}, in particular, shows that the crossing forms are isomorphic, where the isomorphism is provided by the map 
\[
z \mapsto \widetilde M z \quad \textrm{ where }\quad  \widetilde M\= \begin{bmatrix} \trasp{M} & 0\\ 0 & M^{-1}
 \end{bmatrix}.
\]
By invoking Equation~\eqref{eq:iclm-crossings}, we can conclude the proof of the equality given in Equation  \eqref{eq:relation between two maslov index for reduced system}. 

The conclusion readily follows by Equation~\eqref{eq:maslov-index-reduce-geodesic-paper}, Equation~\eqref{eq:relation between two maslov index for reduced system} and Equation~\eqref{eq:trivial} once observed that 
\begin{equation}\label{eq:trivial}
\dim\ker(\widehat{A}-\Id)=\dim\ker(A-\Id). 
\end{equation} 
The second claim readily follows by observing that if $A=\Id$ the kernel dimension of $A-\Id$ is $n$.
\end{proof}


\subsection{Sturm-Liouville operators having non-constant principal symbol}

In the most general case of  $t$-dependent principal symbol $t \mapsto P(t)$  the result is formally the same, but the proof is much more tricky and delicate. There is a deep reason for such a difference; reason closely related to the Kato's Selection Theorem.   

 Proposition \ref{thm:index-formula-for-general-P-1} is formally identical to the second claim in Lemma \ref{lem:computation of special index p constant}, but in the case of non constant principal symbol Sturm-Liouville operators.  
\begin{prop}\label{thm:index-formula-for-general-P-1}
Under the notation above, the following equality holds
\begin{equation}\label{eq:formula-p-generale-1}
\iCLM(\Delta,\Gr(\psi_{0,s_0}(t)), t \in [0,T])=n,
\end{equation}
\end{prop}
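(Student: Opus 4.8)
The plan is to reduce the computation of $\iCLM(\Delta,\Gr(\psi_{0,s_0}(t)),t\in[0,T])$ for the non-constant principal symbol $t\mapsto P(t)$ to the constant-symbol case already settled in the second claim of Lemma \ref{lem:computation of special index p constant}, exploiting the homotopy invariance of the $\iCLM$ index. The natural device is the linear path of principal symbols $P_r(t)\= (1-r)\overline P + r P(t)$, $r\in[0,1]$, where $\overline P$ is some fixed symmetric invertible matrix (e.g.\ $\overline P = P(0)$, or any constant matrix in the same $\GL$-component as the loop $P(\cdot)$ so that the relevant boundary conditions and nondegeneracy are preserved). For each $r$ this gives a Sturm--Liouville operator $\mathcal A_{0,s_0}^{(r)}$ with $c=0$ and principal symbol $P_r$, hence a Hamiltonian path $\psi^{(r)}_{0,s_0}$ through $B^{(r)}_{0,s_0}(t)=\begin{bmatrix}P_r^{-1}(t)&0\\0&-s_0 P_r(t)\end{bmatrix}$, interpolating between $\overline\psi_{0,s_0}$ at $r=0$ and $\psi_{0,s_0}$ at $r=1$.

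The key steps, in order, are: (i) check that $P_r(t)$ stays invertible for all $(r,t)$ — this follows from assumption (N1) together with convexity of the nondegeneracy condition only in the definite case, so in general one must instead choose the homotopy within a single $\GL$-component, or route through the first-order operators $\mathcal J^{(r)}_{0,s_0}$ where only symmetry and boundedness are needed; (ii) verify that $r\mapsto \psi^{(r)}_{0,s_0}$ is a continuous (indeed smooth) path in $\Sp(2n,\R)$, uniformly in $t$, so that $r\mapsto \big(\Delta,\Gr(\psi^{(r)}_{0,s_0}(\cdot))\big)$ is an admissible homotopy of Lagrangian paths; (iii) control the endpoint behaviour, i.e.\ show $\Gr(\psi^{(r)}_{0,s_0}(T))\cap\Delta$ has constant dimension along $r$ (equivalently $\ker\mathcal A^{(r)}_{0,s_0}$ is constant), so that the $\iCLM$ index of the homotopy has no jump coming from the endpoint $t=T$; and (iv) conclude by homotopy invariance of $\iCLM$ that
\[
\iCLM(\Delta,\Gr(\psi_{0,s_0}(t)),t\in[0,T]) = \iCLM(\Delta,\Gr(\overline\psi_{0,s_0}(t)),t\in[0,T]) = n,
\]
the last equality being exactly the second claim of Lemma \ref{lem:computation of special index p constant} in the case $A=\Id$ (recalling that here $A=\Id$ since $\psi_{0,s_0}$, not $A_d\psi_{0,s_0}$, appears).

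The main obstacle I expect is step (iii) — controlling the kernels $\ker\mathcal A^{(r)}_{0,s_0}$ along the homotopy. Since the $\iCLM$ index is not fully homotopy invariant with free endpoints (it can jump when the endpoint Lagrangian crosses $\Delta$), one needs either to show the kernel dimension is genuinely constant in $r$, or to use a stratum-homotopy argument keeping the final endpoint in a fixed intersection stratum, or to pick $s_0$ large enough (as in Lemma \ref{thm:non-degenerate-s_0-forme} / Corollary \ref{thm:non-degenerate-s_0-operatori}) that $\mathcal A^{(r)}_{0,s_0}$ is nondegenerate for \emph{every} $r\in[0,1]$ simultaneously — a uniform version of the earlier nondegeneracy estimates, which is where the ``Kato's Selection Theorem'' subtlety flagged in the text enters, because the eigenvalue branches of the $t$-dependent symbol cannot be diagonalized smoothly and one must argue with the quadratic form $I^{(r)}_{s_0}$ directly, as in the proof of Lemma \ref{thm:non-degenerate-s_0-forme}, obtaining the coercivity bound uniformly in $r$. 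Once that uniform nondegeneracy is in hand, both endpoints of every path $t\mapsto\psi^{(r)}_{0,s_0}(t)$ avoid $\Delta$-crossings and the homotopy invariance of $\iCLM$ closes the argument.
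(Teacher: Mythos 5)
Your outline identifies the right target (reduce to the constant-symbol case of Lemma~\ref{lem:computation of special index p constant}) and the right tools (homotopy invariance of $\iCLM$, uniform-in-homotopy-parameter nondegeneracy at $t=T$ à la Lemma~\ref{thm:non-degenerate-s_0-forme}), but the central step is missing: you never actually produce an admissible homotopy from $t\mapsto P(t)$ to a constant symbol. The straight-line interpolation $P_r(t)=(1-r)\overline P+rP(t)$ that you propose does, as you yourself note, fail to stay invertible once $P(t)$ is indefinite, since the set of symmetric matrices of fixed signature is not convex. Falling back on ``choose the homotopy within a single $\GL$-component'' or ``route through the first-order operators $\mathcal J^{(r)}_{0,s_0}$'' does not repair this: the first phrase is precisely the unsolved problem restated, and the second does not help because $B^{(r)}_{0,s_0}$ still contains $P_r^{-1}$, so invertibility of $P_r(t)$ is required in either formulation.

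The paper's proof resolves exactly this obstruction, and in a way that is not a variant of a straight-line interpolation. It proceeds in two stages. First it interpolates $P_\theta(t)=\theta P(t)+(1-\theta)\big(2P^+(t)-\Id\big)$, where $P^+(t)$ is the positive eigenprojection of $P(t)$. Because $P(t)$ and $2P^+(t)-\Id$ share the same eigenspaces, in a pointwise eigenbasis each eigenvalue of $P_\theta(t)$ equals $\theta\lambda+(1-\theta)\sgn\lambda$ and so never vanishes; this is what makes the homotopy stay in a single $\GL$-component, and it uses the spectral structure of $P(t)$ in an essential way that a generic $\overline P$ cannot provide. This lands on a $t$-dependent symbol with constant spectrum $\{\pm1\}$, which is still not constant. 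The second stage then removes the remaining $t$-dependence not by a linear homotopy of symbols at all, but by constructing a continuous path of orthogonal conjugations $M(t)$ with $P(t)=\trasp{M(t)}P(0)M(t)$, $M(0)=\Id$ (possible here precisely because the spectrum $\{\pm1\}$ never degenerates, which is where the Kato Selection Theorem caveat in the paper's footnote is circumvented), and then doing a homotopy in the rescaled conjugating time $\alpha\mapsto M(\alpha t)$ together with a compensating change of the boundary matrix $A_\alpha$. This second step also requires a separate Maslov-index computation for the induced family of boundary Lagrangians $\Gr(\trasp{A}_{\alpha d})$, none of which appears in your outline. In short, the gap is not a technical detail one could patch by ``uniform coercivity'': the two nontrivial homotopies, and the reason each is admissible, are the entire content of the proof and are absent from the proposal.
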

\begin{proof}
Before providing the proof of this result we briefly sketch the main steps.  By using the fixed-endpoints homotopy invariance of the $\iCLM$-index, the main idea is to construct a homotopy between the path $t \mapsto P(t)$ and a  non-degenerate constant and symmetric matrix  and then to reduce this computation to Lemma \ref{lem:computation of special index p constant}.  This will be achieved as follows: 
\begin{enumerate}
\item by reducing (through a homotopy argument) $t \mapsto P(t)$ to a non-degenerate $t$-dependent symmetric matrix  having constant spectrum given by $\{\pm 1\}$;
\item by reducing (through a homotopy argument) the non-degenerate $t$-dependent symmetric matrix  having constant spectrum given by $\{\pm 1\}$ to a constant non-degenerate symmetric matrix $\overline P$.
\end{enumerate}
\underline{Step 1}. Reduction of the problem to the case in which the principal symbol is pointwise given by $P_0(t)$.

For every eigenvalue $\lambda(t)\in \sigma(P(t))$, we denote by $E_{\lambda(t)}$ be the eigenspace corresponding to it. We define the  positive and negative eigenspaces of $P(t)$, as follows 
\begin{equation}
E_+(t)=\bigoplus_{0<\lambda(t)\in \sigma (P(t))}E_{\lambda(t)} \quad \textrm{ and }\quad  \qquad E_-(t)=\bigoplus_{0>\lambda(t)\in \sigma (P(t))}E_{\lambda(t)}.
\end{equation}
and let   $P^+(t):\mathbb{C}^{2n}\rightarrow E_+(t)$ be the positive eigenprojection. 

For $ \theta\in[0,1]$, we consider the homotopy $P_\theta(t)=\theta  P(t)+(1-\theta)(P^+(t)-(\Id-P^+(t)))$. Clearly   $P_1(t)=P(t)$ and $P_0(t)=P^+(t)-(\Id-P^+(t))= 2 P^+(t)-\Id$ is the conjugation operator. Being $P^+(t)$ a orthogonal projector (actually the positive eigenprojector) operator, it follows that  the matrix $P_0(t)$ only has eigenvalues $\pm 1$.

We now introduce the $\theta$-dependent one parameter family of Sturm-Liouville problems  
\begin{equation}\label{eq:c=0 for general P and A=I}
\begin{cases}
-\dfrac{d}{dt}\big(P_\theta(t)x'(t)\big)+s_0P_\theta(t)x(t)=0, & t \in [0,T]\\ x(0)=x(T), \quad  x'(0)=x'(T)
\end{cases}
\end{equation}
and the corresponding $\theta$-dependent one parameter family of Hamiltonian systems defined by
\begin{equation}\label{eq:Hamiltonian system with sP and homotopy for A=I}
\begin{cases}
	z'(t)=JB_{\theta,s_0}(t)z(t),& t \in [0,T]\\
	z(0)=z(T)
\end{cases}
\end{equation}
where $B_{\theta,s_0}(t)\=\begin{bmatrix} P_\theta^{-1}(t) &0\\0&s_0P_\theta(t) \end{bmatrix}$. We denote by $\psi_{\theta,s_0}$  the fundamental solution of Hamiltonian system defined by Equation~\eqref{eq:Hamiltonian system with sP and homotopy for A=I}.  By the homotopy property of the $\iCLM$-index, we have
\begin{multline}\label{eq:boxhomotopy}
\iCLM\big(\Delta, \Gr(\psi_{0,s_0}(t)),t\in[0,T]\big)+\iCLM\big(\Delta, \Gr(\psi_{\theta,s_0}(T)),\theta\in[0,1]\big)\\
=\iCLM\big(\Delta, \Gr(\psi_{\theta,s_0}(0)),\theta\in[0,1]\big)+\iCLM\big(\Delta, \Gr(\psi_{1,s_0}(t)),t\in[0,T]\big).
\end{multline}
Since $\psi_{\theta,s_0}(0)\equiv \Id$ for every $\theta \in [0,1]$, then $\iCLM(\Delta, \Gr(\psi_{\theta,s_0}(0)),\theta\in[0,1])=0$. 

By arguing precisely as in the proof of 
Corollary \ref{thm:non-degenerate-s_0-operatori}, it follows  that the one parameter family of (self-adjoint in $L^2$ unbounded Fredholm operators) defined by 
\[
-\dfrac{d}{dt}\left(P_\theta(t)\dfrac{d}{dt}\right)+s_0 P_\theta(T): H^2_p \subset L^2([0,T], \R^n)\to L^2([0,T], \R^n), \quad \textrm{ for } \  \theta \in [0,1]
\]
where $H^2_p\=\Set{u \in W^{2,2}([0,T],\R^n)| u(0)=u(T), u'(0)= u' (T)}$ is non-degenerate for every $\theta\in[0,1]$  and 
for large enough $s_0$.  

By using \cite[Theorem 2.5]{HS09} we get that  $\iCLM(\Delta, \Gr(\psi_{\theta,s_0}(T)),\theta\in[0,1])=0$ and then  by Equation~\eqref{eq:boxhomotopy}, we immediately get that 
 \begin{equation}\label{eq:sono-uguali}
\iCLM(\Delta, \Gr(\psi_{0,s_0}(t)),t\in[0,T])=\iCLM(\Delta, \Gr(\psi_{1,s_0}(t)),t\in[0,T]).
\end{equation}
{\underline Step 2}. Through the previous homotopy we reduced the computation to the case of a Sturm-Liouville operator having principal symbol given by a conjugation operator (in particular constant spectrum given by $\{-1,1\}$. Thus,  without leading in generalities,  we can assume that $P(t)$ is a constant symmetric matrix  with spectrum given by $\{-1,1\}$ and, as before, let $E_{\pm 1}(t)$ be the eigenspaces corresponding to the eigenvalues $\pm1$ of $P(t)$, respectively. 

Then, for each $t \in [0,T]$, there exists a 
continuous path of  orthogonal matrices 
\[
t \mapsto M(t):E_{\pm1}(t)\rightarrow E_{\pm1}(0)
\]
such that pointwise we have $P(t)=\trasp{M}(t)P(0)M(t)$ and $M(0)=\Id$. In fact, by the compactness of $[0,T]$, we only need to construct such path locally.
 Let  $\pi_{\pm}(t)$ be the orthogonal projections on $E_{\pm1}(t)$. 
 There is $\delta>0$ such that $\rk \big(\pi_+(0)\pi_+(t)\big)=k$ for $t\in (-\delta, \delta)$ where $\rk$ denotes the rank.
 Using the polar decomposition, we get that  $\pi_+(0)\pi_+(t)=S_tG_t$  such that  $S_t\in \SO(n)$ and $G_t$ is a positive semidefinite matrix.
 Then we have 
 \[E_{+1}(t)^\perp= \ker\pi_+(t)= \ker \pi_+(0)\pi_+(t)=\ker S_tG_t=\ker G_t\] and
 \[ 
 E_{+1}(0)= \im \pi_+(0)= \im \pi_+(0)\pi_+(t)=\im S_tG_t=\im S_t .
 \]
 Being $G_t$ self-adjoint, we get $\im G_t =(\ker G_t)^\perp =E_{+1}(t)$.
It follows that $S_t|_{E_{+1}(t)}=E_{+1}(0)$.

  We recall that $P(0)=P(T)$ (since $L$ is $T$-periodic) and hence $P(T)=\trasp{M}(T)P(0)M(T)=P(0)$.\footnote{
We observe that, if we have a continuous (even smooth) path of symmetric matrices $t \mapsto A_t$ we cannot  claim that  there exists a continuous family $t \mapsto Q_t$ of invertible matrices such that  $A_t= Q^{-1}_t\Delta_t Q_t$ where  $t \mapsto \Delta_t$ are diagonal. (Cfr. for further details, Kato selection's theorem \cite[Thm. II.5.4 and  Thm. II.6.8]{Kat80}). The regularity on the eigenprojections get lost when different eigenvalues path, for a certain specific value of the parameter, both coincide which cannot be the case for the path $t \mapsto P_0(t)$.}

For $\alpha \in [0,1]$, we set $A_\alpha=M(T)^{-1}M(\alpha T)$ and $P_\alpha(t)=\trasp{M}(\alpha t)P(0)M(\alpha t)$.  Thus we have \begin{equation}
\trasp{A}_\alpha P(0)A_\alpha=\trasp{M(\alpha T)}\traspm{M(T)}P(0)M(T)^{-1}M(\alpha T)=\trasp{M(\alpha T)}P(0)M(\alpha T)=:P_\alpha (T)
\end{equation}
and $A_1=\Id$. 
We now consider the following $\alpha$-dependent one parameter family of Sturm-Liouville problems
\begin{equation}\label{eq:c=0 for general P and A=I-NUOVO}
\begin{cases}
-\dfrac{d}{dt}\big(P(t)x'(t)\big)+s_0P(t)x(t)=0, & t \in [0,T]\\ x(0)=A_\alpha x(T), \quad x'(0)=A_\alpha x'(T)
\end{cases}
\end{equation}
and the corresponding one parameter family of Hamiltonian systems
\begin{equation}\label{eq:Hamiltonian system with sP and homotopy for A=I-NUOVO}
\begin{cases}
	z'(t)=JB_{s_0}(t)z(t),& t \in [0,T]\\
	z(0)=A_{\alpha d} z(T)
\end{cases}
\end{equation}
where, analogous to what denoted above, we set 
$A_{\alpha d}\=\begin{bmatrix}A_{\alpha}&0\\0&A_{\alpha}\end{bmatrix}$. 
By using the homotopy property of the Maslov index, once again, we get 
\begin{multline}\label{eq:general P A=I, eq 1}
\iCLM(\Gr(\trasp{A}_{0 d}), \Gr(\psi_{0,s_0}(t)),t\in[0,T])+\iCLM(\Gr(\trasp{A}_{\alpha d}), \Gr(\psi_{\alpha,s_0}(T)),\alpha\in[0,1])\\
=\iCLM(\Gr(\trasp{A}_{\alpha d}), \Gr(\psi_{\alpha,s_0}(0)),\alpha\in[0,1])+\iCLM(\Gr(\trasp{A}_{1 d}), \Gr(\psi_{1,s_0}(t)),t\in[0,T])
\end{multline}
and if $s_0$ is chosen larger enough,  we get that 
\begin{equation}\label{eq:general P A=I, eq 2}
\iCLM(\Gr(\trasp{A}_{\alpha d}), \Gr(\psi_{\alpha,s_0}(T)),\alpha\in[0,1])=0.
\end{equation}
 Since for every $\alpha \in [0,1]$ it holds that  $\psi_{\alpha,s_0}(0)\equiv \Id$, then   we have
 \begin{equation}
 \iCLM(\Gr(\trasp{A}_{\alpha d}), \Gr(\psi_{\alpha,s_0}(0)),\alpha\in[0,1])=\iCLM(\Gr(\trasp{A}_{\alpha d}), \Delta,\alpha\in[0,1]).
 \end{equation}
 Consider a two-family $\gamma_{\alpha ,t}=\begin{pmatrix}
 t \trasp{A}_{\alpha}&0\\0& 1/t \trasp{A}_{\alpha} \\ 
 \end{pmatrix}
 ,t\in [1,2]$.
 By using homotopy  property of Maslov index, we have
 \begin{equation}
 \begin{aligned}
 \iCLM(\Gr(\trasp{A}_{\alpha d}), \Delta,\alpha\in[0,1])=& \iCLM(\Gr(\gamma_{0,t}),\Delta,t\in [1,2])+\iCLM(\Gr(\gamma_{\alpha, 2}),\Delta,\alpha \in [0,1])\\
 &-\iCLM(\Gr(\gamma_{1,t}),\Delta,t\in [1,2])
 \end{aligned}
 \end{equation}
 Note that $\Gr(\gamma_{\alpha, 2})\cap \Delta =\{0\},\alpha\in [0,1]$.
 So we have $\iCLM(\Gr(\gamma_{\alpha, 2}),\Delta,\alpha \in [0,1])=0$.
 
We can use crossing form to calculate $\iCLM(\Gr(\gamma_{0,t}),\Delta,t\in [1,2])$.
Note that 
\[Q=-J(\frac{d}{dt}\gamma_{0,t})\gamma_{0,t}^{-1}\Big|_{t=1}=\begin{pmatrix}
0&-I_n\\-I_n&0
\end{pmatrix}.\]
Note that for any nonempty subspace $V:=\Graph(\gamma_{0t}) \cap \Delta$ of $\Delta$, $Q|_V$ is non-degenerate and $n_+(Q|_V)=n_-(Q|_V)$. 
Since  $\Gr(\gamma_{\alpha,t})\cap \Delta =\{0\}$ for each $1<t\leq 2,\alpha\in [0,1]$, by Equation \eqref{eq:iclm-crossings} we have \begin{equation}
\iCLM(\Gr(\gamma_{0,t}),\Delta,t\in [1,2])=n_+(Q|_{\Gr(\gamma_{0,1})\cap \Delta})=1/2 \dim(\Gr(\gamma_{0,1})\cap \Delta)=\dim \ker (\trasp{A_0}-I).
\end{equation}
Similarly we have $\iCLM(\Gr(\gamma_{1,t}),\Delta,t\in [1,2])=n$ , since $A_1=I_n$.
Then we can conclude that

\begin{equation}\label{eq:general P A=I, eq 3}
\iCLM(\Gr(\trasp{A}_{\alpha d}), \Gr(\psi_{\alpha,s_0}(0)),\alpha\in[0,1])=\dim\ker(\trasp{A}_0-\Id)-n.
\end{equation}
Since for $\alpha=0$, the matrix $P_0(t)=P(0)$ is constant, by applying Lemma \ref{lem:computation of special index p constant}, we get 
\begin{multline}\label{eq:general P A=I, eq 4}
\iCLM(\Gr(\trasp{A}_{0 d}), \Gr(\psi_{0,s_0}(t)),t\in[0,T])=\dim\ker(\trasp{A}_0-\Id)\\ \iCLM(\Gr(\trasp{A}_{1 d}), \Gr(\psi_{1,s_0}(t)),t\in[0,T])= \dim\ker(\trasp{A}_0-\Id).
\end{multline}
By using Equation~\eqref{eq:general P A=I, eq 1}, \eqref{eq:general P A=I, eq 2}, \eqref{eq:general P A=I, eq 3} and finally  Equation~\eqref{eq:general P A=I, eq 4} and after recalling that $A_1=\Id$,  we get 
\begin{equation}\label{eq:general P A=I, eq 5}
\iCLM(\Delta, \Gr(A_{1d}\psi_{1,s_0}(t)),t\in[0,T])=\iCLM(\Gr(\trasp{A}_{1 d}), \Gr(\psi_{1,s_0}(t)),t\in[0,T])=n.
\end{equation}
This concludes the proof.
\end{proof}
The following step is to generalize to the non-constant principal symbol Sturm-Liouville operators, the first statement given  in Lemma \ref{lem:computation of special index p constant} involving not periodic boundary conditions.
\begin{rem}
 As the previous lemmas appearing before, also Proposition \ref{thm:index-formula-for-general-P-2} is a homotopically oriented proof. In this  case involving general boundary conditions the construction of the admissible homotopy  has to take care both of the principal symbol as well as of the boundary conditions since along the through homotopy the first relation given in Equation \eqref{eq:condition-of-P-after-trivialization} has to be satisfied. 
\end{rem}
\begin{prop}\label{thm:index-formula-for-general-P-2}
Under the notation above, the following equality holds
\begin{equation}\label{eq:formula-p-generale}
\iCLM(\Gr(\trasp{A}_d),\Gr(\psi_{0,s_0}(t)), t \in [0,T])=\dim\ker(A-\Id).
\end{equation}
where $\psi_{c,s}$ denotes the fundamental solution of the Hamiltonian system given in Equation~\eqref{eq:hs-general}.
\end{prop}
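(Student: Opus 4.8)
The plan is to imitate the two-step homotopy scheme of the proof of Proposition~\ref{thm:index-formula-for-general-P-1}, now carrying the boundary matrix $A$ along the deformation. As flagged in the remark above, $A$ cannot be kept fixed: straightening $t\mapsto P(t)$ to a constant matrix forces a simultaneous deformation of $A$ so that the compatibility relation \eqref{eq:condition-of-P-after-trivialization} --- which guarantees the self-adjointness of the Sturm--Liouville operators and underlies the non-degeneracy estimate behind Corollary~\ref{thm:non-degenerate-s_0-operatori} --- stays in force throughout. The three tools are: the fixed-endpoints homotopy invariance of the $\iCLM$-index, the crossing-form formula \eqref{eq:iclm-crossings}, and \cite[Theorem~2.5]{HS09}, which pins the $\iCLM$-contribution of the $t=T$ edge of a homotopy to the spectral flow of the associated self-adjoint family, hence to $0$ when $s_0$ is large.

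\emph{Step 1 (deform the principal symbol, $A$ fixed).} Since (N1) gives $0\notin\sigma\big(P(t)\big)$, the orthogonal eigenprojection $P^+(t)$ onto the positive spectral subspace of $P(t)$ is smooth, and its contour-integral representation together with the orthogonality of $A$ yields $P^+(0)=AP^+(T)\trasp{A}$ and $(P^+)'(0)=A(P^+)'(T)\trasp{A}$. Hence for $\theta\in[0,1]$ the family $P_\theta(t)\=\theta\,P(t)+(1-\theta)\big(2P^+(t)-\Id\big)$ of non-degenerate symmetric matrices still obeys \eqref{eq:condition-of-P-after-trivialization} with the same $A$, so the corresponding Hamiltonian systems \eqref{eq:hs-general} (with $c=0$) form an admissible homotopy. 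The box identity on $[0,1]_\theta\times[0,T]_t$ with fixed reference $\Gr(\trasp{A}_d)$ --- the $t=0$ edge being the constant path $\Delta$ and the $t=T$ edge having zero contribution for $s_0$ large --- shows that $\iCLM\big(\Gr(\trasp{A}_d),\Gr(\psi_{0,s_0}(t)),t\in[0,T]\big)$ is unchanged when $P(t)$ is replaced by the reflection $2P^+(t)-\Id$. Thus we may assume $P(t)=2P^+(t)-\Id$, a symmetric matrix of constant spectrum $\{\pm1\}$.

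\emph{Step 2 (straighten the reflection, deforming $A$).} As in Proposition~\ref{thm:index-formula-for-general-P-1}, build (locally, via the polar decomposition of $P^+(0)P^+(t)$) a continuous path $M(t)\in\OO(n)$ with $M(0)=\Id$ and $P(t)=\trasp{M}(t)\,\overline P\,M(t)$, where $\overline P\=P(0)$; then \eqref{eq:condition-of-P-after-trivialization} at $t=0,T$ forces $N\=M(T)\trasp{A}$ to commute with $\overline P$. For $\alpha\in[0,1]$ set $P_\alpha(t)\=\trasp{M}(\alpha t)\,\overline P\,M(\alpha t)$ and $A_\alpha\=A\,\trasp{M}(T)\,M(\alpha T)$; then $P_\alpha(0)=\overline P=A_\alpha P_\alpha(T)\trasp{A_\alpha}$, $P_1\equiv P$, $A_1=A$, and at $\alpha=0$ the symbol $P_0\equiv\overline P$ is constant. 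The box identity on $[0,1]_\alpha\times[0,T]_t$ with moving reference $\Gr(\trasp{A_{\alpha d}})$ then reads: the $\alpha=0$ edge (constant symbol $\overline P$), evaluated by Lemma~\ref{lem:computation of special index p constant}, equals $\dim\ker(A_0-\Id)$; the $t=T$ edge vanishes for $s_0$ large; the $\alpha=1$ edge is the quantity $\iCLM\big(\Gr(\trasp{A}_d),\Gr(\psi_{0,s_0}(t)),t\in[0,T]\big)$ we want; and the $t=0$ edge, where $\psi_{\alpha,s_0}(0)\equiv\Id$, equals $\iCLM(\Gr(\trasp{A_{\alpha d}}),\Delta,\alpha\in[0,1])$. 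This last index I would compute exactly as in Proposition~\ref{thm:index-formula-for-general-P-1} by splicing in the auxiliary family $\gamma_{\alpha,\tau}\=\diag\big(\tau\,\trasp{A_\alpha},\tfrac{1}{\tau}\,\trasp{A_\alpha}\big)$, $\tau\in[1,2]$: since $\Gr(\gamma_{\alpha,2})\cap\Delta=\{0\}$ and the only $\Delta$-crossing of $\tau\mapsto\Gr(\gamma_{\alpha,\tau})$ is at $\tau=1$, where the crossing form $-J\big(\tfrac{d}{d\tau}\gamma_{\alpha,\tau}\big)\gamma_{\alpha,\tau}^{-1}\big|_{\tau=1}$ has vanishing signature, one gets $\iCLM(\Gr(\trasp{A_{\alpha d}}),\Delta,\alpha)=\dim\ker(\trasp{A_0}-\Id)-\dim\ker(A-\Id)$. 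Plugging into the box identity and using $\dim\ker(A_0-\Id)=\dim\ker(\trasp{A_0}-\Id)$, the $A_0$-terms cancel and
\begin{equation}
\iCLM\big(\Gr(\trasp{A}_d),\Gr(\psi_{0,s_0}(t)),t\in[0,T]\big)=\dim\ker(A-\Id),
\end{equation}
which is the assertion.

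The hardest part is Step~2. First, one must certify that the pair $(P_\alpha,A_\alpha)$ is an \emph{admissible} homotopy: that \eqref{eq:condition-of-P-after-trivialization} holds for every $\alpha$ --- this is exactly what selects the formula $A_\alpha=A\,\trasp{M}(T)\,M(\alpha T)$ --- and that the non-degeneracy estimate of Corollary~\ref{thm:non-degenerate-s_0-operatori} can be made uniform in $\alpha$ for $s_0$ large, so that the $t=T$ edge really does drop out. Second, the edge-by-edge bookkeeping on the two nested homotopy rectangles must be organized so that every spurious $\dim\ker(\trasp{A_0}-\Id)$ term cancels, leaving precisely $\dim\ker(A-\Id)$. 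A minor nuisance --- the one recorded in the footnote to Proposition~\ref{thm:index-formula-for-general-P-1} --- is that $M(t)$ need not be globally smooth at eigenvalue crossings of $2P^+(t)-\Id$; but continuity is all the $\iCLM$-index requires, and the local polar-decomposition construction bypasses the issue.
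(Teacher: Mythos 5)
Your proof is correct but follows a genuinely different homotopy scheme from the paper's. The paper performs a \emph{single} deformation that straightens the boundary matrix $A$ to the identity: it picks a path $C(t)\in\OO(n)$ with $C(0)=\Id$, $C(T)=A^{-1}$, sets $P_\epsilon(t)=\trasp{C(\epsilon t)}P(t)C(\epsilon t)$ and $A_\epsilon=AC(\epsilon T)$ (so the compatibility \eqref{eq:condition-of-P-after-trivialization} is preserved), observes that the $\epsilon=1$ endpoint has $T$-periodic symbol and periodic boundary conditions, and simply invokes Proposition~\ref{thm:index-formula-for-general-P-1} to supply the value $n$ there. The $t=0$ edge is then dispatched by \cite[Proposition~4.34]{Wu16} rather than by an auxiliary $\gamma$-family. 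By contrast, you redo both internal steps of Proposition~\ref{thm:index-formula-for-general-P-1} carrying $A$ along: first flatten $P$ to the reflection $2P^+-\Id$ with $A$ fixed (the observation that the eigenprojections inherit the relation $P^+(0)=AP^+(T)\trasp{A}$ is exactly what makes this legal), then conjugate the reflection to the constant $\overline P$ while transporting $A$ to $A_0=A\trasp{M}(T)$, and feed the result directly into Lemma~\ref{lem:computation of special index p constant}; the $A_0$-contributions cancel. Both routes work and rest on the same three tools (box homotopy, crossing forms, and the $s_0$-nondegeneracy that kills the $t=T$ edge). The paper's reduction is shorter because it reuses Proposition~\ref{thm:index-formula-for-general-P-1} wholesale; yours is self-contained but effectively re-proves that proposition inside the more general boundary condition. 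Your assessment of where the real work lies --- certifying that $(P_\alpha,A_\alpha)$ stays admissible under \eqref{eq:condition-of-P-after-trivialization} and that the nondegeneracy bound is uniform in the homotopy parameter --- coincides with the points the paper itself handles only by referencing earlier arguments.
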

\begin{proof}

For, we start to consider a path of  orthogonal matrices  $t \mapsto C(t)$   such that $C(0)=\Id$ and $  C(T)=A^{-1}$ and we define $P_\epsilon(t)=\trasp{C(\epsilon t)}P(t)C(\epsilon t), \epsilon\in[0,1]$. Clearly
 $P_0(t)=P(t)$; moreover  
 \[
 P(T)=\trasp{A}P(0)A,\qquad   P_1(T)=\traspm{A}P(T)A^{-1}=P(0), \qquad \textrm{ and  } \quad P_1(0)=P(0),
 \]
  then we have $P_1(0)=P_1(T)$. By setting $A_\epsilon=AC(\epsilon T)$, one can easily check that  $A_0=A, A_1=\Id$ and
 \begin{equation}
\trasp{A}_\epsilon P_\epsilon(0)A_\epsilon=\trasp{C(\epsilon T)}\trasp{A}\trasp{C(0)}P(0)C(0)AC(\epsilon T)=\trasp{C( \epsilon T)}P(T)C(\epsilon T)=P_\epsilon(T).
 \end{equation}
Let us now consider the system 
\begin{equation}\label{eq:c=0 for general P and A=I-epsilon}
\begin{cases}
-\dfrac{d}{dt}\big(P_\epsilon(t)x'(t)\big)+s_0P_\epsilon(t)x(t)=0, &t \in [0,T]\\x(0)=A_\epsilon x(T), \qquad  x'(0)=A_\epsilon x'(T)
\end{cases}
\end{equation}
and the corresponding $\epsilon$-dependent one parameter family of Hamiltonian systems defined by
\begin{equation}\label{eq:crazy}
\begin{cases}
	z'(t)=JB_{\epsilon,s_0}(t)z(t),& t \in [0,T]\\
	z(0)=A_{\epsilon d}z(T)
\end{cases}
\end{equation}
where $B_{\epsilon,s_0}(t)\=\begin{bmatrix} P_\epsilon^{-1}(t) &0\\0&s_0P_\epsilon(t) \end{bmatrix}$. We denote by $\psi_{\epsilon,s_0}$  the fundamental solution of Hamiltonian system defined by Equation~\eqref{eq:crazy}.  By the homotopy property of the $\iCLM$ index we have
\begin{multline}\label{eq:general P and A, eq 1}
\iCLM(\Gr(\trasp{A}_{0 d}), \Gr(\psi_{0,s_0}(t)),t\in[0,T])+\iCLM(\Gr(\trasp{A}_{\epsilon d}), \Gr(\psi_{\epsilon,s_0}(T)),\epsilon \in[0,1])\\
=\iCLM(\Gr(\trasp{A}_{\epsilon d}), \Gr(\psi_{\epsilon,s_0}(0)),\epsilon\in[0,1])+\iCLM(\Gr(\trasp{A}_{1 d}), \Gr(\psi_{1,s_0}(t)),t\in[0,T]).
\end{multline}
By the  discussion similar to the one given in the proof of Proposition \ref{thm:index-formula-for-general-P-1}, we get that $\iCLM$-index
\begin{equation}\label{eq:general P and A, eq 2}
\iCLM(\Gr(\trasp{A}_{\epsilon d}), \Gr(\psi_{\epsilon,s_0}(T)),\epsilon\in[0,1])=0.
\end{equation}
 Since $\psi_{\epsilon,s_0}(0)\equiv \Id$, then by \cite[Proposition 4.34]{Wu16}, we have
\begin{equation}\label{eq:general P and A, eq 3}
\begin{aligned}
\iCLM(\Gr(\trasp{A}_{\epsilon d}), \Gr(\psi_{\epsilon,s_0}(0)),\epsilon\in[0,1])&=\iCLM(\Gr(\trasp{A}_{\epsilon d}), \Delta,\epsilon\in[0,1])\\
&=\dfrac{1}{2}(\dim(\Gr(\trasp{A}_{0 d})\cap \Delta)-\dim(\Gr(\trasp{A}_{1 d})\cap \Delta))\\
&=\dfrac{1}{2}(2\dim\ker(\trasp{A}-\Id)-2n)=\dim\ker(\trasp{A}-\Id)-n.
\end{aligned}
\end{equation}
Since $P_1(0)=P_1(T)$ and $A_1=\Id$, then by Proposition \ref{thm:index-formula-for-general-P-1}  we get
\begin{equation}\label{eq:general P and A, eq 4}
\iCLM(\Gr(\trasp{A}_{1 d}), \Gr(\psi_{0,s_0}(t)),t\in[0,T])=n.
\end{equation}
Now by equations \eqref{eq:general P and A, eq 1}, \eqref{eq:general P and A, eq 2}, \eqref{eq:general P and A, eq 3} and \eqref{eq:general P and A, eq 4}, we have
\begin{multline}\label{eq:general P and A, eq 5}
\iCLM(\Gr(\trasp{A}_d), \Gr(\psi_{1,s_0}(t)),t\in[0,T])=\iCLM(\Gr(\trasp{A}_{0 d}), \Gr(\psi_{1,s_0}(t)),t\in[0,T])\\=\dim\ker(\trasp{A}-\Id).
\end{multline}
This concludes the proof,  since $\dim\ker(\trasp{A}-\Id)=\dim\ker(A-\Id)$.
\end{proof}


\subsection{Proof of Theorem \ref{thm:instability theorem for non-autonomous case}}

We prove only the (contrapositive of) the first  statement in   Theorem \ref{thm:instability theorem for non-autonomous case}, being the second completely analogous.  Thus, we aim to prove that  
\[
\textrm{ if } x  \textrm{ is orientation preserving and linearly stable } \quad \Rightarrow\quad  \ispec(x) +n \textrm{ is even. }
\]
First of all, it is well-known by \cite[Theorem 2.5]{HS09}, that 
 \begin{multline}\label{eq:equation 1 for prove p general}
\ispec(x)=-\iCLM(\Delta,\Gr(A_d\psi_{1,s}(T)), s\in[0,s_0])\\ =-\iCLM(\Gr(\trasp{A}_d), \Gr(\psi_{1,s}(T)), s\in[0,s_0])
\end{multline}
and by  the path additivity and homotopy properties of the $\iCLM$-index also that 
  \begin{equation}\label{eq:equation 2 for prove p general}
  \begin{aligned}
\igeo(x)&+\iCLM(\Delta,\Gr(A_d\psi_{1,s}(T)), s\in[0,s_0])\\
&=\iCLM(\Delta,\Gr(A_d\psi_{1,s}(0)), s\in[0,s_0])+\iCLM(\Delta,\Gr(A_d\psi_{1,s_0}(t)), t\in[0,T]).
\end{aligned}
\end{equation}
 Since $\psi_{1,s}(0)\equiv \Id$, then $\iCLM(\Delta,\Gr(A_d\psi_{1,s}(0)), s\in[0,s_0])=0$. Summing up Equation~\eqref{eq:equation 1 for prove p general} and Equation~\eqref{eq:equation 2 for prove p general}, we infer that 
  \begin{equation}\label{eq:equation 3 for prove p general}
\igeo(x)=\ispec(x)+\iCLM(\Delta,\Gr(A_d\psi_{1,s_0}(t)), t\in[0,T]).
\end{equation}
By using once again the path additivity and the homotopy properties of the $\iCLM$-index we get
  \begin{multline}\label{eq:equation 4 for prove p general}
\iCLM(\Delta,\Gr(A_d\psi_{0,s_0}(t)), t\in[0,T])+\iCLM(\Delta,\Gr(A_d\psi_{c,s_0}(T)),c\in[0,1])\\
=\iCLM(\Delta,\Gr(A_d\psi_{c,s_0}(0)), c\in[0,1])+\iCLM(\Delta,\Gr(A_d\psi_{1,s_0}(t)), t\in[0,T]).
\end{multline}
 Since $\psi_{c,s_0}(0)\equiv \Id$, then $\iCLM(\Delta,Gr(A_d\psi_{c,s_0}(0)), c\in[0,1])=0$. Arguing as in  Lemma \ref{thm:non-degenerate-s_0-forme}, it follows that, for $s_0$ large enough,  $\mathcal{A}_{c,s_0}$ is non-degenerate and by \cite[Theorem 2.5]{HS09}, we have
 \begin{equation}\label{eq:equation 5 for prove p general}
\spfl(\mathcal{A}_{c,s_0},c\in[0,1])=-\iCLM(\Delta,\Gr(A_d\psi_{c,s_0}(T)), c\in[0,1])=0.
\end{equation}
 By Proposition \ref{thm:index-formula-for-general-P-2}, it follows that 
 \begin{equation}
 \iCLM(\Delta,\Gr(A_d\psi_{0,s_0}(t)), t\in[0,T])=\iCLM(\Gr(\trasp{A}_d), \Gr(\psi_{0,s_0}(t)), t\in[0,T])=\dim\ker(A-\Id)
 \end{equation}
 Summing up the previous arguments,  Equation~\eqref{eq:equation 3 for prove p general} reduces to 
 \begin{equation}\label{eq:equation 7 for prove p general}
\igeo(x)=\ispec(x)+\dim\ker(A-\Id).
\end{equation}
Being $x$  orientation  preserving (by assumption), then $\det A=1$ and being  $A$ also  orthogonal, then we get that 
\begin{itemize}
\item $n \textrm{ even } \Rightarrow \dim\ker(A-\Id) \textrm{ even }$;
\item $n \textrm{ odd } \Rightarrow \dim\ker(A-\Id) \textrm{ odd }$.
\end{itemize}
So, in both cases  we have $n-\dim\ker(A-\Id)$ is even.

Now, if $x$ is linear stable, then by taking into account  Lemma \ref{lem:instability by maslov index}, we get that $\igeo(x)$ is even and by Equation~\eqref{eq:equation 7 for prove p general} $\ispec(x)+\dim\ker(A-\Id)$ is even. Since
\begin{equation}\label{eq:ultima-1}
	n+\ispec(x)=(n-\dim\ker(A-\Id))+(\ispec(x)+\dim\ker(A-\Id)
\end{equation}
and being the (RHS) of Equation~\eqref{eq:ultima-1} sum of two even integers, we conclude that it is  even. This concludes the proof. 
\qedhere


\appendix

\appendix
%
\section{A symplectic excursion on the Maslov index}\label{sec:Maslov}

The purpose of this Section is to provide the  symplectic  preliminaries used in the paper.  In Subsection \ref{subsec:Maslovindexpath} the main 
properties  of the intersection number for curves of Lagrangian subspaces with respect to a  distinguished one are collected and the {\em (relative) Maslov index\/} is 
defined.  Our basic references are \cite{PPT04,CLM94, GPP04, RS93,MPP05,MPP07}.

\subsection{A quick recap on the $\iCLM$-index}\label{subsec:Maslovindexpath}

Given a $2n$-dimensional (real) symplectic space $(V,\omega)$, a {\em 
Lagrangian 
subspace\/} of $V$ is an $n$-dimensional subspace $L \subset V$ such that $L = 
L^\omega$ where $L^\omega$ denotes the {\em symplectic orthogonal\/}, i.e. the 
orthogonal 
with respect to the symplectic structure. 
We denote by $ \Lagr= \Lagr(V,\omega)$ the {\em Lagrangian Grassmannian of 
$(V,\omega)$\/}, namely the set of all Lagrangian subspaces of $(V, \omega)$
\[
\Lagr(V,\omega)\=\Set{L \subset V| L= L^{\omega}}.
\]
It is well-known that $\Lagr(V,\omega)$ is a manifold. For each $L_0 \in \Lagr$, 
let 
\[
\Lagr^k(L_0) \= \Set{L \in \Lagr(V,\omega) | \dim\big(L \cap L_0\big) =k } 
\qquad k=0,\dots,n.
\]
Each $\Lagr^k(L_0)$ is a real compact, connected submanifold of codimension 
$k(k+1)/2$. The topological closure 
of $\Lagr^1(L_0)$  is the {\em Maslov cycle\/} that can be also described as 
follows
\[
 \Sigma(L_0)\= \bigcup_{k=1}^n \Lagr^k(L_0)
\]
The top-stratum $\Lagr^1(L_0)$ is co-oriented meaning that it has a 
transverse orientation. To be 
more precise, for each $L \in \Lagr^1(L_0)$, the path of Lagrangian subspaces 
$(-\delta, \delta) \mapsto e^{tJ} L$ cross $\Lagr^1(L_0)$ transversally, and as 
$t$ increases the path points to the transverse direction. Thus  the Maslov cycle is two-sidedly embedded in 
$\Lagr(V,\omega)$. Based on the topological properties of the Lagrangian 
Grassmannian manifold, 
it is possible to define a fixed endpoints homotopy invariant called {\em Maslov 
index\/}.

\begin{defn}\label{def:Maslov-index}
Let $L_0 \in \Lagr(V,\omega)$ and let $\ell:[0,1] \to \Lagr(V, \omega)$ be a 
continuous path. We 
define the {\em Maslov index\/} $\iCLM$ as follows:
\[
 \iCLM(L_0, \ell(t); t \in[a,b])\= \left[e^{-\varepsilon J}\, \ell(t): 
\Sigma(L_0)\right]
\]
where the right hand-side denotes the intersection number and $0 < \varepsilon 
<<1$.
\end{defn}
For further reference we refer the interested reader to \cite{CLM94} and references therein. 
\begin{rem}
 It is worth noticing that for $\varepsilon>0$ small enough, the Lagrangian 
subspaces 
 $e^{-\varepsilon J} \ell(a)$ and $e^{-\varepsilon J} \ell(b)$ are off the 
singular cycle. 
\end{rem}
One efficient way to compute the Maslov index, was introduced by authors in 
\cite{RS93} via 
crossing forms. Let $\ell$ be a $\mathscr C^1$-curve of Lagrangian subspaces 
such that 
$\ell(0)= L$ and let $W$ be a fixed Lagrangian subspace transversal to $L$. For 
$v \in L$ and 
small enough $t$, let $w(t) \in W$ be such that $v+w(t) \in \ell(t)$.  Then the 
form 
\[
 Q(v)= \dfrac{d}{dt}\Big\vert_{t=0} \omega \big(v, w(t)\big)
\]
is independent on the choice of $W$. A {\em crossing instant\/} for $\ell$ is an 
instant $t \in [a,b]$ 
such that $\ell(t)$ intersects $W$ nontrivially. At each crossing instant, we 
define the 
crossing form as 
\[
 \Gamma\big(\ell(t), W, t \big)= Q|_{\ell(t)\cap W}.
\]
A crossing is termed {\em regular\/} if the crossing form is non-degenerate. If 
$\ell$ is regular meaning that 
it has only regular crossings, then the Maslov index is equal to 
\begin{equation}\label{eq:iclm-crossings}
 \iCLM\big(W, \ell(t); t \in [a,b]\big) = \coiMor\big(\Gamma(\ell(a), W; a)\big)+ 
\sum_{a<t<b} 
 \sgn\big(\Gamma(\ell(t), W; t\big)- \iMor\big(\Gamma(\ell(b), W; b\big)
\end{equation}
where the summation runs over all crossings $t \in (a,b)$ and $\coiMor, \iMor$ 
are the dimensions  of 
the positive and negative spectral spaces, respectively and $\sgn\= 
\coiMor-\iMor$ is the  signature. 
(We refer the interested reader to \cite{LZ00} and \cite[Equation (2.15)]{HS09}). 
We close this section by 
recalling some useful 
properties of the Maslov index. \\
\begin{itemize}
\item[]{\bf Property I (Reparametrization invariance)\/}. Let $\psi:[a,b] \to 
[c,d]$ be a 
continuous and piecewise smooth function with $\psi(a)=c$ and $\psi(b)=d$, then 
\[
 \iCLM\big(W, \ell(t)\big)= \iCLM(W, \ell(\psi(t))\big). 
\]
\item[] {\bf Property II (Homotopy invariance with respect to the ends)\/}. For 
any $s \in [0,1]$, 
let $s\mapsto \ell(s,\cdot)$ be a continuous family of Lagrangian paths 
parametrised on $[a,b]$ and 
such that $\dim\big(\ell(s,a)\cap W\big)$ and $\dim\big(\ell(s,b)\cap W\big)$ 
are constants, then 
\[
 \iCLM\big(W, \ell(0,t);t \in [a,b]\big)=\iCLM\big(W, \ell(1,t); t \in 
[a,b]\big).
\]
\item[]{\bf Property III (Path additivity)\/}. If $a<c<b$, then
\[
 \iCLM\big(W, \ell(t);t \in [a,b]\big)=\iCLM\big(W, \ell(t); t \in [a,c]\big)+
 \iCLM\big(W, \ell(t); t \in [c,b]\big) 
\]
\item[]{\bf Property IV (Symplectic invariance)\/}. Let $\Phi:[a,b] \to \Sp(2n, 
\R)$. Then 
\[
 \iCLM\big(W, \ell(t);t \in [a,b]\big)= \iCLM\big(\Phi(t)W, \Phi(t)\ell(t); t 
\in [a,b]\big).
\]
\end{itemize}


\subsection{The symplectic group and the $\iomega{1}$-index}

In the standard symplectic space $(\R^{2n}, \omega)$ we denote by $J$ the standard symplectic matrix defined by $J=\begin{bmatrix} 0&-\Id\\ \Id &0\end{bmatrix}$. The symplectic form $\omega$ can be  represented with respect to the Euclidean product $\langle\cdot, \cdot\rangle$ by $J$ as follows $\omega(z_1,z_2)=\langle J z_1,z_2\rangle$ for every $z_1, z_2 \in \R^{2n}$.  We consider the 1-codimensional (algebraic) subvariety 
\[
\Sp(2n, \R)^{0}\=\{M\in \Sp(2n, \R)| \det(M-\Id)=0\} \subset \Sp(2n, \R)
\] 
and let us define
\[
\Sp(2n,\R)^{*}=\Sp(2n,\R)\backslash \Sp(2n, \R)^{0}=\Sp(2n,
\R)^{+}\cup
\Sp(2n, \R)^{-}
\]
where 
\begin{multline}
\Sp(2n,\R)^+\=\{M\in \Sp(2n,\R)| \det(M-\Id)>0\} \quad \textrm{ and }\\
\Sp(2n,\R)^- \=\{M\in \Sp(2n,\R)
|\det(M-\Id)<0\}.
\end{multline}

For any $M\in \Sp(2n,\R)^{0}$, $\Sp(2n,\R)^{0}$ is
co-oriented at the point $M$
 by choosing  as positive direction the direction determined by
 $\frac{d}{dt}Me^{tJ}|_{t=0}$ with $t\geq0$ sufficiently small.  We recall that $\Sp(2n,\R)^{+}$ and
$\Sp(2n,\R)^{-}$ are two path connected
 components of $\Sp(2n,\R)^{*}$
  which are simple connected in $\Sp(2n,\R)$. (For the proof of these facts we refer, for instance,  the interested reader to \cite[pag.58-59]{Lon02} and references therein).  Following authors in \cite[Definition 2.1]{LZ00} we start by recalling the following definition.
 \begin{defn}\label{def:Maslov-index-ok}
Let $\psi:[a,b]\rightarrow\Sp(2n,\R)$ be a continuous path. Then there exists an $\varepsilon>0$ such that for every $\theta\in[-\varepsilon,\varepsilon]\setminus\{0\}$, the matrices $\psi(a)e^{J\theta}$ and $\psi(b)e^{J\theta}$  lying both out of $\Sp(2n,\R)^0$ . We define the {\em $\iomega{1}$-index\/} or the {\em Maslov-type index\/} as follows
\begin{equation}
\iomega{1}(\psi)\=[e^{-J\varepsilon}\psi:\Sp(2n,\R)^{0}]
\end{equation}
where the (RHS) denotes the intersection number between the perturbed path $t\mapsto e^{-J\varepsilon} \psi(t)$ with the singular cycle $\Sp(2n,\R)^0$. 
\end{defn}
Through the parity of the $\iomega{1}$-index it is possible to locate  endpoints of the perturbed symplectic path  $t\mapsto e^{-J\varepsilon} \psi(t)$.  
 \begin{lem}\label{lem:parity property}{\bf (\cite[Lemma 5.2.6]{Lon02})\/}
 Let $\psi:[a,b] \to \Sp(2n, \R)$ be a continuous path.  The  following characterization holds
 \begin{itemize}
 \item[] $\iomega{1}(\psi)$ is even $\iff$ both the endpoints
 $e^{-\varepsilon J}\psi(a)$ and $e^{-\varepsilon J}\psi(b)$ lie in $\Sp(2n, \R)^+$ or
in $\Sp(2n, \R)^-$.
 \end{itemize}
 \end{lem}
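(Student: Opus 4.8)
The plan is to argue directly from the definition of the $\iomega{1}$-index (Definition \ref{def:Maslov-index-ok}) as an intersection number with the singular cycle $\Sp(2n,\R)^0$, to reduce the computation of its parity to a count of transversal crossings, and then to observe that each such crossing toggles the sign of the function $f(M)\=\det(M-\Id)$, whose sign is precisely what distinguishes $\Sp(2n,\R)^+$ from $\Sp(2n,\R)^-$.

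First I would fix $\varepsilon>0$ as in Definition \ref{def:Maslov-index-ok} and put $\gamma\= e^{-\varepsilon J}\psi\colon[a,b]\to\Sp(2n,\R)$, so that $\gamma(a),\gamma(b)\notin\Sp(2n,\R)^0$. Since the intersection number $[\gamma:\Sp(2n,\R)^0]$ is invariant under homotopies of $\gamma$ keeping both endpoints off $\Sp(2n,\R)^0$, and since a $\mathscr{C}^1$-small perturbation of $\gamma$ keeps each endpoint in the same connected component of $\Sp(2n,\R)^\ast=\Sp(2n,\R)^+\cup\Sp(2n,\R)^-$, I may replace $\gamma$ by a generic such perturbation, transverse to $\Sp(2n,\R)^0$. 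Here one invokes the standard stratification of the real algebraic variety $\Sp(2n,\R)^0$: its top stratum $\{M:\dim\ker(M-\Id)=1\}$ is a smooth hypersurface along which $df$ does not vanish, whereas its singular locus $\{M:\dim\ker(M-\Id)\geq 2\}$ (on which $\operatorname{adj}(M-\Id)=0$, hence $df=0$) has codimension $\geq 2$ in $\Sp(2n,\R)$. Consequently a generic $\gamma$ meets $\Sp(2n,\R)^0$ only at finitely many parameters $a<t_1<\dots<t_k<b$, each a regular point of $f$ at which $\gamma$ is transverse to $\{f=0\}$; thus $t\mapsto f(\gamma(t))$ has a simple zero, and in particular changes sign, at each $t_i$.

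I would then record two consequences of this normal form. On the one hand, by the definition of the intersection number, $\iomega{1}(\psi)=\sum_{i=1}^{k}\sigma_i$ with $\sigma_i\in\{\pm1\}$ prescribed by the co-orientation of $\Sp(2n,\R)^0$, so that $\iomega{1}(\psi)\equiv k\pmod 2$. On the other hand, $f\circ\gamma$ is continuous, nonzero off $\{t_1,\dots,t_k\}$, and changes sign exactly at those $k$ parameters, whence $\sgn\big(f(\gamma(b))\big)=(-1)^{k}\,\sgn\big(f(\gamma(a))\big)$. Recalling that $\gamma(t)\in\Sp(2n,\R)^+\iff f(\gamma(t))>0$ and $\gamma(t)\in\Sp(2n,\R)^-\iff f(\gamma(t))<0$, this says exactly that $\gamma(a)$ and $\gamma(b)$ lie on the same side of $\Sp(2n,\R)^0$ if and only if $k$ is even.

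Combining the two observations, $\iomega{1}(\psi)$ is even $\iff k$ is even $\iff e^{-\varepsilon J}\psi(a)$ and $e^{-\varepsilon J}\psi(b)$ both lie in $\Sp(2n,\R)^+$ or both lie in $\Sp(2n,\R)^-$, which is the claimed characterization. The only step requiring genuine care is the transversality reduction of the second paragraph, namely verifying that the singular locus of $\Sp(2n,\R)^0$ has codimension at least two in $\Sp(2n,\R)$ and that $f=\det(\cdot-\Id)$ is a defining submersion for its top stratum; these are classical facts about the symplectic group (cf.\ \cite[pag.~58--59]{Lon02}), and they are precisely what legitimizes identifying the intersection number modulo $2$ with the number of sign changes of $\det(\cdot-\Id)$ along $\gamma$. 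This recovers \cite[Lemma 5.2.6]{Lon02}.
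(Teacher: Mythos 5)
The paper itself does not prove this lemma: it is stated as an imported result, with a bare citation to \cite[Lemma 5.2.6]{Lon02}. So there is no ``paper's proof'' to compare against, and you should instead be judged on whether your direct argument is sound.

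Your argument is correct, and it is essentially the standard way of seeing this fact. The key observations are accurately in place: (i) after the $e^{-\varepsilon J}$ perturbation the endpoints lie in $\Sp(2n,\R)^\ast$, which is open, so a $\mathscr C^1$-small perturbation of the path keeps both endpoints in the same component; (ii) the stratum $\{\dim\ker(M-\Id)\geq 2\}\subset\Sp(2n,\R)^0$ has codimension at least two (in fact at least three, cf.\ Long's book), so by Thom transversality a generic path meets only the top stratum, transversally, at finitely many parameters; (iii) at each such transversal crossing, $f(M)=\det(M-\Id)$ has a simple zero along the path, so $f\circ\gamma$ changes sign there; (iv) the intersection number is a signed count $\sum\sigma_i$ with $\sigma_i\in\{\pm1\}$ over those crossings, so mod $2$ it equals their number $k$; and (v) the sign of $f$ is exactly what separates $\Sp(2n,\R)^+$ from $\Sp(2n,\R)^-$, so ``same side'' is equivalent to $k$ even. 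The one point where you are (correctly) careful is that $df$ must be nonvanishing along the top stratum \emph{after restriction to} $T_M\Sp(2n,\R)$; as you say, this is precisely what underlies the co-orientation by $\frac{d}{dt}Me^{tJ}\vert_{t=0}$ recorded before Definition~\ref{def:Maslov-index-ok}, and it is one of the ``classical facts'' you rightly attribute to \cite[pp.~58--59]{Lon02}. The only way your write-up differs in spirit from Long's treatment is that Long derives the parity statement from his explicit rotation-number/normal-form machinery for symplectic paths, whereas you give a purely differential-topological transversality argument; both hinge on the same sign-toggling of $\det(\cdot-\Id)$, and yours has the virtue of being self-contained modulo the two stratification facts you cite.
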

We close this section with a rather technical  result which will be used in the proof of the main instability criterion. 
\begin{lem}{\bf (\cite[Lemma 3.2]{HS10}.
)\/}\label{prop:how to know in which component}
 Let $\psi:[a,b] \to \Sp(2n, \R)$ be a continuous symplectic path such that $\psi(0)$ is linearly stable.
 \begin{enumerate}
  \item  If $1 \notin \sigma\big(\psi(a)\big)$ then there exists $\varepsilon >0$
sufficiently small such that
  $\psi(s) \in \Sp(2n, \R)^+$ for $|s| \in (0, \varepsilon)$.
  \item We assume that $\dim \ker \big(\psi(a)-\Id\big)=m$ and
  $\trasp{\psi(a)}J \psi'(a)\vert_V$ is non-singular for $V\= \psi^{-1}(a) \R^{2m}$. If
  $\ind\big({\trasp{\psi(a)}J \psi'(a)\vert_V}\big)$ is even [resp. odd] then there exists $\delta>0$
  sufficiently small such that  $\psi(s) \in \Sp(2n, \R)^+$
  [resp. $\psi(s) \in \Sp(2n, \R)^-$] for $|s| \in (0, \delta)$.
  \end{enumerate}
\end{lem}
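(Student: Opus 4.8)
The statement concerns only the sign of the real number $\det\big(\psi(s)-\Id\big)$ for $s$ close to $a$ (where $\psi(a)$ is the linearly stable matrix), so the plan is to split the spectrum of $M\=\psi(a)$ into the eigenvalue $1$ and its complement: the part of $\sigma(M)$ away from $1$ will be shown to contribute a factor of positive sign, while the cluster of eigenvalues of $\psi(s)$ near $1$ is, to first order in $s-a$, governed by the crossing form $\Gamma\=\trasp{\psi(a)}J\psi'(a)\big\vert_V$ on $V\=\ker(\psi(a)-\Id)$. As a preliminary, differentiating $\trasp{\psi(s)}J\psi(s)=J$ at $s=a$ shows that $\trasp{M}J\psi'(a)$ is symmetric, so $\Gamma$ and its negative (Morse) index $\ind(\Gamma)$ are well defined.

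First I would record the elementary fact that any $N\in\Sp(2k,\R)$ with $\sigma(N)\subset\U$ and $1\notin\sigma(N)$ satisfies $\det(N-\Id)>0$: its eigenvalues split into conjugate pairs $\{e^{\pm i\theta}\}$, $\theta\in(0,\pi)$, each pair contributing the positive factor $|e^{i\theta}-1|^2$, together with the eigenvalue $-1$, whose generalized eigenspace is $\omega$-orthogonal to every other generalized eigenspace and is therefore a symplectic, hence even-dimensional, subspace, so that $-1$ has even algebraic multiplicity and contributes a positive factor as well. Applied to $M$ this already gives assertion (1): $M\in\Sp(2n,\R)^+$ when $1\notin\sigma(M)$, and since $s\mapsto\det(\psi(s)-\Id)$ is continuous and nonzero at $s=a$, the matrices $\psi(s)$ stay in $\Sp(2n,\R)^+$ for all $s$ with $|s-a|$ small.

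For assertion (2), linear stability of $M$ forces $V$ to be the full generalized $1$-eigenspace, which by the same orthogonality argument is a symplectic subspace of even dimension $2\ell$, so that $\R^{2n}=V\oplus W$ with $W$ the $M$-invariant symplectic complement. By analytic perturbation theory (Riesz projections along small circles enclosing $1$ and $\sigma(M)\setminus\{1\}$ respectively) one gets, for $s$ near $a$, a continuously varying $\psi(s)$-invariant splitting $\R^{2n}=V(s)\oplus W(s)$ equal to $V\oplus W$ at $s=a$, with $\sigma\big(\psi(s)\vert_{V(s)}\big)$ clustered near $1$ and $\sigma\big(\psi(s)\vert_{W(s)}\big)$ bounded away from $1$. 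Factoring the determinant,
\[
\det\big(\psi(s)-\Id\big)=\det\big((\psi(s)-\Id)\vert_{V(s)}\big)\cdot\det\big((\psi(s)-\Id)\vert_{W(s)}\big),
\]
the $W(s)$-factor tends to $\det\big((M-\Id)\vert_W\big)>0$ as $s\to a$ (the elementary fact again, since $\sigma(M\vert_W)\subset\U\setminus\{1\}$) and so is positive for $|s-a|$ small; hence the sign of $\det(\psi(s)-\Id)$ is that of the $V(s)$-factor.

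Finally I would transport $\psi(s)\vert_{V(s)}$ to the fixed space $V$ by a family of isomorphisms equal to $\Id$ at $s=a$ and differentiable there, writing $\widetilde\psi(s)=\Id_V+(s-a)C+o(s-a)$, so that $\det\big((\psi(s)-\Id)\vert_{V(s)}\big)=(s-a)^{2\ell}\det C+o\big((s-a)^{2\ell}\big)$, whose sign for small $s-a\neq0$ is $\sgn\det C$ since $2\ell$ is even. The hard part — and the step I expect to be the main obstacle — is the identification $\det C=\det\Gamma$: one must show that the leading-order motion of the eigenvalues of $\psi(s)$ through $1$ makes $C$ symplectically conjugate to $J_V\Gamma$, where $J_V$ is the complex structure of $(V,\omega\vert_V)$; granting this, $\det C=(\det J_V)(\det\Gamma)=\det\Gamma$ and $\sgn\det\Gamma=(-1)^{\ind(\Gamma)}$ by non-degeneracy of $\Gamma$. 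This identification is precisely the crossing-form computation for the $\iomega{1}$-index (the symplectic analogue of the formula \eqref{eq:iclm-crossings}; cf.\ \cite{LZ00,Lon02}), and carrying it out with care — in particular that the transport map can be chosen symplectic and that the first-order term is the right representative of $\Gamma$ — is where the genuine work lies. With it in hand, for $s$ with $0<|s-a|$ small we get $\sgn\det(\psi(s)-\Id)=(-1)^{\ind(\Gamma)}$, i.e.\ $\psi(s)\in\Sp(2n,\R)^+$ when $\ind(\Gamma)$ is even and $\psi(s)\in\Sp(2n,\R)^-$ when it is odd, which is assertion (2); the full argument is in \cite[Lemma~3.2]{HS10}.
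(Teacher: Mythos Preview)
The paper does not supply a proof of this lemma: it is quoted verbatim from \cite[Lemma~3.2]{HS10} and used as a black box (in Lemma~\ref{lem:instability by maslov index} and Lemma~\ref{lem:linear stable is in positive side}). So there is no in-paper argument to compare your proposal against.

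That said, your outline is correct and is essentially the argument one finds in \cite{HS10}: split off the eigenvalue cluster near $1$ via Riesz projections, observe that the complementary block contributes a positive factor because a linearly stable symplectic matrix without eigenvalue $1$ lies in $\Sp^+$, and reduce to the first-order behaviour on $V=\ker(\psi(a)-\Id)$. The step you single out as ``the hard part'' is in fact short once set up properly. Since $V$ and its symplectic complement $W$ are $\omega$-orthogonal, for $v,w\in V$ one has
\[
\Gamma(v,w)=\omega\big(\psi'(a)v,w\big)=\omega\big(P_V\psi'(a)v,w\big)=\omega_V(Cv,w),
\]
where $C\=P_V\psi'(a)\vert_V\colon V\to V$ is exactly the operator governing the first-order splitting of the eigenvalue $1$ (standard perturbation theory, \cite[II.\S2]{Kat80}). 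In a symplectic basis of $V$ this reads $\Gamma=J_V C$, whence $\det\Gamma=\det C$ and $\sgn\det C=(-1)^{\ind(\Gamma)}$ as you claim. No additional ``symplectic transport'' is needed beyond this; the identification is immediate from the $\omega$-orthogonality of the spectral decomposition. One small point: the lemma as stated writes $\dim\ker(\psi(a)-\Id)=m$ and later appears to treat the kernel as $2m$-dimensional; your observation that linear stability forces $V$ to coincide with the full generalized $1$-eigenspace, hence to be symplectic and even-dimensional, is what resolves the bookkeeping.
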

\begin{rem}
Knowing that $M \in \Sp(2n,\R)^0$, without any further information, it is not possible a priori to locate in which path connected components of $\Sp(2n, \R)^*$ is located the perturbed  matrix $e^{\pm \delta J}M$ for arbitrarily small positive  $\delta$. However if $M$ is linearly stable, we get the following result.
\end{rem} 
\begin{lem}\label{lem:linear stable is in positive side}
 Let $M\in \Sp(2n, \R) $ be a linearly stable symplectic matrix (meaning that $\spec{M} \subset \U$ and $M$ is diagonalizable). 
 Then, there exists $\delta >0$ sufficiently small such that $e^{\pm \delta J}M \in \Sp(2n,\R)^+$.
\end{lem}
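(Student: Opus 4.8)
The plan is to recognize $e^{\pm\delta J}M$ as two points on the symplectic path $s\mapsto\psi(s)\=e^{sJ}M$ through $\psi(0)=M$, and to invoke Lemma~\ref{prop:how to know in which component}. Since $\psi'(s)=J\psi(s)$, one has $\psi'(0)=JM$ and therefore
\[
\trasp{\psi(0)}J\,\psi'(0)=\trasp{M}J\cdot JM=\trasp{M}J^2M=-\trasp{M}M,
\]
which is negative definite on all of $\R^{2n}$, hence on every subspace. If $1\notin\spec(M)$, part~(1) of Lemma~\ref{prop:how to know in which component} applies verbatim (with $\psi(0)=M$ linearly stable) and yields $e^{\pm\delta J}M\in\Sp(2n,\R)^+$ for all small $\delta>0$. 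If $1\in\spec(M)$, then $m\=\dim\ker(M-\Id)>0$; because $M$ is semisimple and symplectic, $\ker(M-\Id)$ equals the generalized $1$-eigenspace, which is a symplectic — hence even-dimensional — subspace, so the subspace $V$ figuring in part~(2) is even-dimensional. The restriction of $-\trasp{M}M$ to $V$ is negative definite, so it is non-singular and its index equals $\dim V$, which is even; part~(2) then gives $e^{\pm\delta J}M\in\Sp(2n,\R)^+$ for small $\delta>0$.

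For a self-contained argument I would instead compute $\det(e^{-\delta J}M-\Id)=\det(M-e^{\delta J})$ directly. Decompose $\R^{2n}=W\oplus W'$ into the $M$-invariant subspaces $W\=\ker(M-\Id)$ and $W'\=\im(M-\Id)$; by symplectic linear algebra $W'=W^{\perp_\omega}$, both summands are symplectic, and $\dim W=m$ is even. The eigenvalues of $e^{-\delta J}M$ staying away from $1$ contribute to the determinant a factor converging as $\delta\to0$ to $\prod_{\mu\in\spec(M)\setminus\{1\}}(\mu-1)$, which is positive: non-real eigenvalues occur in conjugate pairs contributing $|\mu-1|^2$, and $-1$ — the only possible real eigenvalue $\ne1$ of a symplectic matrix with spectrum in $\U$ — has even multiplicity. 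The $m$ eigenvalues approaching $1$ contribute, by first-order perturbation theory, the factor $(-\delta)^m\det\!\big(P_W J|_W\big)+o(\delta^m)$, where $P_W$ is the projection onto $W$ along $W'$.

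The conceptual heart is then the claim $\det\!\big(P_WJ|_W\big)>0$. It follows from the identity
\[
\omega\big(P_WJw_1,w_2\big)=\omega(Jw_1,w_2)=\langle J^2w_1,w_2\rangle=-\langle w_1,w_2\rangle,\qquad w_1,w_2\in W,
\]
the first equality holding because $(\Id-P_W)Jw_1\in W'=W^{\perp_\omega}$. Writing the (non-degenerate, skew) Gram matrix of $\omega|_W$ in a $\langle\cdot,\cdot\rangle$-orthonormal basis of $W$ as $\mathcal J_W$, this says $\trasp{N}\mathcal J_W=-\Id$ for $N\=P_WJ|_W$, whence $\det N=(\det\mathcal J_W)^{-1}>0$ since $\det\mathcal J_W$ is a squared Pfaffian of a non-degenerate skew matrix. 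Combining the two factors, $\det(e^{-\delta J}M-\Id)=\delta^m\cdot(\text{positive})+o(\delta^m)>0$ for all small $\delta\ne0$ (here $m$ even is used), and the same holds for $e^{+\delta J}M$ upon replacing $\delta$ by $-\delta$; this gives $e^{\pm\delta J}M\in\Sp(2n,\R)^+$.

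The step requiring the most care is the perturbation-theoretic identification of the leading term of the near-$1$ eigenvalues, i.e.\ justifying that the spectral projection of $e^{-\delta J}M$ onto the eigenvalues clustered at $1$ converges to $P_W$ and that the corresponding restricted operator is $-\delta\,P_WJ|_W+O(\delta^2)$; everything else is either the sign bookkeeping above or the elementary symplectic fact that the generalized $1$-eigenspace of a semisimple symplectic matrix is even-dimensional, which is precisely what makes the parity come out right. I would therefore present the short argument via Lemma~\ref{prop:how to know in which component} as the main proof, with the determinant computation relegated to a remark for the reader who wants to see the positivity explicitly.
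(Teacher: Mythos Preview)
Your main argument via Lemma~\ref{prop:how to know in which component} is correct and is essentially the paper's proof, with one cosmetic difference: the paper parametrizes the path as $M(\theta)=e^{-\theta J}M$, so the crossing form comes out as $+\trasp{M}M$, which is \emph{positive} definite; its Morse index is $0$, automatically even, and the case split you perform is unnecessary. Your sign choice $\psi(s)=e^{sJ}M$ produces $-\trasp{M}M$, forcing you to argue that $\dim\ker(M-\Id)$ is even via the symplectic-subspace observation --- correct, but avoidable. Your second, self-contained determinant computation is a genuinely different route not present in the paper; it trades the black-box appeal to Lemma~\ref{prop:how to know in which component} for an explicit spectral-perturbation analysis, which is more transparent but, as you note yourself, requires care in justifying the leading-order behavior of the eigenvalue cluster near~$1$.
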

\begin{proof}
Let us consider the  symplectic path pointwise defined by
 $M(\theta)\= e^{-\theta J}M$. By a direct computation we get that
\[
 \trasp{M(\theta)} J \dfrac{d}{d\theta} M(\theta)\Big\vert_{\theta=0} =
\trasp{M}M.
\]
We observe that $\trasp{M}M$ is symmetric and positive semi-definite; moreover since $M$ invertible it follows that
$\trasp{M}M$ is actually positive definite. Thus, in particular,
$\iiindex(\trasp{M}M)=0$. By invoking Lemma \ref{prop:how to know in which component} it follows
that there exists $\delta >0$ such that $M(\pm \delta) \in \Sp(2n, \R)^+ $. This concludes the proof.
\end{proof}


\section{On the Spectral Flow for bounded selfadjoint Fredholm operators}\label{sec:spectral-flow}
%
Let $\mathcal W, \mathcal H$ be  real separable Hilbert spaces with a dense 
and 
continuous inclusion $\mathcal W \hookrightarrow \mathcal H$.
\begin{note}
We denote by  
$\mathcal{B}(\mathcal W,\mathcal H)$ the Banach  space of all linear bounded 
operators (if $\mathcal W=\mathcal H$ we use the shorthand notation 
$\mathcal{B}(\mathcal H)$); by $\mathcal{B}^{sa}(\mathcal W, \mathcal H)$ we 
denote the set of all  bounded selfadjoint operators when regarded as operators 
on  $\mathcal H$ and finally $\mathcal{BF}^{sa}(\mathcal W, \mathcal H)$ 
denotes 
the set of all bounded selfadjoint Fredholm operators and we recall that an 
operator $T \in \mathcal{B}^{sa}(\mathcal W, \mathcal H)$ is Fredholm if and 
only if its kernel is finite dimensional and its image is closed. 
\end{note}
For $T \in\mathcal{B}(\mathcal W, \mathcal H)$ we recall that the {\em spectrum 
\/} of $T$ is $\sigma(T)\= \Set{\lambda \in \C| T-\lambda I \text{ is not 
invertible}}$ and that $\sigma(T)$ is decomposed into the {\em essential 
spectrum\/} and the {\em discrete spectrum\/} defined respectively as 
$\sigma_{ess}(T) \= \Set{\lambda \in \C| T-\lambda I \notin 
\mathcal{BF}(\mathcal W, \mathcal H)}$ and $\sigma_{disc}(T)\= \sigma(T) 
\setminus \sigma_{ess}(T)$.
It is worth noting that $\lambda \in \sigma_{disc}(T)$ if and only if it is an 
isolated point in $\sigma(T)$ and $\dim \ker (T - \lambda I)<\infty$.

Let now $T \in \mathcal{BF}^{sa}(\mathcal W,\mathcal H)$, then either $0$ is 
not 
in $\sigma(T)$ or it is in $\sigma_{disc}(T)$ (cf. \cite[Lemma 2.1]{Wat15}), 
and, as a consequence of the Spectral Decomposition Theorem (cf. \cite[Theorem 
6.17, Chapter 
III]{Kat80}), the following orthogonal decomposition holds
\[
 \mathcal W = E_-(T) \oplus \ker T \oplus E_+(T),
\]
with the property
\[
 \sigma(T) \cap(-\infty, 0)= \sigma\left(T_{E_-(T)}\right) \textrm{ and } 
 \sigma(T) \cap(0,+\infty)= \sigma\left(T_{E_+(T)}\right).
\]
\noindent
\begin{defn}\label{def:Morseindex}
Let $T \in \mathcal{BF}^{sa}(\mathcal W,\mathcal H)$. If $\dim E_-(T)<\infty$ 
(resp.  $\dim 
E_+(T)<\infty$), 
we define its {\em Morse index\/} (resp. {\em Morse co-index\/})
as the integer denoted by $\iMor(T)$  (resp. $\coiMor(T)$) and defined as:
\[
 \iMor(T) \= \dim E_-(T)\qquad \big(\textrm{resp. } \coiMor(T)\= \dim E_+(T)\big).
\]
\end{defn}
\smallskip
The space  $\mathcal{BF}^{sa}(\mathcal H)$ was intensively 
investigated by Atiyah and Singer in \cite{AS69} \footnote{%
Actually, in this reference, only skew-adjoint Fredholm operators were 
considered, but the case of bounded selfadjoint Fredholm operators presents no 
differences.} and the following important topological characterisation can be 
deduced.
\begin{prop}\label{thm:as69} (Atiyah-Singer, \cite{AS69})
The space $\mathcal{BF}^{sa}(\mathcal H)$ consists of three connected 
components:
\begin{itemize}
\item the {\em essentially positive\/} $
 \mathcal{BF}^{sa}_+(\mathcal H)\=\Set{T \in\mathcal{BF}^{sa}(\mathcal 
H)|\sigma_{ess}(T) 
  \subset (0,+\infty)}$; 
 \item the  {\em essentially negative\/} 
  $\mathcal{BF}^{sa}_-(\mathcal H)\=\Set{T \in\mathcal{BF}^{sa}(\mathcal 
H)|\sigma_{ess}(T) 
  \subset (-\infty,0)}$;
  \item the {\em strongly indefinite\/}
$ \mathcal{BF}^{sa}_*(\mathcal H)\=\mathcal{BF}^{sa}(\mathcal 
H)\setminus(\mathcal{BF}^{sa}_+(\mathcal H)
  \cup \mathcal{BF}^{sa}_-(\mathcal H)). $
  \end{itemize}
The spaces $\mathcal{BF}_+^{sa}(\mathcal H),\mathcal{BF}_-^{sa}(\mathcal H)$ 
are  contractible (actually convex), whereas $ \mathcal{BF}^{sa}_*(\mathcal 
H)$ is topological non-trivial; more precisely, $\pi_1(\mathcal{BF}^{sa}_*(\mathcal 
H))\simeq\Z.$
\end{prop}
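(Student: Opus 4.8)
The statement is the classical theorem of Atiyah and Singer \cite{AS69}; the plan is to reconstruct its proof in the present bounded self-adjoint setting. \textbf{Step 1 (identifying the three pieces).} First I would record that a bounded self-adjoint operator is Fredholm exactly when $0\notin\sigmaess$, so that for $T\in\mathcal{BF}^{sa}(\mathcal H)$ the Spectral Decomposition Theorem recalled above yields $\iMor(T)<+\infty \iff \sigmaess(T)\subset(0,+\infty)$ and, symmetrically, $\coiMor(T)<+\infty \iff \sigmaess(T)\subset(-\infty,0)$. Hence $\mathcal{BF}^{sa}_+(\mathcal H)=\{\,T:\iMor(T)<+\infty\,\}$, $\mathcal{BF}^{sa}_-(\mathcal H)=\{\,T:\coiMor(T)<+\infty\,\}$, and $\mathcal{BF}^{sa}_*(\mathcal H)$ is the set on which both indices are infinite; these three sets are pairwise disjoint and cover $\mathcal{BF}^{sa}(\mathcal H)$. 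Norm-continuity of the spectrum, together with the standard stability of $\sigmaess$ of a self-adjoint operator under small norm perturbations, shows each of the three sets is open, hence — since they partition $\mathcal{BF}^{sa}(\mathcal H)$ — also closed; so there remains only to prove each is path-connected and to determine its homotopy type.

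\textbf{Step 2 (the essentially definite components).} For $T\in\mathcal{BF}^{sa}_+(\mathcal H)$ the spectral projection $\Pi$ onto $\sigma(T)\cap(-\infty,0]$ is finite rank and $T+c\,\Pi$ is positive definite for $c$ large, so every element of $\mathcal{BF}^{sa}_+(\mathcal H)$ is (positive definite invertible) $+$ (self-adjoint finite rank); conversely every (positive definite invertible) $+$ (self-adjoint compact) operator is Fredholm with the same essential spectrum by Weyl's theorem, hence lies in $\mathcal{BF}^{sa}_+(\mathcal H)$. As the positive definite invertible operators form a convex cone and the self-adjoint compact operators a linear subspace, $\mathcal{BF}^{sa}_+(\mathcal H)$ is convex, and the affine contraction $(s,T)\mapsto(1-s)T+s\,\Id$ retracts it onto $\Id$; the same argument with signs reversed settles $\mathcal{BF}^{sa}_-(\mathcal H)$.

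\textbf{Step 3 (the strongly indefinite component).} This is where the real work lies. For path-connectedness I would perturb $T\in\mathcal{BF}^{sa}_*(\mathcal H)$ by a finite-rank operator so that $0$ becomes a regular value, split off the finite-dimensional part of the spectrum near $0$, and homotope the remainder to a block model $\mathrm{diag}(\Id_{\mathcal H_+},-\Id_{\mathcal H_-})$ with $\mathcal H_\pm$ both infinite dimensional; any two such models are unitarily conjugate, and Kuiper's theorem (norm-contractibility of $U(\mathcal H)$) supplies a path between them. For the fundamental group I would pass through the Cayley transform $T\mapsto(T-i\,\Id)(T+i\,\Id)^{-1}$, which identifies $\mathcal{BF}^{sa}_*(\mathcal H)$ with the set of unitaries $U$ such that $1\notin\sigma(U)$, $-1\notin\sigmaess(U)$, and $\sigmaess(U)$ meets both connected components of $\U\setminus\{\pm1\}$, and then follow the Atiyah--Singer argument: after absorbing the contractible ambiguities via Kuiper's theorem, this space is homotopy equivalent to the loop space $\Omega\mathcal F$ of the space $\mathcal F$ of all Fredholm operators. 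Since $\mathcal F\simeq\Z\times\mathrm{BU}$, Bott periodicity then gives $\pi_1\big(\mathcal{BF}^{sa}_*(\mathcal H)\big)\cong\pi_1(\Omega\mathcal F)\cong\pi_2(\mathcal F)\cong\pi_1\big(U(\infty)\big)\cong\Z$, and one checks that the generator is detected by the spectral flow around a loop.

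The hard part is Step 3. Steps 1 and 2 are routine bookkeeping once the spectral theorem is in hand, whereas Step 3 rests entirely on external, non-trivial inputs — Kuiper's contractibility theorem, the identification of the strongly indefinite component with a loop space of Fredholm operators, and Bott periodicity — and the delicate point is to pin the Cayley-transform model down precisely enough (which unitaries actually occur, and why neither the unitary conjugation nor the ``contractible tail'' affects the homotopy type) for those inputs to be legitimately invoked.
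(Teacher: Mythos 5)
The paper itself offers no proof of this proposition: it is stated as a classical result of Atiyah--Singer, cited to \cite{AS69} with a footnote noting that the original reference treats skew-adjoint operators but that the self-adjoint case ``presents no differences.'' So there is no internal argument for your proposal to be compared against; I can only assess your reconstruction on its own terms.

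Your Steps~1 and~2 are correct and essentially complete. The identification of the three pieces via finiteness of the Morse index and co-index is right, and openness follows, as you say, from the norm (Lipschitz) continuity of the essential spectrum of a self-adjoint element in the Calkin algebra together with the fact that $\sigma_{\mathrm{ess}}(T)$ is compact and bounded away from $0$ when $T$ is Fredholm. The characterisation $\mathcal{BF}^{sa}_+(\mathcal H) = \{P+K : P \textrm{ positive definite invertible},\ K \textrm{ self-adjoint compact}\}$ is correct (your finite-rank spectral projection $\Pi$ has finite rank precisely because $\sigma(T)\cap(-\infty,0]$ is a finite set of eigenvalues of finite multiplicity when $\sigma_{\mathrm{ess}}(T)\subset(0,+\infty)$), and the convexity and the straight-line retraction onto $\Id$ are immediate from this. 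One could also get convexity directly by passing to the Calkin algebra and noting that a convex combination of positive invertible elements stays positive invertible, but your route is fine.

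Step~3 is a correct high-level outline of the Atiyah--Singer argument, and you are candid that it is only an outline. The Cayley-transform model, the use of Kuiper's theorem to absorb contractible unitary conjugations, the identification of the strongly indefinite component (up to homotopy) with a (de)looping of the space of Fredholm operators, and the appeal to Bott periodicity are all the right ingredients, and your final chain $\pi_1 \cong \pi_2(\mathcal F)\cong\pi_1(U(\infty))\cong\Z$ gives the right answer. But as you say yourself, these are major external theorems whose hypotheses you have not verified in this setting, and the delicate homotopy-equivalence of models is asserted rather than proved. In particular, the path-connectedness argument in Step~3 (perturb to make $0$ regular, split off the finite-dimensional piece, homotope to the block model, conjugate by Kuiper) is plausible in outline but needs care: one must keep the path inside $\mathcal{BF}^{sa}_*$ throughout, which requires preserving both tails of the essential spectrum. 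So your proposal is a faithful sketch of the standard proof rather than a proof; since the paper likewise defers entirely to \cite{AS69}, that is a reasonable level of detail here, but it would not stand alone as a self-contained argument.
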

\begin{rem}
By the definitions of the connected components of $\mathcal{BF}^{sa}$, we 
deduce 
that a bounded linear  operator is essentially positive if and only if it is a symmetric 
compact perturbation of a (bounded) positive definite selfadjoint operator. 
Analogous observation hold for essentially negative operators. 
\end{rem}
Even if in the strongly indefinite case the  Morse index as well as the  Morse co-index are meaningless, it is possible to define a sort of relative version, usually called {\em relative Morse index\/}. In order to fix our notation, we need to recall some definitions and basic facts. Our basic references are \cite{Abb01,Kat80}.

Let  $V, W$ be two  closed subspaces of $\mathcal H$ and let $P_V$ (resp. $ P_W$)  denote the orthogonal projection onto $V$ (resp. $W$).  $V, W$ are commensurable if the operator $P_V- P_W$ is compact.
\begin{defn}\label{def:commensurable-pairs}
 Let  $V$ and $W$ be two closed commensurable subspaces of $\mathcal H$. The {\em relative dimension\/} of $W$ with respect to $V$ is the integer 
\begin{equation}\label{eq:commensurable}
\dim(W,V)\= \dim (W \cap V^\perp)- \dim (W^\perp \cap V).	
\end{equation}
\end{defn}
\begin{rem}\label{rem:mi-serve}
	We observe that the spaces $W \cap V^\perp$ and  $W^\perp \cap V$  appearing in the (RHS) of Equation~\eqref{eq:commensurable} are finite dimensional because they are the spaces of fixed points of the compact operators $P_{V^\perp}P_W$ and $P_{W^\perp}P_V$, respectively. (For further details, cfr. \cite[pag.44]{Abb01}).
\end{rem}
\begin{prop}(\cite[Proposition 2.3.2]{Abb01})\label{thm:abbo2.3.2}
Let  $S,T \in \mathcal{BF}^{sa}(\mathcal H)$ be 
such that $S-T$ is a compact. Then the negative (resp. positive) eigenspaces are commensurable.  
\end{prop}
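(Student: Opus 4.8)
The statement is a standard fact of functional calculus, and the plan is to realize the negative (resp.\ positive) spectral projection of $S$ and of $T$ as $g(S)$ and $g(T)$ for one and the same continuous function $g$, and then to exploit that applying a continuous function to two operators that differ by a compact operator yields a compact difference. So fix $S,T\in\mathcal{BF}^{sa}(\mathcal H)$ with $K\=S-T$ compact. Since $S$ and $T$ are bounded selfadjoint Fredholm, by \cite[Lemma 2.1]{Wat15} the point $0$ is either absent from the spectrum or isolated in it; in either case one can pick $\delta>0$ with $\sigma(S)\cap(-\delta,\delta)\subseteq\{0\}$ and $\sigma(T)\cap(-\delta,\delta)\subseteq\{0\}$. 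I would then choose a continuous $g\colon\R\to[0,1]$ with $g\equiv 1$ on $(-\infty,-\delta]$ and $g\equiv 0$ on $[0,+\infty)$. Since $g$ equals $1$ on all of $\sigma(S)\cap(-\infty,0)$, vanishes on $\sigma(S)\cap[0,+\infty)$, and $\sigma(S)$ does not meet $(-\delta,0)$, the continuous functional calculus gives $g(S)=P_{E_-(S)}$, the orthogonal projection onto the negative eigenspace; likewise $g(T)=P_{E_-(T)}$.

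The remaining point is that $g(S)-g(T)$ is compact. First I would dispatch the case of a polynomial $p$: expanding $p(S)-p(T)$ and using $S^k-T^k=\sum_{j=0}^{k-1}S^j(S-T)T^{k-1-j}$ exhibits $p(S)-p(T)$ as a finite sum of bounded operators each carrying the compact factor $K$, hence $p(S)-p(T)\in\mathcal K(\mathcal H)$. For the continuous $g$, I would pick (Stone--Weierstrass) polynomials $p_m\to g$ uniformly on a compact interval containing $\sigma(S)\cup\sigma(T)$; then $p_m(S)\to g(S)$ and $p_m(T)\to g(T)$ in operator norm, so $p_m(S)-p_m(T)\to g(S)-g(T)$ in norm, and since $\mathcal K(\mathcal H)$ is norm-closed the limit $g(S)-g(T)=P_{E_-(S)}-P_{E_-(T)}$ is compact. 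That is precisely the commensurability of $E_-(S)$ and $E_-(T)$.

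For the positive eigenspaces the argument would be identical after replacing $g$ with a continuous $h\colon\R\to[0,1]$ that is $0$ on $(-\infty,0]$ and $1$ on $[\delta,+\infty)$, so that $h(S)=P_{E_+(S)}$, $h(T)=P_{E_+(T)}$, and $P_{E_+(S)}-P_{E_+(T)}=h(S)-h(T)$ is compact by the same approximation. (An equivalent route is via Riesz projections $P_{E_\pm(S)}=\tfrac{1}{2\pi i}\oint_\Gamma(z\Id-S)^{-1}\,\d z$ along a contour $\Gamma$ which, thanks to the common gap $\delta$, isolates simultaneously the relevant part of $\sigma(S)$ and of $\sigma(T)$, together with the resolvent identity $(z\Id-S)^{-1}-(z\Id-T)^{-1}=(z\Id-S)^{-1}K(z\Id-T)^{-1}$ to see the integrand is compact.) The only delicate point is the first step — guaranteeing that $g(S)$ is genuinely the orthogonal projection onto $E_-(S)$ rather than merely an approximation of it — which is exactly where the isolation of $0$ in both spectra and the choice of a common gap $\delta$ are used.
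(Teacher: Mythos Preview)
Your argument is correct. The paper, however, does not supply its own proof of this proposition: it is stated with a bare citation to \cite[Proposition 2.3.2]{Abb01} and then used immediately, so there is nothing to compare your approach against. Your route via the continuous functional calculus---choosing a common spectral gap $\delta$, realizing $P_{E_-(S)}$ and $P_{E_-(T)}$ as $g(S)$ and $g(T)$ for a single continuous $g$, and passing to the limit from the polynomial case to conclude $g(S)-g(T)\in\mathcal K(\mathcal H)$---is a standard and clean way to establish the result. The alternative you sketch with Riesz projections and the resolvent identity is equally valid and is closer in spirit to what one finds in \cite{Abb01}; both hinge on exactly the point you flag, namely that $0$ is isolated in (or absent from) both spectra so that one contour or one cutoff function serves simultaneously for $S$ and for $T$.
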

As direct consequence of Proposition \ref{thm:abbo2.3.2}, 
we are entitled to introduce the following definition.
\begin{defn}\label{def:relativeMorseindex}
We define the {\em relative Morse index\/} of an ordered pair of selfadjoint Fredholm operators  $S, T \in \mathcal{BF}^{sa}(\mathcal H)$ such that $S-T$ 
is compact, as the integer 
\begin{equation}\label{eq:relative-dimension-general}
 \irel(T,S)\= \dim\Big(E_-(S)\cap \big(E_+(T)\oplus E_0(T)\big)\Big)-\dim\Big(E_-(T)\cap \big(E_+(S)\oplus E_0(S)\big)\Big).
\end{equation}
\end{defn}
\begin{rem}
For further details on pairs of commensurable subspaces and Fredholm pairs, we refer the interested reader to \cite[Chapter 2]{Abb01}. Compare with \cite{ZL99} and \cite[Definition 2.1]{HS09}.
\end{rem}
If $S,T\in \mathcal{GL}^{sa}(\mathcal H)$, such that $S-T$ 
is compact then the relative Morse index given in Equation~\eqref{eq:relative-dimension-general}, reduces to   
\begin{equation}\label{eq:relative-dimension-special}
 \irel(T,S)\= \dim\big(E_-(S),E_-(T)\big).
\end{equation}
If $S$ is a compact perturbation of a non-negative definite 
operator $T$, so, it is essentially positive and $
\irel(T,S)=\iMor(S).$ 
\begin{rem}\label{rmk:segno-sf}
It is worth noticing that in the special case of Calkin equivalent positive isomorphisms, authors in \cite{FPR99}, defined the relative Morse index $I(T,S)$. However, their definition agrees with the one given in Definition \ref{def:relativeMorseindex} only up to the sign.  
\end{rem}

We are now in position to introduce the spectral flow. 
Given a  $\mathscr C^1$-path  $L:[a,b]\to\mathcal{BF}^{sa}(\mathcal W, \mathcal 
H)$, the spectral flow of $L$ counts the net number of eigenvalues crossing 0. 
\begin{defn}\label{def:crossing}
An instant $t_0 \in (a,b)$ is called a \emph{crossing instant} (or {\em 
crossing\/} for short) if $\ker  L_{t_0} \neq \{0\}$. The \emph{crossing form} 
at a crossing $t_0$ is the quadratic form defined by 
\[
 \Gamma( L, t_0): \ker  L_{t_0} \to \R, \ \ \Gamma( L, 
t_0)[u] \=\langle 
 \dot{ L}_{t_0} u, u \rangle_{\mathcal H},
\]
where we denoted by $\dot{L}_{t_0}$ the derivative of $L$ 
with respect to the parameter $t \in [a,b]$ at the point $t_0$.
A crossing is called \emph{regular}, if $\Gamma( L, t_0)$ is 
non-degenerate. If $t_0$ is a crossing instant for $L$, we refer to 
$m(t_0)$ the dimension of $\ker  L_{t_0}$.
\end{defn}

\begin{rem}
It is worth noticing that regular crossings are isolated, and hence, on a compact interval are in a finite number. Thus we are entitled to introducing the following definition.
\end{rem}

In the case of regular curve (namely a curve having only regular crossings)  we introduce the following Definition. 
\begin{defn}\label{def:new-spectralflow-def}
 Let  $L:[a,b]\to\mathcal{BF}^{sa}(\mathcal  H)$ be a $\mathscr 
C^1$-path and 
 we assume that it has only regular crossings. Then 
 \begin{equation}\label{eq:spectral-flow-crossings}
\spfl(L; [a,b])= \sum_{t \in (a,b)} \sgn \Gamma(L, t)- 
\iMor\big(\Gamma(L,a)\big)
+ \coiMor\big(\Gamma(L,b)\big),
\end{equation}
where the sum runs over all regular (and hence in a finite number) strictly contained in  $[a,b]$.
\end{defn}
\begin{rem}
It is worth noticing that, it is always possible to perturb the path $L$ for getting a regular path. Moreover, as consequence of the fixed end-points homotopy invariance of the spectral flow, the spectral flows of the original (unperturbed) and the perturbed  paths both coincide.
\end{rem}

\begin{defn}\label{def:positive-paths}
	The $\mathscr C^1$-path $L:[a,b]\ni t \mapsto L_t\in \mathcal{BF}^{sa}(\mathcal  H)$ is termed {\em positive\/} or {\em plus\/} path, if at each crossing instant $t_*$ the crossing form $\Gamma(L, t_*)$ is positive definite.  
\end{defn}
\begin{rem}
We observe that in the case of a positive path, each crossing is regular and in particular the total number of crossing instants on a compact interval is finite. Moreover the local contribution at each crossing to the spectral flow is given by the dimension of the intersection. Thus given a positive path $L$, the spectral flow is given by 
 \[
\spfl(L; [a,b])= \sum_{t \in (a,b)} \dim  \ker L(t)+ \dim  \ker L(b). 
\]
\end{rem}
\begin{defn}\label{def:admissible-paths-operators}
The path $L:[a,b]\to\mathcal{BF}^{sa}(\mathcal  H)$ is termed {\em admissible\/} provided it has invertible endpoints. 
\end{defn}
For paths of bounded self-adjoint Fredholm operators parametrized on $[a,b]$ which are compact perturbation of a fixed operator,  the spectral flow given in Definition \ref{def:new-spectralflow-def}, can be characterized as the relative Morse index of its endpoints. More precisely, the following result holds. 
\begin{prop}\label{thm:spfl-operatori-diff-rel-morse}
Let us consider the  path  $L: [a,b] \to\mathcal{BF}^{sa}(\mathcal  H)$  and we assume that for every $t \in [a,b]$, the operator  $L_t- L_a$ is compact.   Then
\begin{equation}\label{eq:equality-spfl-relmorse}
-\spfl(L; [a,b])=\irel(L_a, L_b).
\end{equation}
Moreover if $L_a$ is essentially positive, then we have 
\begin{equation}\label{eq:diff-Morse}
-\spfl(L; [a,b])
=\iMor (L_b)-\iMor(L_a)
\end{equation}
and if furthermore  $L_b$ is positive definite, then 
\begin{equation}
\spfl(L; [a,b])=\iMor(L_a).
\end{equation}
\end{prop}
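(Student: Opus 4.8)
The plan is to reduce the first identity to a finite-dimensional computation by confining the path to a convex set and invoking the homotopy invariance of the spectral flow, and then to obtain the two ``moreover'' statements as immediate specialisations. First I would use the compactness hypothesis: since $L_t-L_a$ is compact for every $t$, and since a compact perturbation of a Fredholm operator is again Fredholm, the path $L$ is contained in the affine (hence convex) subset $\mathcal{A}\=L_a+\mathcal{K}^{sa}(\mathcal{H})\subseteq\mathcal{BF}^{sa}(\mathcal{H})$, where $\mathcal{K}^{sa}(\mathcal{H})$ denotes the self-adjoint compact operators. In particular the straight-line path $\ell_s\=L_a+\tfrac{s-a}{b-a}(L_b-L_a)$, $s\in[a,b]$, also lies in $\mathcal{A}$, joins $L_a$ to $L_b$, and the affine homotopy $(r,s)\mapsto(1-r)L_s+r\,\ell_s$ stays in $\mathcal{A}$ while keeping both endpoints fixed. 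Since $\irel(L_a,L_b)$ depends only on the ordered pair of endpoints whereas $\spfl$ is invariant under fixed-endpoint homotopies (cf.\ \cite{FPR99,Abb01}), it is enough to prove $-\spfl(\ell;[a,b])=\irel(L_a,L_b)$ for the straight-line path $\ell$.

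To compute $\spfl(\ell;[a,b])$ I would use the standard finite-dimensional reduction. After a small finite-rank self-adjoint perturbation — which stays inside $\mathcal{A}$ and, by homotopy invariance, changes neither side — one may assume $\ell$ has only regular crossings. One then picks a partition $a=r_0<\dots<r_N=b$ and, on each $[r_{i-1},r_i]$, a number $\delta_i>0$ small enough to avoid the essential spectrum of every $\ell_s$ and not equal to an eigenvalue of any such $\ell_s$; this is possible because each $\ell_s$ is Fredholm, so $0$ is either absent from, or an isolated point of finite multiplicity of, $\sigma(\ell_s)$. The spectral projection $\mathbb{P}_i(s)$ of $\ell_s$ onto $(-\delta_i,\delta_i)$ is then a continuous family of \emph{finite-rank} projections; the complementary part $(\Id-\mathbb{P}_i(s))\ell_s$ is invertible throughout $[r_{i-1},r_i]$ and hence contributes nothing, so by the path-additivity of the spectral flow — whose asymmetric endpoint convention from Definition~\ref{def:new-spectralflow-def} glues correctly at the interior breakpoints $r_i$ — one has $\spfl(\ell;[a,b])=\sum_{i=1}^{N}\spfl\big(\mathbb{P}_i(s)\ell_s\vert_{\im\mathbb{P}_i(s)};[r_{i-1},r_i]\big)$. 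This reduces the identity to the finite-dimensional statement that, for a continuous path $s\mapsto M_s$ of symmetric matrices, $-\spfl(M;[a,b])$ equals the integer assigned to $(M_a,M_b)$ by formula~\eqref{eq:relative-dimension-general} — which for invertible endpoints is simply $\iMor(M_b)-\iMor(M_a)$. This last equality is classical: both sides are additive under concatenation and invariant under fixed-endpoint homotopies, so it suffices to check it along one explicit path realising each prescribed jump of the Morse index, the kernels at the endpoints being recorded precisely by the $E_0$-terms in~\eqref{eq:relative-dimension-general}. Unwinding the reduction gives $-\spfl(L;[a,b])=\irel(L_a,L_b)$.

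The two remaining assertions are then immediate. If $L_a$ is essentially positive, then so is every $L_t$, because the essential spectrum is unchanged under compact perturbations; hence all Morse indices along the path are finite and the relative Morse index of $(L_a,L_b)$ collapses to $\iMor(L_b)-\iMor(L_a)$ (see~\eqref{eq:relative-dimension-special} and the discussion following it), so $-\spfl(L;[a,b])=\iMor(L_b)-\iMor(L_a)$; if in addition $L_b$ is positive definite, then $\iMor(L_b)=0$ and $\spfl(L;[a,b])=\iMor(L_a)$.

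The step I expect to be the main obstacle is the finite-dimensional reduction described above: one must verify carefully that the finite-rank compressions $\mathbb{P}_i(s)\ell_s\vert_{\im\mathbb{P}_i(s)}$ carry the whole of the spectral flow and that the asymmetric endpoint convention of Definition~\ref{def:new-spectralflow-def} is exactly compensated by the $E_0$-terms appearing in the relative Morse index~\eqref{eq:relative-dimension-general}; the remaining arguments are routine.
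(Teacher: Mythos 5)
Your proof is correct, but it takes a genuinely different route from the paper's. The paper's proof is short and reference-dependent: it uses the two-parameter homotopy $L(s,t)=L_t+s\varepsilon\Id$ to replace $L$ by the admissible path $t\mapsto L_t+\varepsilon\Id$ without changing the spectral flow (the endpoint spectral flows $s\mapsto L_a+s\varepsilon\Id$ and $s\mapsto L_b+s\varepsilon\Id$ contribute nothing by the positivity of those paths and the endpoint convention of Definition~\ref{def:new-spectralflow-def}), and then simply cites \cite[Prop.~3.3]{FPR99} for the admissible case and \cite[Prop.~3.9]{FPR99} for Equation~\eqref{eq:diff-Morse}, leaving the compatibility $\irel(L_a+\varepsilon\Id,L_b+\varepsilon\Id)=\irel(L_a,L_b)$ implicit (it holds because, for $\varepsilon$ small, $E_-(\cdot+\varepsilon\Id)=E_-(\cdot)$ and $E_+(\cdot+\varepsilon\Id)=E_+(\cdot)\oplus E_0(\cdot)$, which is exactly what the $E_0$-terms in~\eqref{eq:relative-dimension-general} account for). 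You instead avoid the external reference entirely: you replace $L$ by the straight-line path $\ell$ via an affine fixed-endpoint homotopy inside $L_a+\mathcal{K}^{sa}(\mathcal H)$ and then carry out a finite-dimensional reduction by compressing $\ell_s$ onto the ranges of its spectral projections over small symmetric windows around $0$ on a suitable partition. What your approach buys is self-containment; what the paper's approach buys is brevity.

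Two remarks on the points you flag as the ``main obstacle.'' First, when you perturb to achieve regular crossings, be explicit that the perturbation must vanish at $t=a,b$: otherwise not only the spectral flow but also $\irel(L_a,L_b)$ changes, and ``changes neither side'' would need justification rather than being automatic. Second, the reduction to ``the finite-dimensional statement'' is slightly more involved than your phrasing suggests: it is not enough to express $\spfl(\ell;[a,b])$ as a sum of compressed finite-dimensional spectral flows, because $\irel(L_a,L_b)$ does not a priori decompose as a matching sum. The clean way is to pick the partition so that $\ell_{r_i}$ is invertible at the interior breakpoints, use path-additivity of the spectral flow together with additivity of the relative Morse index (\cite[Ch.~2]{Abb01}) to reduce to a single subinterval, and there identify $\irel(\ell_{r_{i-1}},\ell_{r_i})$ with the difference of Morse indices of the finite-rank compressions, using that $(\Id-\mathbb{P}_i(s))\ell_s(\Id-\mathbb{P}_i(s))$ is invertible with $s$-independent essential signature. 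After that the finite-dimensional identity $-\spfl=\iMor(M_b)-\iMor(M_a)=\irel(M_a,M_b)$ is indeed elementary (and one need not invoke a homotopy argument to prove it: a direct telescoping of $t\mapsto\iMor(M_t)$ across the regular crossings, recording the endpoint conventions, gives it at once). With those two adjustments your argument is complete.
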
 
\begin{proof}
The proof of the equality in  Equation~\eqref{eq:equality-spfl-relmorse} is an immediate consequence of the fixed end homotopy properties of the spectral flow. For, let $\varepsilon >0 $ and let us consider the two-parameter family 
\[
L:[0,1]\times [a,b] \to \mathcal{BF}^{sa}(\mathcal  H) \textrm{ defined by } L(s,t)\= L_t+ s \,\varepsilon\, \Id. 
\]
By the homotopy property of the spectral flow, we get that 
\begin{multline}\label{eq:fff}
\spfl(L_t; t \in [a,b])\\	=  \spfl(L_a+s \varepsilon \Id, s \in [0,1]) + \spfl(L_t+\varepsilon\Id, t \in [a,b])- \spfl(L_b + s \varepsilon\Id, s \in [0,1])\\=  \spfl(L_t+\varepsilon\Id, t \in [a,b])
\end{multline}
where the last equality in  Equation~\eqref{eq:fff} is consequence if the positivity of all the involved paths.  By choosing a maybe smaller $\varepsilon>0$ the path  $t\mapsto L_t+\varepsilon\Id$ is admissible (in the sense of Definition  \ref{def:admissible-paths-operators}). The conclusion, now readily follows by applying  \cite[Proposition 3.3]{FPR99} (the minus sign appearing is due to a different choosing convention for the spectral flow. Cfr. Remark \ref{rmk:segno-sf}) to the path $t\mapsto L_t+\varepsilon\Id$. 
In order to prove the second claim, it is enough to observe that if $L_a$ is essentially positive, then $L$ is a  path entirely contained in the (path-connected component) $\mathcal{BF}_+^{sa}(\mathcal  H)$. The proof of the equality in Equation~\eqref{eq:diff-Morse} is now a direct  consequence of Equation the previous argument and \cite[Proposition 3.9]{FPR99}. The last can be deduced by Equation~\eqref{eq:diff-Morse} once observed that $\iMor(L_b)=0$. This concludes the proof. 
\end{proof}
\begin{rem}
	We observe that  a direct proof of Equation~\eqref{eq:diff-Morse} can be easily conceived as direct consequence of the homotopy properties of $\mathcal{BF}_+^{sa}(\mathcal  H)$.
\end{rem}

\begin{rem}
 We observe that the definition of spectral flow for bounded selfadjoint Fredholm operators given in Definition \ref{def:new-spectralflow-def} is slightly different from the standard definition given in literature in which only continuity is required on the regularity of the path.  (Cfr. For further details, we refer the interested reader to 
\cite{Phi96,RS95, Wat15} and 
 references therein). Actually Definition \ref{def:new-spectralflow-def}  represents an efficient way for 
 computing the spectral flow even if it requires more regularity as well as a  transversality assumption 
 (the regularity of each crossing instant). However, it is worth to mentioning that, the spectral flow is a fixed endpoints homotopy invariant and for admissible paths (meaning for paths having invertible endpoints) is a free homotopy invariant. By density arguments, we observe that a $\mathscr C^1$-path 
 always exists  in any fixed endpoints homotopy class of the original path.  
\end{rem}

\begin{rem}
 It is worth noting, as already observed by author in \cite{Wat15}, that the spectral flow can be 
 defined in the more general case of continuous 
 paths of closed unbounded selfadjoint Fredholm operators that are 
 continuous with respect to the (metric) gap-topology (cf. \cite{BLP05} and references 
 therein). However in the special case in 
 which the domain of the operators is fixed, then the closed path of unbounded 
 selfadjoint Fredholm operators can be regarded as a continuous path 
 in $\mathcal{BF}^{sa}(\mathcal W, \mathcal  H)$. Moreover  this path is also continuous 
 with respect to the aforementioned gap-metric topology.
 
 The advantage to regard the paths in  $\mathcal{BF}^{sa}(\mathcal W, \mathcal  H)$ is that the 
 theory is straightforward as in the bounded case and, clearly, it is sufficient for the applications  
 studied in the present manuscript. 
\end{rem}

\subsection{On the Spectral Flow for Fredholm quadratic forms}\label{subsec:flussospettraleforme}
%
{{
Following authors in \cite{MPP05} we are in position to discuss the spectral  flow for bounded Fredholm quadratic forms.  
Let $(\mathcal H, \langle \cdot,\cdot \rangle)$ be a real separable Hilbert space. A continuous function $q : \mathcal H \to  \R$  is called a {\em quadratic form \/} provided that there is a symmetric bilinear form  $b_q : \mathcal H \times \mathcal H \to \R$ such that $q(u) = b_q(u,u)$ for all $u \in \mathcal H$.
 The bilinear form $b_q$ is determined by $q$ through the following polarization identity:
 \[
 b_q(u,v)=\dfrac14\{ q(u+v)-q(u-v)\} \textrm{ for all } u,v \in \mathcal H. 
 \]
This implies, in particular, that the bilinear form $b_q$  is also continuous. Let us denote by $\mathcal Q(\mathcal H)$  the set of all bounded quadratic  forms on $\mathcal H$ and we observe that $\mathcal Q(\mathcal H)$ is a Banach space with the norm defined by
\[ 
\norm{q} \= \sup_{\norm{u}\leq 1} |q(u)| \textrm{ for all }  q \in \mathcal Q(\mathcal H).
\]
\begin{lem}
	Let $q\in \mathcal Q(\mathcal H)$. Then there exists a unique self–adjoint operator $T \in \mathcal B(\mathcal H)$ called the representation of $q$ with respect to $\langle \cdot,\cdot\rangle $ with the property that
	    \begin{equation}\label{eq:2.2}
	     q(u) =\langle T\,u, u \rangle  \textrm{ for all  } u \in  \mathcal H
	     \end{equation}
	     Moreover 
	     \begin{equation}\label{eq:2.3}
	     b_q(u,v)=\langle T\,u, v \rangle  \textrm{ for all  } u, v \in \mathcal H.
	     \end{equation}
	    \end{lem}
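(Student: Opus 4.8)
The plan is to obtain $T$ from the Riesz representation theorem applied to the bounded symmetric bilinear form $b_q$ attached to $q$ by polarization. First I would record that $b_q$ is bounded. Since $q$ is a bounded quadratic form and $q(\lambda u)=\lambda^2 q(u)$, the polarization identity together with the parallelogram law gives $|b_q(u,v)|=\tfrac14|q(u+v)-q(u-v)|\leq\tfrac14\norm{q}\big(\norm{u+v}^2+\norm{u-v}^2\big)=\tfrac12\norm{q}\big(\norm{u}^2+\norm{v}^2\big)$; replacing $(u,v)$ by $(tu,t^{-1}v)$ with $t>0$ and minimizing the right-hand side over $t$ then yields the genuine bilinear bound $|b_q(u,v)|\leq\norm{q}\,\norm{u}\,\norm{v}$. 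In particular, for each fixed $u\in\mathcal H$ the functional $v\mapsto b_q(u,v)$ is a bounded linear form, so by the Riesz representation theorem there is a unique vector, which we denote $Tu$, with $b_q(u,v)=\langle Tu,v\rangle$ for all $v\in\mathcal H$; this is precisely \eqref{eq:2.3}.

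Next I would verify that $u\mapsto Tu$ defines a bounded self-adjoint operator. Linearity of $T$ follows from the bilinearity of $b_q$ and the uniqueness of the Riesz representative: $\langle T(\lambda u_1+u_2),v\rangle=\lambda b_q(u_1,v)+b_q(u_2,v)=\langle \lambda Tu_1+Tu_2,v\rangle$ for every $v$, hence $T(\lambda u_1+u_2)=\lambda Tu_1+Tu_2$. Boundedness is immediate from the estimate above, since $\norm{Tu}=\sup_{\norm{v}\leq 1}|\langle Tu,v\rangle|=\sup_{\norm{v}\leq 1}|b_q(u,v)|\leq\norm{q}\,\norm{u}$, so $T\in\mathcal B(\mathcal H)$. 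Self-adjointness comes from the symmetry of $b_q$ and of the real inner product: $\langle Tu,v\rangle=b_q(u,v)=b_q(v,u)=\langle Tv,u\rangle=\langle u,Tv\rangle$ for all $u,v\in\mathcal H$.

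It then remains to establish \eqref{eq:2.2} and uniqueness. Putting $v=u$ in \eqref{eq:2.3} gives $q(u)=b_q(u,u)=\langle Tu,u\rangle$, which is \eqref{eq:2.2}. Conversely, if $T'\in\mathcal B(\mathcal H)$ also satisfies $q(u)=\langle T'u,u\rangle$ for all $u$, then applying the polarization identity to both $q$ and the quadratic form $u\mapsto\langle T'u,u\rangle$ recovers $b_q(u,v)=\langle T'u,v\rangle$ for all $u,v$, whence $\langle (T-T')u,v\rangle=0$ identically and therefore $T'=T$ by non-degeneracy of the inner product. I expect no genuine obstacle in this argument: the only step that deserves a line of care is the homogenization trick passing from the ``diagonal'' polarization estimate to the bilinear bound $|b_q(u,v)|\leq\norm{q}\,\norm{u}\,\norm{v}$, after which the whole statement reduces to the classical Riesz representation argument.
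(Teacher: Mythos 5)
Your proof is correct and follows essentially the same route as the paper: pass from $q$ to $b_q$ via polarization, apply the Riesz representation theorem fiberwise to get $Tu$, then check linearity, boundedness, and symmetry. You simply supply details the paper leaves implicit — the quantitative bound $|b_q(u,v)|\leq\norm{q}\,\norm{u}\,\norm{v}$ via the homogenization trick, and the uniqueness argument via polarization.
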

\begin{proof}
We start to observe that, from the polarization identity, it follows immediately that an operator satisfies Equation~\eqref{eq:2.2} if and only if it satisfies Equation 
	\eqref{eq:2.3}. Fix $u \in \mathcal H$. Since $b_q$ is continuous and bilinear, then the map  $v \mapsto b_q(u,v)$ is linear and continuous on $\mathcal H$. By the Riesz-Fréchét Representation Theorem, there is an element that we denote by $Tu$ having the property that $b_q(u,v)=\langle T\,u, v \rangle$ for all $v \in \mathcal H$. Since $b_q$ is symmetric, bilinear and continuous we conclude that the operator $T$ belongs to $\mathcal B(\mathcal H)$ and it is symmetric with respect to $ \langle \cdot,\cdot \rangle$.
 \end{proof}
 A quadratic form $q \in \mathcal Q(\mathcal H)$ is called {\em non-degenerate\/} provided that 
\[
b_q(u,v)=0 \textrm{ for all } v \in H \Rightarrow u=0.
\]
 \begin{defn}\label{def:Fqf}
 Under above notation, a quadratic form $q:\mathcal H\to \R$ is termed a \emph{Fredholm quadratic form\/}  if the operator $T$  given in Equation~\eqref{eq:2.2} (representing $q$ with respect to the scalar product of $\mathcal H$) is Fredholm. 
\end{defn}

\begin{rem}
The set $\mathcal{Q_F}(\mathcal H)$ is an open subset of $\mathcal Q(\mathcal H)$. We observe that the operator representing of a non-degenerate quadratic form has trivial kernel and therefore if the form is also Fredholm then such a representation belongs to $\GL(\mathcal H)$.
\end{rem}
\begin{lem}\label{thm:stable-perturbation}
	A quadratic form on $\mathcal H$ is weakly continuous if and only  its representation is a compact operator in $\mathcal H$.
\end{lem}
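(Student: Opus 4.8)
The plan is to establish the two implications separately: the ``if'' direction is a routine weak--continuity estimate, while the ``only if'' direction will need the spectral theorem for bounded self--adjoint operators.

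First I would prove that if the representation $T$ of $q$ (so that $q(u)=\langle Tu,u\rangle$) is compact, then $q$ is weakly (sequentially) continuous. Let $u_n\rightharpoonup u$ in $\mathcal H$; weakly convergent sequences are bounded, say $\norm{u_n}\le C$, and compactness of $T$ gives $Tu_n\to Tu$ in norm. The identity
\[
q(u_n)-q(u)=\langle Tu_n-Tu,\, u_n\rangle+\langle Tu,\, u_n-u\rangle
\]
then yields $\abs{q(u_n)-q(u)}\le C\,\norm{Tu_n-Tu}+\abs{\langle Tu,u_n-u\rangle}\to 0$, the first summand by norm convergence of $Tu_n$ and the second by weak convergence of $u_n$. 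Since $\mathcal H$ is separable, the weak topology is metrizable on bounded sets, so this sequential statement is the pertinent one.

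For the converse I would argue by contraposition. Writing $E$ for the spectral measure of the bounded self--adjoint operator $T$, non-compactness of $T$ is equivalent to the existence of $\varepsilon>0$ for which the spectral projector $E(\{\lambda:\abs{\lambda}\ge\varepsilon\})$ has infinite rank; since this projector is the orthogonal sum $E([\varepsilon,+\infty))+E((-\infty,-\varepsilon])$, at least one of the summands has infinite--dimensional range, and replacing $q$ by $-q$ if necessary (which replaces $T$ by $-T$ and does not affect weak continuity) we may assume it is $E_+\=E([\varepsilon,+\infty))$. On the infinite--dimensional closed subspace $\mathcal H_+\=\im E_+$ one has $\langle Tv,v\rangle\ge\varepsilon\norm{v}^2$, so choosing an orthonormal sequence $(e_k)_{k\in\N}$ inside $\mathcal H_+$ we get $e_k\rightharpoonup 0$ by Bessel's inequality, while $q(e_k)=\langle Te_k,e_k\rangle\ge\varepsilon$ for every $k$, hence $q(e_k)\not\to 0=q(0)$. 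This contradicts the weak continuity of $q$, and therefore $T$ must be compact.

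The only genuinely delicate point is the converse, and within it the one thing to watch is the separation of the spectrum near $0$ into its positive and negative parts: infinite rank of $E(\{\abs{\lambda}\ge\varepsilon\})$ by itself is not enough, because on a mixed--sign spectral subspace the form $v\mapsto\langle Tv,v\rangle$ may vanish on an infinite--dimensional subspace, so one really needs that one of the two one--sided spectral projectors has infinite rank. An alternative to the spectral argument would be to invoke the characterization of compact operators on reflexive Banach spaces as the completely continuous ones (those mapping weakly convergent sequences to norm convergent ones), but the route above is self--contained and shorter.
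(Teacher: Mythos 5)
Your proof is correct and self--contained. Note, however, that the paper does not actually present a proof of this lemma: it defers to \cite[Appendix B]{BJP14}, so a line-by-line comparison is not possible here. That said, your argument is the standard one and almost certainly the same as what the cited appendix contains: the forward direction is the elementary estimate using that compact operators turn weak convergence into norm convergence (together with uniform boundedness of weakly convergent sequences), and the converse uses the spectral theorem for bounded self--adjoint operators together with the characterization that $T$ is compact if and only if every spectral projector $E(\{\abs{\lambda}\ge\varepsilon\})$ has finite rank.

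Two small remarks, neither of which affects correctness. First, the reduction ``replace $q$ by $-q$'' is not really needed: once you know that one of the one-sided projectors $E([\varepsilon,\infty))$ or $E((-\infty,-\varepsilon])$ has infinite rank, any orthonormal sequence $(e_k)$ in its range satisfies $\abs{q(e_k)}\ge\varepsilon$ while $e_k\rightharpoonup 0$, and that already contradicts weak continuity at $0$; working with the absolute value is cleaner than normalizing the sign. Second, you are right to flag the meaning of ``weakly continuous''. The only sensible reading here is sequential weak continuity (equivalently, continuity from weak to norm on bounded sets), and your metrizability remark supports that; topological weak continuity on all of $\mathcal H$ would force $q$ to be bounded on a finite-codimensional subspace, hence essentially finite rank, which is strictly stronger than compactness. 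You also correctly identify and handle the one genuine subtlety, namely that $E(\{\abs{\lambda}\ge\varepsilon\})$ having infinite rank is not by itself enough — one must pass to a one-sided spectral subspace so that the form has a definite sign there.
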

\begin{proof}
For the proof, we refer the interested reader, to \cite[Appendix B]{BJP14}. 
\end{proof}
We define a quadratic form $q : \mathcal H \to \R$ to be positive definite provided that, there exists $c>0$ such that 
\[
q(u) \geq c \norm{u}^2, \textrm{ for all } u \in \mathcal H.
\]
We are now entitled to  introduce the following definition which will be crucial in the whole paper. 
\begin{defn}\label{def:essentially-positive}
	A quadratic form $q : \mathcal H \to \R$ is termed {\em essentially positive\/} and we write $q\in \mathcal{Q_F}^+(\mathcal H)$, provided it is a weakly continuous 
perturbation of a positive definite quadratic (Fredholm) form.\end{defn}
It turns out that if $q\in 
\mathcal{Q_F}^+(\mathcal H)$ then its representation is and essentially  positive  selfadjoint Fredholm operator. 
\begin{rem} 
We observe that as consequence of Proposition \ref {thm:as69}, the set $\mathcal{Q_F}^+(\mathcal H)$ 	is contractible.
\end{rem}
\begin{defn}\label{def:Morse-essentially-positive}
	The {\em Morse index\/}, denoted by $\iMor(q)$, of an essentially positive Fredholm quadratic form $q: \mathcal H \to \R$ is defined as the Morse index of its representation. Thus, in symbol
	\[
	\iMor(q)= \dim E_-(T)
	\]
	where $T$ is the representation of $q$ and $E_-(T)$ denotes the negative spectral space of $T$.
\end{defn}

We are now in position to introduce the notion of {\em spectral flow\/} for (continuous) path of Fredholm quadratic forms. 
\begin{defn}\label{def:sfquadratic}
Let $q:[a,b]\rightarrow \mathcal {Q_F}(\mathcal H)$ be a continuous   path.
We define  the \emph{spectral flow of $q$}  as the  spectral 
flow of the continuous path induced by its representation; namely
\[
\spfl(q(t); t \in[a,b]) \= \spfl(L_t; t \in [a,b])
\]
where we denoted by $L_t $ the representation of q(t),  namely   $q(t)= \langle L_t \cdot, \cdot \rangle_{\mathcal H}$.
\end{defn}
Following authors in \cite{MPP05}, if a path $q : [a, b] \to \mathcal {Q_F}(\mathcal H)$ is differentiable at $t$ then the derivative $\dot q(t)$ is also a quadratic form. We shall say that the instant $t$ is a crossing point if $\ker b_q(t)\neq \{0\}$, and We shall say that the crossing instant $t$ is regular if the crossing form $\Gamma(q, t)$, defined as the restriction of the derivative $\dot q(t)$ to the subspace $\ker b_q(t)$, is non-degenerate. We observe that regular crossing points are isolated and that the property of having only regular crossing forms is generic for paths in $\mathcal {Q_F}(\mathcal H)$. 
As direct consequence of Definition \ref{def:sfquadratic} and Definition \ref{def:new-spectralflow-def}, if $q:[a,b] \to \mathcal {Q_F}(\mathcal H)$ is a $\mathscr C^1$-path of Fredholm quadratic forms having only regular crossings,  we infer that  
\begin{equation}\label{eq:sf-forme-crossings}
	\spfl(q(t); t \in[a,b])=\sum_{t \in (a,b)} \sgn \big(\Gamma(q(t))\big)- 
\iMor\big(\Gamma(q(a))\big)
+ \coiMor\big(\Gamma(q(b))\big) 
\end{equation} 
where the sum runs over all regular (and hence in a finite number) strictly contained in  $[a,b]$.
\begin{defn}\label{def:admissible-path-forms}
A path $q :[a, b] \to \mathcal {Q_F}(\mathcal H)$ is called {\em admissible\/} provided that it has non-degenerate endpoints.
\end{defn}
Thus by Definition \ref{def:admissible-path-forms} and Equation~\eqref{eq:sf-forme-crossings}, we get that for an admissible $\mathscr C^1$-path in  $\mathcal {Q_F}(\mathcal H)$ having only regular crossings, the spectral reduces to 
\begin{equation}\label{eq:sf-forme-crossings-admi}
	\spfl(q(t); t \in[a,b])=\sum_{t \in (a,b)} \sgn \big(\Gamma(q(t))\big)
\end{equation}
where the sum runs over all regular (and hence in a finite number) strictly contained in  $[a,b]$.
(Cfr. \cite[Theorem 4.1]{FPR99}). 
\begin{prop}\label{thm:sf-differenza-morse}
Let $q:[a,b] \to 	 \mathcal {Q_F}(\mathcal H)$ be a path of essentially positive Fredholm quadratic forms. Then 
\begin{equation} \label{eq:sf-diffMorse}
\spfl(q(t);t\in[a,b]) = \iMor\big(q(a)\big)-\iMor\big(q(b)\big).
\end{equation}
Furthermore, if $q(b)$ is positive definite, then 
$\spfl(q(t);t \in [a,b]) = \iMor(q(a))$. 
\end{prop}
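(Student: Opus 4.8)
The plan is to pass to the representing operators and then exploit that the essentially positive stratum is contractible. Writing $q(t)=\langle L_t\cdot,\cdot\rangle_{\mathcal H}$ for the representing operator, Definition \ref{def:sfquadratic} gives $\spfl(q(t);t\in[a,b])=\spfl(L_t;t\in[a,b])$, where $t\mapsto L_t$ is a continuous path in the essentially positive component $\mathcal{BF}^{sa}_+(\mathcal H)$, and Definition \ref{def:Morse-essentially-positive} gives $\iMor(q(t))=\iMor(L_t)$, a finite integer since $L_t$ is essentially positive. By Proposition \ref{thm:as69} the space $\mathcal{BF}^{sa}_+(\mathcal H)$ is contractible, hence path-connected and simply connected; therefore any two paths lying in it and sharing the same endpoints are homotopic relative to their endpoints \emph{inside} $\mathcal{BF}^{sa}_+(\mathcal H)$, and since the spectral flow is a fixed-endpoints homotopy invariant, the integer $\Phi(S,T)$, defined as the spectral flow of an arbitrary path in $\mathcal{BF}^{sa}_+(\mathcal H)$ from $S$ to $T$, is well defined. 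From path-additivity of the spectral flow one has $\Phi(S,T)+\Phi(T,U)=\Phi(S,U)$ and $\Phi(S,S)=0$, so it will suffice to establish $\Phi(S,T)=\iMor(S)-\iMor(T)$ for all $S,T\in\mathcal{BF}^{sa}_+(\mathcal H)$.

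To compute $\Phi$ I would join $S$ and $T$ to positive definite operators by explicit affine paths. For $S\in\mathcal{BF}^{sa}_+(\mathcal H)$ set $P_S\=S+(1+\norm{S})\Id\geq\Id$, which is positive definite and still essentially positive, and consider the $\mathscr C^1$-path $\lambda\mapsto S+\lambda(1+\norm{S})\Id$, $\lambda\in[0,1]$, from $S$ to $P_S$. Its crossing instants are those $\lambda$ with $0\in\sigma\big(S+\lambda(1+\norm{S})\Id\big)$, i.e.\ those produced by the eigenvalues of $S$ in $(-\infty,0]$; since $\sigma_{\mathrm{ess}}(S)\subset(0,+\infty)$, below $0$ the spectrum of $S$ is discrete, bounded below and does not accumulate at $0$, so there are only finitely many crossings, all regular with crossing form equal to the restriction of the positive definite operator $(1+\norm{S})\Id$ to the relevant kernel. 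A crossing possibly occurring at $\lambda=0$ (when $0\in\sigma(S)$) sits at the left endpoint and, its crossing form being positive definite, contributes $-\iMor(\Gamma)=0$ to formula \eqref{eq:spectral-flow-crossings}; the right endpoint $P_S\geq\Id$ is non-degenerate and contributes $0$ as well. Hence \eqref{eq:spectral-flow-crossings} yields $\Phi(S,P_S)=\sum_{0<\lambda<1}\sgn\Gamma=\dim E_-(S)=\iMor(S)$. Finally, for any two positive definite operators $P,P'$ in $\mathcal{BF}^{sa}_+(\mathcal H)$ the straight-line path $(1-\lambda)P+\lambda P'$ is positive definite throughout, has no crossings and invertible endpoints, so $\Phi(P,P')=0$. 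Combining these, $\Phi(S,T)=\Phi(S,P_S)+\Phi(P_S,P_T)+\Phi(P_T,T)=\iMor(S)+0-\iMor(T)$, which is exactly \eqref{eq:sf-diffMorse} after translating back to $q$. The last assertion is immediate: if $q(b)$ is positive definite then its representation is a positive definite operator, so $\iMor(q(b))=0$ and $\spfl(q(t);t\in[a,b])=\iMor(q(a))$. Alternatively, once the endpoints are non-degenerate the identity follows from \cite[Proposition 3.9]{FPR99}, exactly as in Proposition \ref{thm:spfl-operatori-diff-rel-morse}.

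The main obstacle, such as it is, lies in the first paragraph: one has to be sure that the homotopies witnessing ``$\spfl$ depends only on the endpoints'' can be carried out within $\mathcal{BF}^{sa}_+(\mathcal H)$, which is precisely what contractibility of this component provides, and that essential positivity forces the Morse indices to be finite so that every quantity above is meaningful. The affine-path computation itself is elementary, its only delicate point being the finiteness of the set of crossings, which again rests on $\sigma_{\mathrm{ess}}(S)\subset(0,+\infty)$.
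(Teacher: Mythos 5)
Your proof is correct and follows a genuinely different route than the paper's. The paper simply invokes Proposition \ref{thm:spfl-operatori-diff-rel-morse} applied to the path of representing operators $T_t$; that proposition, however, carries the hypothesis that $T_t-T_a$ be compact for every $t$, which is not an automatic consequence of essential positivity (each $T_t$ is a compact perturbation of \emph{some} positive definite operator, but the positive part may vary with $t$, so the differences $T_t-T_a$ need not be compact). Your argument sidesteps this by exploiting the contractibility of $\mathcal{BF}^{sa}_+(\mathcal H)$: simple connectivity makes the spectral flow inside this component depend only on the endpoints, so you may replace the given path by an explicit affine path $\lambda\mapsto S+\lambda(1+\norm{S})\Id$ to a positive definite operator, on which the crossing formula \eqref{eq:spectral-flow-crossings} is elementary to evaluate. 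The key points — finiteness of the negative spectrum of an essentially positive Fredholm operator, positive definiteness of every crossing form (being the restriction of $(1+\norm{S})\Id$), the resulting count $\Phi(S,P_S)=\iMor(S)$, and the vanishing $\Phi(P,P')=0$ on the convex cone of positive definite operators — are handled correctly, and the concatenation identity together with $\Phi(P_T,T)=-\Phi(T,P_T)$ (from path additivity and $\Phi(S,S)=0$) yields $\iMor(S)-\iMor(T)$. Your route buys a proof valid for an arbitrary continuous path of essentially positive Fredholm forms, with no compact-perturbation assumption; the paper's route buys brevity at the cost of tacitly narrowing the hypotheses.
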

\begin{proof}
Let $T:[a,b] \to \mathcal{BF}^{sa}( \mathcal  H)$ be the path of selfadjoint Fredholm operators that represents the path of Fredholm  quadratic forms $q$. As already observed, for each $t \in [a,b]$, each  $T(t)$ is an essentially positive Fredholm operator. The result follows by Proposition \ref{thm:spfl-operatori-diff-rel-morse} to the path $t \mapsto T_t$. \\
The second claim follows readily by observing that if $q(b)$ is essentially positive and non-degenerate, then it is positive and hence the Morse index of its representation vanishes.
This concludes the proof.
\end{proof}
\begin{defn}\label{def:generalized-Fredholm}
A {\em generalized   family of Fredholm quadratic forms\/}
parameterized by an interval is a smooth function   $q\colon
{\mathcal H}\to \R ,$ where ${\mathcal H}$ is a Hilbert bundle
over $[a,b]$ and $q$ is such that its restriction $q_t$  to the
fiber ${\mathcal H}_t$  over $t$ is a Fredholm quadratic form. If
$q_a$ and $q_b$ are non  degenerate, we define the spectral flow
$\spfl(q) = \spfl (q,[a,b])$  of such a family $q$ by choosing a
trivialization $$M \colon [a,b] \times {\mathcal H}_a \to {\mathcal
H}$$ and defining
\begin{equation} \label{sflow2}
\spfl(q) = \spfl (\tilde{q},[a,b])
\end{equation}
where $\tilde{q}(t)u =q_t (M_tu).$
\end{defn}
It follows from cogredience property that the right hand side of Equation~\eqref{sflow2} is independent of the choice of the trivialization.
Moreover all of the above properties hold true in this more
general case, including the calculation of the spectral flow
through a non degenerate crossing point,  provided we substitute the usual derivative with
the intrinsic derivative of a bundle map.


\vspace{1cm}
\noindent
\textsc{Prof. Alessandro Portaluri}\\
DISAFA\\
Università degli Studi di Torino\\
Largo Paolo Braccini 2 \\
10095 Grugliasco, Torino\\
Italy\\
Website: \url{aportaluri.wordpress.com}\\
E-mail: \email{alessandro.portaluri@unito.it}

\vspace{1cm}
\noindent
\textsc{Prof. Li Wu}\\
Department of Mathematics\\
Shandong University\\
Jinan, Shandong, 250100\\
The People's Republic of China \\
China\\
E-mail:\email{nankai.wuli@gmail.com}

\vspace{1cm}
\noindent
\textsc{Dr. Ran Yang}\\
School of Science\\
East China  University of Technology\\
Nanchang, Jiangxi, 330013\\
The People's Republic of China \\
China\\
E-mail:\email{yangran2019@ecit.cn}
\end{document}